  \theoremstyle{plain}
    \newtheorem{thm}{Theorem}[section]
    \newtheorem{prop}[thm]{Proposition}
    \newtheorem{subsec}[thm]{}
\theoremstyle{definition}
    \newtheorem{defn}[thm]{Definition}
        \newtheorem{remark}[thm]{Remark}
\theoremstyle{remark}
\newcommand{\twopartdef}[4]
{
	\left\{
		\begin{array}{ll}
			#1 & \mbox{on } #2 \\
			#3 & \mbox{otherwise} #4
		\end{array}
	\right.
}
\title{}
\author{}
\date{}
\begin{document}
\title[Associative Rota-Baxter algebras]{The $L_\infty$-deformations of associative Rota-Baxter algebras and homotopy Rota-Baxter operators}

\author{Apurba Das}
\address{Department of Mathematics and Statistics,
Indian Institute of Technology, Kanpur 208016, Uttar Pradesh, India.}
\email{apurbadas348@gmail.com}

\author{Satyendra Kumar Mishra}\footnote{Corresponding Author email [A2]: satyamsr10@gmail.com}
\address{Statistics and Mathematics Unit, Indian Statistical Institute Bangalore Centre, Bangalore-560059, Karnataka, India.}
\email{satyamsr10@gmail.com}


\subjclass[2010]{$16$E$40$, $16$S$80$, $16$T$25$, $16$W$99$}
\keywords{(Relative) Rota-Baxter algebra, $L_\infty$-algebra, Maurer-Cartan element, Deformation, $A_\infty$-algebra.}

\begin{abstract}
A relative Rota-Baxter algebra is a triple $(A, M, T)$ consisting of an algebra $A$, an $A$-bimodule $M$, and a relative Rota-Baxter operator $T$. Using Voronov's derived bracket and a recent work of Lazarev et al., we construct an $L_\infty [1]$-algebra whose Maurer-Cartan elements are precisely relative Rota-Baxter algebras. By a standard twisting, we define a new $L_\infty [1]$-algebra that controls Maurer-Cartan deformations of a relative Rota-Baxter algebra $(A,M,T)$. We introduce the cohomology of a relative Rota-Baxter algebra $(A, M, T)$ and study infinitesimal deformations in terms of this cohomology (in low dimensions). As an application, we deduce cohomology of coboundary skew-symmetric infinitesimal bialgebras and discuss their infinitesimal deformations. Finally, we define homotopy relative Rota-Baxter operators and find their relationship with homotopy dendriform algebras and homotopy pre-Lie algebras.

\end{abstract}

\maketitle

\noindent

\thispagestyle{empty}

\tableofcontents

\vspace{0.2cm}

\section{Introduction}
Rota-Baxter operators are an algebraic abstraction of integral operators. They first appeared in the fluctuation theory in probability \cite{baxter}. Subsequently, Rota \cite{rota} and Cartier \cite{cart} studied these operators from combinatorial aspects. More precisely, a Rota-Baxter operator on an associative algebra $A$ is a linear map $R : A \rightarrow A$ satisfying 
\begin{align*}
R(a) R(b) = R ( R(a) b + a R(b)), ~ \text{ for } a, b \in A.
\end{align*}
In the last twenty years, several important applications of Rota-Baxter operators were found in the algebraic approach of the renormalization in quantum field theory \cite{conn}, splitting of algebras \cite{aguiar-pre}, Yang-Baxter solutions \cite{aguiar,aguiar-pre}, quasi-symmetric functions \cite{guo-quasi}, and double Lie algebras \cite{double-algebra}. See \cite{guo-book} for more details about Rota-Baxter operators. As a variation of a noncommutative Poisson structure, Uchino  \cite{uchino} defined a generalization of a Rota-Baxter operator in the presence of a bimodule over the algebra. Such operators are called generalized Rota-Baxter operators or relative Rota-Baxter operators (also called $\mathcal{O}$-operators). In this paper, we stick with the term relative Rota-Baxter operator.

\medskip

Rota-Baxter operators and their relative version were also defined on Lie algebras by Kupershmidt \cite{kuper} as an operator analog of classical $r$-matrices. In \cite{laza-rota}, the authors construct an $L_\infty$-algebra which controls the deformation of a relative Rota-Baxter algebra. Subsequently, they find the connection between this $L_\infty$-algebra and the graded Lie algebra obtained in \cite{tang} that governs the deformation of the relative Rota-Baxter operator.

\medskip

Our aim in this paper is to follow the approach of \cite{laza-rota} to study deformations of relative Rota-Baxter (associative) algebras via $L_\infty$-algebras. During the course, we also study deformations of an $\mathsf{AssBimod}$-pair $(A, M)$ consisting of an associative algebra $A$ and an $A$-bimodule $M$. We construct a graded Lie algebra whose  Maurer-Cartan elements are given by $\mathsf{AssBimod}$-pairs. This characterization allows us to construct a differential graded Lie algebra controlling deformations of an $\mathsf{AssBimod}$-pair.

\medskip

Next, using the higher derived brackets of Voronov \cite{voro,voro2}, we construct an $L_\infty$-algebra whose Maurer-Cartan elements are given by relative Rota-Baxter algebras. With this characterization and the well-known construction \cite{getzler} of an $L_\infty$-algebra twisted by a Maurer-Cartan element, we construct a new $L_\infty$-algebra that controls Maurer-Cartan deformations of a relative Rota-Baxter algebra. This new $L_\infty$-algebra also allows us to define the cohomology of a relative Rota-Baxter algebra. Finally, we obtain a long exact sequence connecting the cohomology of the relative Rota-Baxter algebra $(A, M, T)$, cohomology of the underlying $\mathsf{AssBimod}$-pair $(A, M)$ and the cohomology of the relative Rota-Baxter operator $T$ introduced in \cite{das2}.

\medskip

The formal deformation theory of algebraic structures was first developed for associative algebras in the classical work of Gerstenhaber \cite{gers}  and subsequently extended to Lie algebras by Nijenhuis and Richardson \cite{nij-ric}. In \cite{bala}, Balavoine extended the formal deformation theory to algebras over binary quadratic operads. The more general notion of ${\sf R}$-deformations over a local Artinian ring (or a complete local ring) ${\sf R}$ has been described in \cite{doubek-def,fial,kont-soi}. We apply this more general deformation theory to relative Rota-Baxter algebras. We give special attention to infinitesimal deformations of a relative Rota-Baxter algebra and show that there is a one-to-one correspondence between equivalent infinitesimal deformations and elements in the second cohomology space of a relative Rota-Baxter algebra. 
\medskip

Let $A$ be a finite dimemsional associative algebra. An $r$-matrix on $A$ is an element in $A\otimes A$ such that it satisfies associative Yang-Baxter equation on $A$ (\cite{aguiar, aguiar-bial, bai1 }). The relative Rota-Baxter operators on an associative algebra $A$ with respect to coadjoint bimodule $A^*$ are in bijective correspondence with skew-symmetric $r$-matrices on $A$ (see \cite{bai1}). Any skew symmetric $r$-matrix gives a coboundary skew-symmetric  infinitesimal bialgebra \cite{bai1} structure $(A,\Delta_r)$. We define a cochain complex $(C^\bullet(A,\Delta_r),\delta_{inf})$ for coboundary skew-symmetric infinitesimal bialgebras coming from a skew-symmetric $r$-matrix and denote the associated cohomology by $H^\bullet_{inf}(A,\Delta_r)$. We show that there is a long exact sequence of cyclic cohomology spaces of $A^*$, cohomology spaces $H^\bullet_{inf}(A,\Delta_r)$ and the Hochschild cohomology spaces of $A$. Later on, we discuss the infinitesimal deformations in terms of the second cohomology space $H^2_{inf}(A,\Delta_r)$. 

\medskip

In the sequel, we define homotopy relative Rota-Baxter operators on bimodules over $A_\infty [1]$-algebras. More precisely, given an $A_\infty [1]$-algebra and a bimodule over it, we construct an $L_\infty [1]$-algebra using Voronov's construction. This $L_\infty [1]$-algebra is a graded version of a graded Lie algebra constructed in \cite{das2}. We define a homotopy relative Rota-Baxter operator $T$ as a Maurer-Cartan element in the above $L_\infty [1]$-algebra. We also explicitly describe the identities satisfied by the components of $T$. This notion generalizes the strict homotopy relative Rota-Baxter operator defined in \cite{das1}. It is known that a strict homotopy relative Rota-Baxter operator induces a $Dend_\infty [1]$-algebra structure. In \cite{laza-rota}, the authors define homotopy relative Rota-Baxter operators on modules over $L_\infty[1]$-algebras. We show that our definition is suitable compatible with that of \cite{laza-rota}.

\medskip

Finally, we construct a $pre\text{-}Lie_\infty [1]$-algebra associated to a $Dend_\infty  [1]$-algebra, which  generalizes the non-homotopic case \cite{aguiar-pre}. The relationship among dendriform algebras, pre-Lie algebras, associative algebras, and Lie algebras is depicted by the following commutative diagram (see \cite{Chapoton} for details)
\begin{equation}\label{d1}
\begin{CD}
 \text{dendriform algebra} @>>> \text{pre-Lie algebras}   \\
 @VVV @VVV\\
 \text{associative algebras} @>>> \text{Lie algebras}.
\end{CD}
\end{equation}
Moreover, there is a relationship between relative Rota-Baxter algebras, relative Rota-Baxter Lie algebras, dendriform algebras, and pre-Lie algebras that is given by the following commutative diagram (see \cite{das2, tang}) 
\begin{equation}\label{d2}
\begin{CD}
 \text{ relative Rota-Baxter algebras} @>>> \text{dendriform algebras} \\
 @VVV @VVV\\
 \text{ relative Rota-Baxter Lie algebras} @>>> \text{pre-Lie algebra}.
\end{CD}
\end{equation}
We extend both these relationships \eqref{d1} and \eqref{d2} to the homotopy context.
\medskip

This paper is organized as follows. In Section \ref{sec-2}, we recall some basics on relative Rota-Baxter algebras and Voronov's derived bracket construction of $L_\infty$-algebras. Maurer-Cartan characterization of $\mathsf{AssBimod}$-pairs and their cohomology are studied in Section \ref{sec-3}. In the next Section (Section \ref{sec-4}), we construct the $L_\infty$-algebra that controls Maurer-Cartan deformations of a relative Rota-Baxter algebra. Using this, we also define the cohomology of a relative Rota-Baxter algebra. We discuss deformations of a relative Rota-Baxter algebra in Section \ref{sec-5}. In Section \ref{sec-6}, we study cohomology and deformations of coboundary skew-symmetric infinitesimal bialgebra. In the last section, we introduce homotopy relative Rota-Baxter operators and discuss their relationship with homotopy dendriform and homotopy pre-Lie algebras.

\medskip

All (graded) vector spaces, linear maps and tensor products are over a field $\mathbb{K}$ of characteristic $0$. We denote set of all permutations on $k$ elements by $S_k$. A permutation $\sigma \in S_k$ is called an $(i, k-i)$-shuffle if $\sigma(1) < \cdots < \sigma(i)$ and $\sigma (i+1) < \cdots < \sigma (k)$. The set of all $(i, k-i)$-shuffles in $S_k$ is denoted by $S_{(i, k-i)}$. For any permutation $\sigma \in S_k$ and homogeneous elements $v_1, \ldots, v_k$ in a graded vector space $V$, the {\em Koszul sign} $\epsilon (\sigma) = \epsilon (\sigma; v_1, \ldots, v_k ) $ is given by
\begin{align*}
v_{\sigma (1)} \odot \cdots \odot v_{\sigma(k)} = \epsilon (\sigma )~ v_{1} \odot \cdots \odot v_{k},
\end{align*}
where $\odot$ denotes the product in the reduced symmetric algebra $\overline{S}(V)$ of $V$.

\medskip
\section{Preliminaries}\label{sec-2}
In this section, we first recall the Hochschild cohomology \cite{gers} and cyclic cohomology \cite{conn94, loday-cyclic} of associative algebras, and the construction of a graded Lie algebra from \cite{das2}, whose Maurer-Cartan elements are relative Rota Baxter operators. Then, we recall the notion of $V$-data and induced $L_{\infty}$-algebras via higher derived brackets \cite{voro}. 

\subsection{Hochschild and cyclic cohomology of associative algebras}
Let $A=(A,\mu)$ be an associative algebra. We denote the multiplication $\mu:A\otimes A\rightarrow A$ simply by $\mu(a,b)=ab$ for $a,b \in A$. An $A$-bimodule is a vector space $M$ together with two bilinear maps $l:A\otimes M \rightarrow M,~(a,m)\mapsto am$ and $r:M\otimes A \rightarrow M,~(m,a)\mapsto ma$ (called left and right $A$-actions, respectively) satisfying the following identities 
$$(ab)m=a(bm),\quad (ma)b=m(ab),\quad\mbox{and }~~(am)b=a(mb), \quad\mbox{for all  } a,b\in A,~ m\in M.$$
The associative algebra $A$ itself is an $A$-bimodule, where algebra multiplication in $A$ gives the left and right actions on itself. This $A$-bimodule structure on $A$ is called the adjoint $A$-bimodule.

For any vector space $A$, consider the graded vector space $\bigoplus_{n \geq 0} \mathrm{Hom} ( A^{\otimes n+1}, A)$ of multilinear maps. There is a graded Lie bracket, called the {\em Gerstenhaber bracket} \cite{gers} on $\bigoplus_{n \geq 0} \mathrm{Hom} ( A^{\otimes n+1}, A)$ given by $$[f, g] := f \diamond g - (-1)^{mn} g \diamond f, \text{ where }$$
\begin{align*}
(f \diamond g) ( a_1, \ldots, a_{m+n+1}) := \sum_{i=1}^{m+1} (-1)^{(i-1)n} f (a_1, \ldots, a_{i-1}, g ( a_i, \ldots, a_{i+n}), a_{i+n+1}, \ldots, a_{m+n+1}),
\end{align*}
for $f \in \mathrm{Hom}(A^{\otimes m+1}, A)$ and $g \in \mathrm{Hom}(A^{\otimes n+1}, A)$. Let us denote $C^n(A,A):=Hom(A^{\otimes n},A)$ for $n\geq 1$. Then, for the vector space $A$, there is an associated graded Lie algebra $(C^{\bullet+1}(A,A),[~,~])$. Note that $(A, \mu)$ is an associative algebra if and only if $\mu \in \mathrm{Hom}(A^{\otimes 2}, A)$ is a Maurer-Cartan element in the above graded Lie algebra. The coboundary operator 
\begin{equation}\label{Hoch coboundary}
\delta_{Hoch}(f)=(-1)^{n-1}[\mu,f],\quad \mbox{for all } f\in \mathrm{Hom}(A^{\otimes n},A).
\end{equation}
induced from the Maurer-Cartan element $\mu$ is called the Hochschild coboundary operator. The cohomology of the Hochschild cochain complex $\big(C^\bullet_{Hoch}(A):=\bigoplus_{n\geq 1 } \mathrm{Hom}(A^{\otimes n},A),\delta_{Hoch}\big)$ is called the Hochschild cohomology and it is denoted by $H^{\bullet}_{Hoch}(A)$.
Similarly, we can define the Hochschild cochain complex of $A$ with coefficients in $M$. Let us denote it by $C^\bullet_{Hoch}(A,M)$ and denote the  associated cohomology by $H^\bullet_{Hoch}(A,M)$.

\subsubsection*{Cyclic cohomology}
The cyclic cohomology of an associative algebra $A$ is the cohomology of a certain sub-complex of the Hochschild cochain complex $(C^\bullet_{Hoch}(A,A^*),\delta_{Hoch})$, defined as follows: $A^*$ has an $A$-bimodule structure given by $(afb)(c)=f(bac),$ for all $a,b,c \in A$ and $f\in A^*$. Let us consider the Hochschild complex $(C^\bullet_{Hoch}(A,A^*),\delta_{Hoch})$, then the coboundary $\delta_{Hoch}$ induces a differential $b$ on $C^{\bullet}(A):=C^{\bullet+1}(A,k)$, where
\begin{equation}\label{cyclic coboundary}
\begin{split}
&bf(a_1,a_2,\ldots,a_{n+2})\\
&=\sum^{n+1}_{i=1}(-1)^{(i+1)}f(a_1,\ldots, {a_{i-1}},a_i a_{i+1}, {a_{i+2}},\ldots,a_{n+2})+(-1)^{n+1}f(a_{n+2}a_1,a_2,\ldots,a_{n+1}).
\end{split}
\end{equation}  
The cohomology of the complex $(C^\bullet_{Hoch}(A,A^*),\delta_{Hoch})$ is denoted by $HH^\bullet(A)$. Let us consider the Hochschild coboundary $\delta_{Hoch}: C^n(A)\rightarrow C^{n+1}(A)$ of $A$ with trivial coefficients and the cyclic action $\lambda:C^n(A)\rightarrow C^{n}(A)$ defined by
\begin{align*}
\delta_{Hoch}f(a_1,a_2,\ldots,a_{n+2})&=\sum^{n+1}_{i=1}(-1)^{(i+1)}f(a_1,\ldots, {a_{i-1}},a_i a_{i+1}, {a_{i+2}},\ldots,a_{n+2})\\
\lambda f(a_1,a_2,\ldots,a_{n+1})&:=(-1)^n f(a_{n+1},a_1,\ldots,a_{n}).
\end{align*} 
Then, the operators $b$, $\delta_{Hoch}$, and $\lambda$ satisfy the identity $\delta_{Hoch}(1-\lambda)=(1-\lambda)b$. Let us define 
$$C_{\lambda}^\bullet(A):=Ker(1-\lambda),$$
then $(C_{\lambda}^\bullet,b)$ forms a cochain complex and the associated cohomology is called the cyclic cohomology of $A$. Let us denote the cyclic cohomology of $A$ by $HC^\bullet(A)$.
\subsection{Relative Rota-Baxter operators}
\begin{defn}[\cite{uchino}]\label{O-operator}
Let $A$ be an associative algebra and $M$ be an $A$-bimodule. A linear map $T:M\rightarrow A$ is said to be a relative Rota-Baxter operator on $M$ over the algebra $A$ if $T$ satisfies
$$ T(m)T(n)=T(T(m)n+mT(n)),\quad \mbox{for all  } m,n\in M.$$ 
\end{defn}
A Rota-Baxter operator on $A$ is a relative Rota-Baxter operator on the adjoint $A$-bimodule. A relative Rota-Baxter associative algebra is a triple $(A,M,T)$ consisting of an associative algebra $A$, an $A$-bimodule $M$, and a relative Rota-Baxter operator $T:M\rightarrow A$. A Rota-Baxter algebra is a relative Rota-Baxter algebra with respect to the adjoint $A$-bimodule. 

Let $(A,M,T)$ be a relative Rota-Baxter algebra, where the associative product in $A$ is given by $\mu$, and the $A$-bimodule actions on $M$ are given by $l$ and $r$. If we need to emphasize the underlying structure maps, we alternatively use the notation $((A,\mu),(M,l,r),T)$ to denote the relative Rota-Baxter algebra $(A,M,T)$ with structure maps. 

\begin{defn}\label{morphisms}
Let $(A,M,T)$ and $(A^{\prime},M^{\prime},T^{\prime})$ be two relative Rota-Baxter algebras. A homomorphism of relative Rota-Baxter algebras from $(A,M,T)$ to $(A^{\prime},M^{\prime},T^{\prime})$ is a pair $(\varphi,\psi)$ of an algebra homomorphism $\varphi:A\rightarrow A^{\prime}$ and a linear map $\psi:M\rightarrow M^{\prime}$ satisfying 
$$T^{\prime}\circ \psi=\varphi\circ T, \quad \psi(am)=\varphi(a)\psi(m),\quad \mbox{and} \quad \psi(ma)=\psi(m)\varphi(a),\quad \mbox{ for all } a\in A,~m\in M.$$ 
The morphism $(\varphi,\psi)$ is called an isomorphism if $\varphi$ and $\psi$ are linear isomorphisms.
\end{defn}

Let $A$ be an associative algebra and $M$ be an $A$-bimodule. Let $\mu :A\otimes A\rightarrow A$ be the associative product in $A$, and the linear maps $l :A\otimes M\rightarrow M$, $r :M\otimes A\rightarrow M$ be left and right $A$-actions on $M$. Then, the sum $\mu+l+r\in \mathrm{Hom}((A\oplus M)^{\otimes 2}, A\oplus M).$

Let us consider a graded vector space $\bigoplus_{n\geq 1} \mathrm{Hom}(M^{\otimes n}, A)$ with the bracket 
$$[\![P,Q]\!]=(-1)^{m-1}[[\mu+l+r,P],Q],\quad\mbox{for }P\in \mathrm{Hom}(M^{\otimes m}, A)~~\mbox{and}~~ Q\in \mathrm{Hom}(M^{\otimes n}, A).$$
Here, the bracket $[~,~]$ on the right hand side is the Gerstenhaber bracket on the graded vector space $\bigoplus_{n\geq 0}\mathrm{Hom}((A\oplus M)^{\otimes n+1}, A\oplus M)$ (see \cite{das2} for more details). With the above notations, the main result of \cite{das2} can be stated as follows. 
\begin{thm}\label{das-gla}
Let $A$ be an associative algebra and $M$ be an $A$-bimodule. Then,
\begin{enumerate}[(i)]
 \item The graded vector space $ \bigoplus_{n\geq 1} \mathrm{Hom}(M^{\otimes n}, A)$ with the bracket $[\![~,~]\!]$ is a graded Lie algebra. A linear map $T:M\rightarrow A$ is a relative Rota-Baxter operator on $M$ over the algebra $A$ if and only if $T\in Hom(M,A)$ is a Maurer-Cartan element in the graded Lie algebra $(\bigoplus_{n\geq 1} \mathrm{Hom}(M^{\otimes n}, A),[\![~,~]\!])$. Consequently, a relative Rota-Baxter operator $T$ induces a differential $d_T:=[\![T,~]\!]$, which makes the graded Lie algebra $(\bigoplus_{n\geq 1} \mathrm{Hom}(M^{\otimes n}, A),[\![~,~]\!])$ into a differential graded Lie algebra with the differential $d_T$.
 
 \medskip  
\item For a relative Rota-Baxter operator $T:M\rightarrow A$ and a linear map $T^{\prime}:M\rightarrow A$, the sum $T+T^{\prime}$ is also a relative Rota-Baxter operator if and only if $T^{\prime}$ is a Maurer-Cartan element in the differential graded Lie algebra $(\bigoplus_{n\geq 1} \mathrm{Hom}(M^{\otimes n}, A),d_T,[\![~,~]\!])$.  \qed
\end{enumerate}     
\end{thm}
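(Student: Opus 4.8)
The plan is to prove Theorem~\ref{das-gla} by reducing everything to the classical fact (recalled just above) that the Gerstenhaber bracket $[~,~]$ on $\bigoplus_{n\geq 0}\mathrm{Hom}((A\oplus M)^{\otimes n+1}, A\oplus M)$ is a graded Lie algebra, and that $\mathfrak{m}:=\mu+l+r$ is a Maurer-Cartan element of it (i.e.\ $[\mathfrak{m},\mathfrak{m}]=0$), which is exactly the statement that $A$ is an associative algebra and $M$ an $A$-bimodule. Indeed, the three bimodule axioms together with associativity of $\mu$ are precisely the components of $[\mathfrak{m},\mathfrak{m}]=0$ under the decomposition of $(A\oplus M)^{\otimes 3}$ into summands with $0$, $1$, $2$, or $3$ tensor factors from $M$. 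So I would first record this, then study the subspace $\mathfrak{h}:=\bigoplus_{n\geq 1}\mathrm{Hom}(M^{\otimes n}, A)\subseteq \bigoplus_{n\geq 0}\mathrm{Hom}((A\oplus M)^{\otimes n+1}, A\oplus M)$.

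For part~(i), the first key step is to show that $d:=[\mathfrak{m},-]$ maps $\mathfrak{h}$ into itself, shifted: more precisely I would check that for $P\in\mathrm{Hom}(M^{\otimes m},A)$ the element $[\mathfrak{m},P]$ lies in $\mathrm{Hom}(M^{\otimes m+1}, A)\oplus \mathrm{Hom}((A\oplus M)^{\otimes m+1}, M)$ but that \emph{only} the first component is used in the definition of $[\![~,~]\!]$, because when we bracket again with $Q\in\mathrm{Hom}(M^{\otimes n},A)$ all inputs and the output are forced into the right slots; equivalently, one verifies that $\mathfrak{h}$ is an abelian subalgebra of the Gerstenhaber algebra and that the derived bracket $[\![P,Q]\!]=(-1)^{m-1}[[\mathfrak{m},P],Q]$ lands back in $\mathfrak{h}$. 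This is the standard setup of Voronov's derived bracket / a $V$-data with $L=\bigoplus\mathrm{Hom}((A\oplus M)^{\otimes\bullet+1}, A\oplus M)$, abelian subalgebra $\mathfrak{h}$, projection onto $\mathfrak{h}$, and Maurer-Cartan element $\Delta=\mathfrak{m}$; granting $[\mathfrak{m},\mathfrak{m}]=0$, Voronov's theorem (Theorem/construction recalled in Section~\ref{sec-2}) immediately gives that $(\mathfrak{h},[\![~,~]\!])$ is a graded Lie algebra. Then the Maurer-Cartan equation $[\![T,T]\!]=0$ for $T\in\mathrm{Hom}(M,A)$ unwinds, by a direct computation of $[[\mathfrak{m},T],T](m,n)$, to $T(m)T(n)-T(T(m)n+mT(n))=0$, i.e.\ the relative Rota-Baxter identity; and the general Maurer-Cartan/twisting yoga in any graded Lie algebra gives that $d_T=[\![T,-]\!]$ squares to zero, so $(\mathfrak{h}, d_T, [\![~,~]\!])$ is a DGLA.

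For part~(ii), I would simply expand $[\![T+T',T+T']\!]=[\![T,T]\!]+2[\![T,T']\!]+[\![T',T']\!]$ using graded antisymmetry (the degrees are arranged so the cross terms add), use part~(i) to kill $[\![T,T]\!]=0$, and observe $2[\![T,T']\!]=2\,d_T(T')$; hence $T+T'$ satisfies the Maurer-Cartan equation in $(\mathfrak h,[\![~,~]\!])$ iff $d_T(T')+\tfrac12[\![T',T']\!]=0$, which is exactly the Maurer-Cartan equation of $T'$ in the DGLA $(\mathfrak h, d_T,[\![~,~]\!])$. The main obstacle — really the only place where care is needed — is the bookkeeping in the first step: verifying that $[\![~,~]\!]$ genuinely preserves $\mathfrak h=\bigoplus_{n\geq1}\mathrm{Hom}(M^{\otimes n},A)$ and extracting the precise formula for $[\![P,Q]\!]$ as an explicit alternating sum (with the correct Koszul/degree signs coming from the shift, so that the degree of $P\in\mathrm{Hom}(M^{\otimes m},A)$ is $m-1$), since this is where the interplay between the $\mu$-part, the $l$-part and the $r$-part of $\mathfrak m$ produces the four types of terms. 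Once the derived-bracket formula and the identification of degrees are pinned down, everything else is either Voronov's theorem applied verbatim or a one-line Maurer-Cartan manipulation; I would refer to \cite{das2} for the detailed sign computation and present here only the resulting bracket and the two Maurer-Cartan unwindings.
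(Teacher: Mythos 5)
Your proposal is correct and follows essentially the same route as the source: the paper itself states this theorem without proof (it is quoted from \cite{das2}, hence the \qed), and the derived-bracket argument you give --- realizing $\mathfrak{h}=\bigoplus_{n\geq1}\mathrm{Hom}(M^{\otimes n},A)$ as the abelian subalgebra of bidegree $-1|n$ maps, checking $[[\mu+l+r,P],Q]$ lands back in $\mathfrak{h}$, and reading off the Rota-Baxter identity and the twisting statement from the Maurer-Cartan formalism --- is exactly the mechanism the paper sets up in Sections \ref{sec-3}--\ref{sec-4} and that \cite{das2} uses. The one point to tidy is the grading: if you assign $P\in\mathrm{Hom}(M^{\otimes m},A)$ degree $m-1$ as you state, the derived bracket is graded \emph{symmetric} (the $L_\infty[1]$ convention), which is why the cross terms in $[\![T+T',T+T']\!]$ add; to present $[\![~,~]\!]$ as a graded \emph{Lie} (antisymmetric) bracket as in the theorem one should place $\mathrm{Hom}(M^{\otimes m},A)$ in degree $m$, so that $T,T'$ are odd and $[\![T,T']\!]=[\![T',T]\!]$ still holds.
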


\noindent More precisely, an explicit description of the differential $d_T=[\![T,~]\!]$ is given by 
\begin{equation*}
\begin{split}
d_T P(m_1,m_2,\ldots,m_{n+1})
=&T(P(m_1,m_2,\ldots,m_n)m_{n+1})-(-1)^{n}T(m_1 P(m_2,\ldots,m_{n+1}))\\
 &-(-1)^n\Big\{\sum_{i=1}^{n}(-1)^{i-1}P(m_1,\ldots,m_{i-1},T(m_i)m_{i+1},m_{i+2},\ldots,m_{n+1})\\
 &\quad \quad\quad\quad -\sum_{i=1}^{n} (-1)^{i}  P(m_1,\ldots,m_{i-1},m_iT(m_{i+1}),m_{i+2},\ldots,m_{n+1})\Big\}\\
&+(-1)^n T(m_1)P(m_2,\ldots,m_{n+1})-P(m_1,\ldots,m_n)T(m_{n+1}),
\end{split}
\end{equation*}
for all $P\in \mathrm{Hom}(M^{\otimes n}, A)$ and $m_1,m_2,\ldots,m_{n+1}\in M$.

Let us define a cochain complex $(C^{\bullet}(T),d_T)$ with the graded vector space $C^{\bullet}(T)=\bigoplus_{n\geq 0} C^{n}(T)$, where $C^0(T)=C^1(T):=0$ and  $C^n(T):=\mathrm{Hom}(M^{\otimes n-1}, A)$, for $n\geq 2$. The associated cohomology is called the cohomology of $T$ and denoted by $H^\bullet(T)$.  

\subsection{$L_{\infty}$-algebras and Voronov's construction}
In this subsection, we give necessary background on $L_{\infty}$-algebras and their construction form a $V$-data \cite{voro}. 

The notion of $L_{\infty}$-algebras was introduced by Lada and Stasheff \cite{lada-stasheff} as a homotopy analogy of (graded) Lie algebras. Throughout the paper, we use a `shifted' version of $L_{\infty}$-algebras (called $L_{\infty}[1]$-algebras) in which all multilinear maps are graded symmetric and have degree $1$.
  
\begin{defn}
An $L_{\infty}[1]$-algebra is a graded vector space $W=\sum\limits_{i\in \mathbb{Z}} W_i$ together with a collection of degree $1$ multilinear maps $\{l_k:W^{\otimes k}\rightarrow W\}_{k\geq 1}$ satisfying
\begin{enumerate}[(i)]
\item graded symmetry: for $k\geq 1$,
$$l_k(x_{\sigma(1)},\ldots,x_{\sigma(k)})=\epsilon(\sigma)l_k(x_{1},\ldots,x_k),\quad\mbox{for any }\sigma\in S_k,$$
\item shifted higher Jacobi identities: for each $n\geq 1$,
$$\sum_{i+j=n+1}\sum_{\sigma\in S_{(i,n-i)}}\epsilon(\sigma)l_j(l_i(x_{\sigma(1)},\ldots,x_{\sigma(i)}),x_{\sigma(i+1)},\ldots,x_{\sigma(n)})=0.$$
\end{enumerate}
Here, $S_{(i,n-i)}$ denotes the set of $(i,n-i)$-shuffles in the permutation group $S_{n}$. For any permutation $\sigma\in S_k$ and $x_1,x_2,\ldots,x_k\in V$, $\epsilon(\sigma):=\epsilon(\sigma;x_1,x_2,\ldots,x_k)$ denotes the {\em Koszul sign}.
\end{defn}

\begin{remark}
The equivalence between $L_{\infty}$-algebras and $L_{\infty}[1]$-algebras is given by a degree shift. More precisely, an $L_{\infty}$-algebra structure (in the sense of \cite{sta}) on a graded vector space $V=\sum\limits_{i\in \mathbb{Z}}V_i$ is equivalent to an $L_{\infty}[1]$-algebra structure on the graded vector space $W:=V[1]$, where $W_i=(V[1])_i=V_{i+1}$. The correspondence between multi-brackets of these two structures is given by the d\'ecalage isomorphisms
$$V^{\otimes n}[n]\cong (V[1])^{\otimes n},~~v_1\ldots v_n\mapsto (-1)^{\big((n-1)|v_1|+\cdots+2|v_{n-1}|+|v_{n-1}|\big)}v_1\ldots v_n,$$
where $|v_i|$ denotes the degree of homogeneous element $v_i$ in the graded vector space $V$.
\end{remark}

The notion of filtered $L_{\infty}[1]$-algebras was introduced by Getzler \cite{getzler}, which ensures the convergence of certain infinite sums. For the results in this paper, we only need the following weaker notion \cite{laza-rota}.
\begin{defn}[\cite{laza-rota}]\label{weakly filtered}
A weakly filtered $L_{\infty}[1]$-algebra is a triple $(W,\{l_k\}_{k\geq 1},\mathcal{F}_{\bullet}W)$, where $(W,\{l_k\}_{k\geq 1})$ is an $L_{\infty}[1]$-algebra and $\mathcal{F}_{\bullet}W$ is a descending filtration of $W$ such that $W=\mathcal{F}_{1}W\supset\cdots\supset \mathcal{F}_{n}W\supset \cdots $ and the following conditions are satisfied: 
\begin{enumerate}[(i)]
\item there exists $n\geq 1$ such that $l_k(W,\ldots, W)\subset \mathcal{F}_{k}W,$ for all $k\geq n$ and 
\item $W$ is complete with respect to this filtration. Thus, there is an isomorphism of graded vector spaces $W\cong \lim\limits_{\longleftarrow}W/\mathcal{F}_{n}W$. 
\end{enumerate} 
\end{defn}

\begin{defn}
Let $(W,\{l_k\}_{k\geq 1},\mathcal{F}_{\bullet}W)$ be a weakly filtered $L_{\infty}[1]$-algebra. An element $\alpha\in W^0$ is called a Maurer-Cartan element if $\alpha$ satisfies 
$$\sum_{k=1}^\infty \frac{1}{k!}l_k(\alpha,\alpha,\ldots,\alpha)=0.$$
The set of all Maurer-Cartan elements in $W$ is denoted by $\mathrm{MC}(W)$.
\end{defn}
It is known that a filtered $L_{\infty}[1]$-algebra can be twisted by a Maurer-Cartan element \cite{getzler}. Similarly, for weakly filtered $L_{\infty}[1]$-algebra, we have the following result from \cite{laza-rota}.
\begin{thm}[\cite{laza-rota}]\label{twisted structure}
Let $(W,\{l_k\}_{k\geq 1},\mathcal{F}_{\bullet}W)$ be a weakly filtered $L_{\infty}[1]$-algebra and $\alpha\in W^0$ be a Maurer-Cartan element. Then, $(W,\{l_k^\alpha\}_{k\geq 1},\mathcal{F}_{\bullet}W)$ is a weakly filtered $L_{\infty}[1]$-algebra, where
$$l_k^\alpha(x_1,x_2,\ldots,l_k)=\sum_{n=0}^\infty \frac{1}{n!}l_{k+n}(\underbrace{\alpha,\ldots,\alpha}_n,x_1,x_2,\ldots,x_k). $$\qed  
 \end{thm}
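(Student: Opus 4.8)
The plan is to follow Getzler's twisting construction \cite{getzler} essentially verbatim, noting that each appeal to a genuine filtration can be replaced by the two conditions of Definition \ref{weakly filtered}. First I would check that the infinite sums defining $l_k^\alpha$ converge in $W$: fixing $n$ as in Definition \ref{weakly filtered}(i), for homogeneous $x_1,\dots,x_k$ every summand $\tfrac{1}{m!}\,l_{k+m}(\alpha,\dots,\alpha,x_1,\dots,x_k)$ with $k+m\geq n$ lies in $\mathcal F_{k+m}W$, and since $\bigcap_j \mathcal F_j W=0$ and $W$ is complete, the partial sums form a Cauchy sequence and converge. The degree and graded-symmetry axioms for $l_k^\alpha$ are then immediate: $|\alpha|=0$ forces each summand to have degree $1$, and one permutes the $x_i$'s past the degree-zero copies of $\alpha$ (and the $\alpha$'s among themselves) without extra signs, so graded symmetry of the $l_{k+m}$ passes to $l_k^\alpha$.

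The heart of the argument is the shifted higher Jacobi identities for $\{l_k^\alpha\}$. Here I would pass to the cofree conilpotent cocommutative coalgebra on $W$, completed with respect to the filtration, and encode $\{l_k\}_{k\geq 1}$ as a codifferential $\widehat D=\sum_{k\geq 1}\widehat{l_k}$, the Jacobi identities being equivalent to $\widehat D^2=0$. Since $\alpha\in W^0$ is Maurer-Cartan, the group-like element $e^\alpha=\sum_{m\geq 0}\tfrac{1}{m!}\,\alpha^{\odot m}$ satisfies $\widehat D(e^\alpha)=0$; left multiplication $L_{e^\alpha}$ by $e^\alpha$ is a coalgebra automorphism with inverse $L_{e^{-\alpha}}$, and I would verify that the conjugate codifferential $\widehat D^\alpha:=L_{e^{-\alpha}}\circ\widehat D\circ L_{e^\alpha}$ has components exactly $\{l_k^\alpha\}_{k\geq 1}$ and vanishing $0$-ary component (its vanishing being precisely the Maurer-Cartan equation, so that $\widehat D^\alpha$ defines an honest, non-curved $L_\infty[1]$-structure). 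Then $(\widehat D^\alpha)^2=L_{e^{-\alpha}}\circ\widehat D^2\circ L_{e^\alpha}=0$ yields all the higher Jacobi identities for $\{l_k^\alpha\}$ at once. A reader preferring to avoid coalgebras may instead expand the $n$-th Jacobi expression for $\{l_k^\alpha\}$ directly into iterated brackets in the $\alpha$'s and the $x_i$'s and cancel terms using the Jacobi identities for $\{l_k\}$ together with $\sum_{m\geq 1}\tfrac{1}{m!}\,l_m(\alpha,\dots,\alpha)=0$; this is the bookkeeping concealed in the phrase ``standard twisting''.

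Finally I would re-establish the weakly filtered conditions for the twisted triple. The filtration $\mathcal F_\bullet W$ and the completeness of $W$ are unchanged, so only condition (i) needs attention: with the same $n$ as above, for $k\geq n$ each summand $l_{k+m}(\alpha,\dots,\alpha,x_1,\dots,x_k)$ lies in $\mathcal F_{k+m}W\subseteq\mathcal F_k W$, and $\mathcal F_k W$ is closed in $W$, whence $l_k^\alpha(W,\dots,W)\subseteq\mathcal F_k W$; thus $(W,\{l_k^\alpha\}_{k\geq 1},\mathcal F_\bullet W)$ is again weakly filtered. The main obstacle — and the only point that really differs from the classical filtered case — is making the completed-coalgebra manipulations legitimate under the weaker hypotheses: one must verify that $\widehat D$, the coproduct, and $L_{e^{\pm\alpha}}$ all extend continuously to the completion, so that $\widehat D(e^\alpha)=0$ and $(\widehat D^\alpha)^2=0$ are meaningful identities there. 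Once that topological point is settled the remainder is formal, and since the statement is taken from \cite{laza-rota} one may alternatively simply cite that reference.
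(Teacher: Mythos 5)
The paper does not prove this statement at all: it is quoted from \cite{laza-rota} with the proof omitted (hence the \qed{} in the statement), so there is nothing in the paper to compare against. Your argument is the standard Getzler-style twisting proof, correctly adapted to the weakly filtered setting --- convergence of the defining sums from condition (i) plus completeness, the higher Jacobi identities via conjugating the codifferential by the group-like element $e^{\alpha}$ (with the $0$-ary component killed exactly by the Maurer--Cartan equation), and preservation of the filtration conditions --- and this is essentially the proof given in the cited reference, so it is a valid substitute for the citation.
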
 

Next, let us recall the construction of an $L_{\infty}[1]$-algebra from a $V$-data, which is given by Voronov's higher derived brackets \cite{voro}. 
\begin{defn}
A {\em $V$-data} is a quadruple $(L,\mathfrak{a},P,\Delta)$, where $L$ is a graded Lie algebra (with graded Lie bracket $[~,~]$), $\mathfrak{a}\subset L$ is an abelian graded Lie subalgebra, $P:L\rightarrow \mathfrak{a}$ is a projection map whose kernel is a graded Lie subalgebra of $L$, and $\Delta$ is a degree $1$ homogeneous element in $\mathrm{Ker}(P)$ satisfying $[\Delta, \Delta]=0$.
\end{defn}

For a $V$-data $(L,\mathfrak{a},P,\Delta)$, the following theorem yields an $L_{\infty}[1]$-algebra structure on $\mathfrak{a}$. 
\begin{thm}[\cite{voro}]\label{Voro-JPAA1}
Let $(L,\mathfrak{a},P,\Delta)$ be a $V$-data. Then, the graded vector space $\mathfrak{a}\subset L$ together with the operations 
$$l_k(a_1,\ldots,a_k):=P[\cdots[[\Delta,a_1],a_2],\ldots,a_k],\quad\mbox{for } k\geq 1$$ 
is an $L_{\infty}[1]$-algebra.  \qed
\end{thm}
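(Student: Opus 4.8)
This is Voronov's higher-derived-bracket theorem, so the plan is only to indicate how one would argue; for the full argument we refer to \cite{voro}. First I would settle the structural parts. Each $l_k$ takes values in $\mathfrak a$ thanks to the projection $P$, and because $\Delta$ has degree $1$ while every additional bracket with an $a_i$ changes the degree by $|a_i|$, the map $l_k$ is homogeneous of degree $1$ as a map $\mathfrak a^{\otimes k}\to\mathfrak a$. For the graded symmetry it suffices to show that even the ``uncorrected'' $L$-valued maps $\widetilde l_k(a_1,\dots,a_k):=[\cdots[[\Delta,a_1],a_2],\dots,a_k]$ are graded symmetric on $\mathfrak a$, since $l_k=P\circ\widetilde l_k$; I would prove invariance under an adjacent transposition $(a_i,a_{i+1})$ by writing $W:=[\cdots[[\Delta,a_1],\dots],a_{i-1}]$ (with $W=\Delta$ when $i=1$) and applying the graded Jacobi identity in $L$,
$$[[W,a_i],a_{i+1}]=[W,[a_i,a_{i+1}]]+(-1)^{|a_i||a_{i+1}|}[[W,a_{i+1}],a_i],$$
where the first term vanishes because $[a_i,a_{i+1}]=0$ in the abelian subalgebra $\mathfrak a$; bracketing the rest on the outside with $a_{i+2},\dots,a_k$ and applying $P$ yields the Koszul-signed symmetry of $l_k$.

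The substance is the family of shifted higher Jacobi identities for $\{l_k\}$. The plan is a direct expansion: each summand $l_j(l_i(a_{\sigma(1)},\dots,a_{\sigma(i)}),a_{\sigma(i+1)},\dots,a_{\sigma(n)})$ equals $P\,\widetilde l_j\big(P\widetilde l_i(\cdots),\cdots\big)$, and I would rewrite it by repeatedly using the graded Jacobi identity in $L$ to transport the two copies of $\Delta$ past the various $a_i$'s and, via the decomposition $L=\mathfrak a\oplus\ker P$, to strip the inner projection. The defining properties of a $V$-data are exactly what make the resulting sum collapse: any term containing the factor $[\Delta,\Delta]$ vanishes; any term containing a factor $[Px,a_j]$ with $x\in L$ vanishes, because $Px$ and $a_j$ both lie in the abelian $\mathfrak a$ (this is precisely what lets the \emph{inner} projection in $l_j(l_i(\cdots),\cdots)$ disappear); and the remaining terms cancel in pairs, the pairing being produced by the graded Jacobi identity together with the fact that $\ker P$ is a graded Lie subalgebra containing $\Delta$ (so brackets of $\Delta$ with $\ker P$-elements stay inside $\ker P$ and are annihilated by the final $P$). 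As a sanity check the case $n=1$ is immediate: for $b:=l_1(a)=P[\Delta,a]$ one has $[\Delta,b]=[\Delta,[\Delta,a]]-[\Delta,(\Id-P)[\Delta,a]]$; the first term is $0$ because $[\Delta,\Delta]=0$, and the second lies in $\ker P$ because $\Delta$ and $(\Id-P)[\Delta,a]$ do, so $l_1(l_1(a))=P[\Delta,b]=0$.

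The main obstacle is the combinatorial bookkeeping in the general case: identifying which terms pair against which, matching them against the $(i,n-i)$-shuffle sum, and tracking the Koszul signs carefully enough that the cancellation is exact. This is the computation carried out in \cite{voro}, to which we refer for the details.
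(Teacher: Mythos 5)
The paper offers no proof of this statement: it is recalled verbatim from Voronov \cite{voro} and stamped with a \qed, so your deferral to the same source is exactly the paper's "approach." The parts of your sketch that can be checked on the spot are all correct — the degree count, the graded symmetry via the Jacobi identity $[[W,a_i],a_{i+1}]=[W,[a_i,a_{i+1}]]+(-1)^{|a_i||a_{i+1}|}[[W,a_{i+1}],a_i]$ together with $[a_i,a_{i+1}]=0$ in the abelian $\mathfrak a$, and the $n=1$ case using $[\Delta,\Delta]=0$ and the fact that $\ker P$ is a graded Lie subalgebra containing $\Delta$ — and your description of the cancellation mechanism for the higher Jacobi identities is a fair summary of Voronov's actual argument (whose key computation identifies the $n$-th Jacobiator of the $l_k$ with the $n$-th derived bracket of $\tfrac12[\Delta,\Delta]$).
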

In fact, there is a larger $L_{\infty}[1]$-algebra in which the $L_{\infty}[1]$-algebra $\mathfrak{a}$, defined by Theorem \ref{Voro-JPAA1} is an $L_{\infty}[1]$-subalgebra.

\begin{thm}\label{Voro-JPAA2}
Let $(L,\mathfrak{a},P,\Delta)$ be a $V$-data. Then, the graded vector space $L[1]\oplus\mathfrak{a}$ is an $L_{\infty}[1]$-algebra with multilinear operations 
\begin{align*}
l_1(x[1],a)&=(-[\Delta,x][1],P(x+[\Delta,\mathfrak{a}])),\\
l_2(x[1],y[1])&=(-1)^{|x|}[x,y][1],\\
l_k(x[1],a_1,\ldots,a_{k-1})&=P[\cdots[[x,a_1],a_2],\ldots,a_{k-1}],\quad\mbox{for }k\geq 2,\\
l_k(x[1],a_1,\ldots,a_{k-1})&=P[\cdots[[\Delta,a_1],a_2],\ldots,a_{k-1}],\quad\mbox{for }k\geq 2.
\end{align*}
Here, $x,y$ are homogeneous elements in $L$ and $a,a_1,\ldots,a_k$ are homogeneous elements in $\mathfrak{a}$. Up to the permutations of the above entries, all other multilinear operations vanish.  \qed
\end{thm}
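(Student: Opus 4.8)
The plan is to verify directly the two defining conditions of an $L_\infty[1]$-algebra — graded symmetry of each $l_k$ and the shifted higher Jacobi identities — for $W := L[1] \oplus \mathfrak{a}$ equipped with the operations listed, using Theorem \ref{Voro-JPAA1} to absorb the bulk of the computation and reducing the remainder to the graded Jacobi identity of $L$, the relation $[\Delta,\Delta] = 0$, and the $V$-data axioms; in particular the fact that $\ker P$ is a graded Lie subalgebra enters repeatedly in the form $P[\xi,\eta] = P[\xi, P\eta]$ for $\xi \in \ker P$. Graded symmetry is routine: for the operations valued in $\mathfrak{a}$ it follows from the graded symmetry of Voronov's brackets on $\mathfrak{a}$ together with the fact that the auxiliary entries are inserted one at a time into an iterated commutator, while for $l_2(x[1],y[1]) = (-1)^{|x|}[x,y][1]$ it is the standard d\'ecalage-sign computation built on the graded antisymmetry of $[-,-]$ in $L$.

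For the shifted higher Jacobi identities I would fix $n \geq 1$ and, by multilinearity, assume each of the $n$ arguments lies purely in $L[1]$ or purely in $\mathfrak{a}$, with $p$ of them in $L[1]$, say $x_1[1],\dots,x_p[1]$, and $q = n - p$ in $\mathfrak{a}$, say $a_1,\dots,a_q$. Two structural observations drive everything: an inner bracket $l_i$ has a nonzero $L[1]$-component only when it is $l_1$ applied to one of the $x_j[1]$ (contributing $-[\Delta,x_j][1]$) or $l_2$ applied to two of them; and every nonzero $l_k$ accepts at most one $L[1]$-argument, the only exception being $l_2(x[1],y[1])$. Counting the $L[1]$-slots through the double sum then settles several cases at once: when $p = 0$ every intermediate output lies in $\mathfrak{a}$ and the identity is precisely the higher Jacobi identity of the $L_\infty[1]$-algebra $\mathfrak{a}$ from Theorem \ref{Voro-JPAA1}; when $p \geq 4$ every term of the double sum vanishes; and when $p = 3$ the identity is trivial for $q \geq 1$ and reduces, for $q = 0$, to the graded Jacobi identity of $L$ applied to $\Delta, x_1, x_2, x_3$.

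This leaves $p = 1$ and $p = 2$, which carry the real content, and in both the work sits in the $\mathfrak{a}$-component of the identity: the $L[1]$-component is identically zero except in the single case $p = 2$, $q = 0$, $n = 2$, where it is once more the graded Jacobi identity of $L$ for $\Delta, x, y$, and in the case $p = 1$, $n = 1$, where the identity is $l_1 \circ l_1 = 0$ with $L[1]$-part $[\Delta,[\Delta,x]][1] = \tfrac{1}{2}[[\Delta,\Delta],x][1] = 0$ and $\mathfrak{a}$-part the instance $P[\Delta, Px] = P[\Delta, x]$ of the identity above. For general $p = 1$ with entry $x[1]$, the $\mathfrak{a}$-component of the Jacobi identity is what the computation proving Theorem \ref{Voro-JPAA1} produces after substituting $x$ for one occurrence of $\Delta$, augmented by the finitely many extra terms coming from the $\mathfrak{a}$-component $Px$ of $l_1(x[1])$ and from the mixed brackets $l_2(x[1],a)$ and $l_k(x[1], a_1, \dots, a_{k-1})$; the plan is to show that, after resummation over shuffles with the Koszul signs, these extra terms assemble into $P$ applied to iterated Jacobi-identity expressions in $L$ (and to $[\Delta,\Delta]$), and therefore cancel. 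The case $p = 2$ is entirely analogous, with the mixed bracket $l_2(x[1],y[1])$ now in the role that $l_1(x[1])$ played for $p = 1$.

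The hard part is exactly this resummation in the case $p = 1$, and the parallel $\mathfrak{a}$-component check for $p = 2$: one must match, term by term and with the correct signs, the mixed brackets against iterated commutators so that whatever is not already handled by Theorem \ref{Voro-JPAA1} is exhibited as $P$ of a Jacobi-identity expression or of $[\Delta,\Delta]$. The cleanest way to organize this — and the route I would ultimately take — is to pass to the coalgebraic picture: recognize $\{l_k\}_{k \geq 1}$ as the components of a single degree-$1$ coderivation $Q$ of the reduced symmetric coalgebra $\overline{S}(W)$ and verify $Q \circ Q = 0$. The sign bookkeeping then becomes automatic, and $Q^2 = 0$ again splits according to the number of $L[1]$-tensor factors, each piece collapsing to $[\Delta,\Delta] = 0$ and the graded Jacobi identity of $L$ together with $P[\xi,\eta] = P[\xi,P\eta]$ for $\xi \in \ker P$.
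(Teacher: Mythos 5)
The paper offers no proof of this theorem: it is imported from Voronov's derived--bracket construction \cite{voro,voro2} (see also \cite{yael-zam}) and stated with a tombstone, so there is no in-text argument to compare you against, and your direct verification is necessarily a different route. Its skeleton is sound. Grading each Jacobi identity by the number $p$ of arguments lying in $L[1]$, observing that only $l_1(x[1])$ and $l_2(x[1],y[1])$ can output into $L[1]$ and that no nonzero operation accepts more than one $L[1]$-entry except $l_2(x[1],y[1])$, disposing of $p=0$ via Theorem \ref{Voro-JPAA1}, of $p\geq 4$ and of $p=3$ with $q\geq 1$ by counting slots, and isolating the identity $P[\xi,\eta]=P[\xi,P\eta]$ for $\xi\in\ker P$ (which is exactly where the hypothesis that $\ker P$ is a subalgebra and $\mathfrak{a}$ is abelian enter) -- all of this is correct and is the right toolkit. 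You also implicitly repair the typos in the printed statement (the second component of $l_1$ should read $P(x+[\Delta,a])$, and the last displayed operation should be $l_k(a_1,\ldots,a_k)=P[\cdots[[\Delta,a_1],a_2],\ldots,a_k]$), which is necessary before any verification can begin.

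Two caveats. First, the genuine content -- the $\mathfrak{a}$-component of the higher Jacobi identities for $p=1$ and $p=2$ with $q\geq 1$ -- is described rather than carried out: asserting that "the extra terms assemble into $P$ applied to Jacobi expressions" is precisely the claim to be proved, and passing to a coderivation $Q$ on $\overline{S}(L[1]\oplus\mathfrak{a})$ repackages the signs but does not remove the resummation. If you want to avoid that computation altogether, the cleaner route (and the one the cited sources effectively take) is to exhibit $L[1]\oplus\mathfrak{a}$ with these operations as the output of Theorem \ref{Voro-JPAA1} applied to an auxiliary $V$-data built from $L$, so that the already-established case does all the work. Second, a small slip: for $p=3$, $q=0$ the only surviving terms are the $l_2(l_2(\cdot,\cdot),\cdot)$ ones, so that identity is the graded Jacobi identity of $L$ for $x_1,x_2,x_3$ alone -- $\Delta$ does not enter, since the terms $l_3(l_1(x_i[1]),x_j[1],x_k[1])$ that would introduce $[\Delta,x_i]$ vanish, each having at least two $L[1]$-entries.
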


\begin{remark}\label{Voro3}
Let $(L,\mathfrak{a},P,\Delta)$ be a $V$-data. If $L^{\prime}$ is a graded Lie subalgebra of $L$ such that $[\Delta,L^{\prime}]\subset L^{\prime}$, then $L^{\prime}[1]\oplus\mathfrak{a}$ is an $L_{\infty}[1]$-subalgebra of the $L_{\infty}[1]$-algebra $L[1]\oplus\mathfrak{a}$ (see \cite{yael-zam} for more details). 
\end{remark}

\section{cohomology of an $\mathsf{AssBimod}$-pair}\label{sec-3}
Let $A$ be an associative algebra and $M$ be an $A$-bimodule. Then, we call the pair $(A,M)$ an {\em $\mathsf{AssBimod}$-pair}. The aim of this section is to find a graded Lie algebra associated to the vector spaces $A$ and $M$ such that the Maurer-Cartan elements in the graded Lie algebra are in bijective correspondence with the $\mathsf{AssBimod}$-pair structures on the pair $(A,M)$. Subsequently, we define cohomology and deformations of a $\mathsf{AssBimod}$-pair.

First, we recall the following notations form \cite{uchino2}. Let $\mathcal{A}_1$ and $\mathcal{A}_2$ be two vector spaces. For any linear map $f:\mathcal{A}_{i_1}\otimes \mathcal{A}_{i_2}\otimes \cdots\otimes \mathcal{A}_{i_n}\rightarrow \mathcal{A}_{j}$ with $i_1,\ldots,i_n, j\in \{1,2\}$, we define a map $\hat{f}\in \mathrm{Hom}((\mathcal{A}_1\oplus \mathcal{A}_{2})^{\otimes n},\mathcal{A}_1\oplus \mathcal{A}_{2})$ by 
\begin{equation}\label{horizontal lift}
\hat{f}= \twopartdef
{f}      {\mathcal{A}_{i_1}\otimes \mathcal{A}_{i_2}\otimes \cdots\otimes \mathcal{A}_{i_n},}
{0}      {.}
\end{equation}
The map $\hat{f}$ is called a horizontal lift of $f$ or simply a lift of $f$. In particular, the lifts of the linear maps $\mu:\mathcal{A}_1\otimes \mathcal{A}_1\rightarrow \mathcal{A}_1$, $l:\mathcal{A}_1\otimes \mathcal{A}_2\rightarrow \mathcal{A}_2$, and $r:\mathcal{A}_2\otimes \mathcal{A}_1\rightarrow \mathcal{A}_2$ are respectively given by    
\begin{equation*}
\begin{split}
\hat{\mu}((a,u),(b,v))&=(\mu(a,b),0),\\
\hat{l}((a,u),(b,v))&=(0,l(a,v)),\\
\hat{r}((a,u),(b,v))&=(0,r(u,b)),\quad \mbox{for all } a,b\in \mathcal{A}_1, ~~u,v\in \mathcal{A}_2.
\end{split}
\end{equation*}
Let $\mathcal{A}^{k,l}$ be the direct sum of all possible $(k+l)$-tensor powers of $\mathcal{A}_1$ and $\mathcal{A}_2$  in which $\mathcal{A}_1$ (respectively, $\mathcal{A}_2$) appears $k$ times (respectively, $l$ times). For instance, 
$$\mathcal{A}^{2,0}:=\mathcal{A}_1\otimes \mathcal{A}_1,\quad \mathcal{A}^{1,1}:=(\mathcal{A}_1\otimes \mathcal{A}_2)\oplus (\mathcal{A}_2\otimes \mathcal{A}_1), \quad\mathcal{A}^{0,2}:=\mathcal{A}_2\otimes \mathcal{A}_2,$$
and
$$\mathcal{A}^{1,2}:=(\mathcal{A}_1\otimes \mathcal{A}_2\otimes \mathcal{A}_2)\oplus (\mathcal{A}_2\otimes \mathcal{A}_1\otimes \mathcal{A}_2)\oplus (\mathcal{A}_2\otimes \mathcal{A}_2\otimes \mathcal{A}_1).$$ 

Note that there is a vector space isomorphism $(\mathcal{A}_1\oplus \mathcal{A}_2)^{\otimes n}\cong \oplus_{k+l=n} \mathcal{A}^{k,l}$. Therefore, we have the isomorphism (by the horizontal lift)
$$\mathrm{Hom}((\mathcal{A}_1\oplus \mathcal{A}_2)^{\otimes n+1},\mathcal{A}_1\oplus \mathcal{A}_2)\cong \sum\limits_ {k+l=n+1}\mathrm{Hom}(\mathcal{A}^{k,l},\mathcal{A}_1) \oplus \sum_{k+l=n+1}\mathrm{Hom}(\mathcal{A}^{k,l},\mathcal{A}_2)$$

We say that a linear map $f\in \mathrm{Hom}((\mathcal{A}_1\oplus \mathcal{A}_2)^{\otimes n+1},\mathcal{A}_1\oplus \mathcal{A}_2)$ has bidegree $k|l$ with $k+l=n$ if $$f(\mathcal{A}^{k+l,l})\subset \mathcal{A}_1,\quad f(\mathcal{A}^{k,l+l})\subset \mathcal{A}_2, \quad\mbox{and } f \mbox{ is zero otherwise}.$$ Let us denote the bidegree of $f$ by $||f||=k|l$. In general, multilinear maps may not have a bidegree. A multilinear map $f$ is called {\em homogeneous} if $f$ has a bidegree. The set of all homogeneous multilinear maps of bidegree $k|l$ is denoted by $C^{k|l}(\mathcal{A}_1\oplus \mathcal{A}_2,\mathcal{A}_1\oplus \mathcal{A}_2)$. Let us observe that 
\begin{equation*}
\begin{split}
C^{k|0}(\mathcal{A}_1\oplus \mathcal{A}_2,\mathcal{A}_1\oplus \mathcal{A}_2)&\cong \mathrm{Hom}(\mathcal{A}_1^{\otimes k+1},\mathcal{A}_1)\oplus \mathrm{Hom}(\mathcal{A}^{k,1},\mathcal{A}_2),\\
C^{-1|l}(\mathcal{A}_1\oplus \mathcal{A}_2,\mathcal{A}_1\oplus \mathcal{A}_2)&\cong \mathrm{Hom}(\mathcal{A}_2^{l},\mathcal{A}_1).
\end{split}
\end{equation*}

The following result has been proved in \cite[Proposition 2.6]{uchino2}.
\begin{prop}\label{bidegree and G-bracket}
If $f\in \mathrm{Hom}((\mathcal{A}_1\oplus \mathcal{A}_2)^{\otimes m+1},\mathcal{A}_1\oplus \mathcal{A}_2)$ has bidegree $k_f|l_f$ and $g\in \mathrm{Hom}((\mathcal{A}_1\oplus \mathcal{A}_2)^{\otimes n+1},\mathcal{A}_1\oplus \mathcal{A}_2)$ has bidegree $k_g|l_g$, then the Gerstenhaber bracket $[f,g]$ is homogeneous and have the bidegree $(k_f+k_g)|(l_f+l_g)$.   \qed
\end{prop}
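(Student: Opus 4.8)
The plan is to reduce everything to the elementary fact that the circle operation $\diamond$ adds bidegrees, after which the bracket statement is immediate since $[f,g]$ is, up to a sign, a difference of two such $\diamond$-products. First I would unwind the definition: to say that $f$ has bidegree $k_f|l_f$ means, writing $m=k_f+l_f$, that $f\in\mathrm{Hom}((\mathcal{A}_1\oplus\mathcal{A}_2)^{\otimes m+1},\mathcal{A}_1\oplus\mathcal{A}_2)$ vanishes on every summand $\mathcal{A}^{p,q}$ ($p+q=m+1$) except that it sends $\mathcal{A}^{k_f+1,\,l_f}$ into $\mathcal{A}_1$ and $\mathcal{A}^{k_f,\,l_f+1}$ into $\mathcal{A}_2$; similarly for $g$ with $n=k_g+l_g$. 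The quantity to track is how the pair $(\#\mathcal{A}_1\text{-slots},\,\#\mathcal{A}_2\text{-slots})$ of a tensor word changes when a homogeneous inner map is substituted into it.

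The key point I would isolate is a small uniformity lemma: if $g$ has bidegree $k_g|l_g$ and $w$ is a length-$(n+1)$ subword on which $g$ does not vanish, then replacing $w$ by the single element $g(w)$ changes the count of $\mathcal{A}_1$-slots by $-k_g$ and the count of $\mathcal{A}_2$-slots by $-l_g$, \emph{independently} of whether $g(w)\in\mathcal{A}_1$ or $g(w)\in\mathcal{A}_2$. Indeed, in the first case $w$ had $k_g+1$ $\mathcal{A}_1$-entries and $l_g$ $\mathcal{A}_2$-entries and is replaced by one $\mathcal{A}_1$-entry, a net change of $(-k_g,-l_g)$; in the second case $w$ had $k_g$ and $l_g+1$ entries and is replaced by one $\mathcal{A}_2$-entry, again $(-k_g,-l_g)$. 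This coincidence of the two cases is really the only non-formal ingredient.

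With this in hand I would conclude as follows. Fix a word $W$ of length $m+n+1$ with $p$ entries in $\mathcal{A}_1$ and $q$ in $\mathcal{A}_2$, and examine a single summand of $(f\diamond g)(W)$, in which $g$ is applied to some length-$(n+1)$ subword. If $g$ vanishes there the term is zero; otherwise, by the lemma, the length-$(m+1)$ word fed to $f$ has profile $(p-k_g,\,q-l_g)$, so by homogeneity of $f$ the term is nonzero only if $(p-k_g,\,q-l_g)\in\{(k_f+1,\,l_f),(k_f,\,l_f+1)\}$, that is, only if $(p,q)\in\{(k_f+k_g+1,\,l_f+l_g),(k_f+k_g,\,l_f+l_g+1)\}$, and in these two cases the output lands in $\mathcal{A}_1$, resp.\ $\mathcal{A}_2$. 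Since the conclusion holds termwise, $(f\diamond g)(W)=0$ unless $W$ has one of those two profiles, in which case $(f\diamond g)(W)\in\mathcal{A}_1$, resp.\ $\mathcal{A}_2$; as $(k_f+k_g)+(l_f+l_g)=m+n$, this is exactly the assertion that $f\diamond g$ is homogeneous of bidegree $(k_f+k_g)|(l_f+l_g)$. The Koszul-type signs $(-1)^{(i-1)n}$ in the definition of $\diamond$ are irrelevant, since bidegree only records which summand of $\mathcal{A}_1\oplus\mathcal{A}_2$ an image sits in and on which words the map is supported. Running the same argument for $g\diamond f$ yields a map of the same bidegree $(k_g+k_f)|(l_g+l_f)$, so $[f,g]=f\diamond g-(-1)^{mn}g\diamond f$ is a difference of two homogeneous maps of bidegree $(k_f+k_g)|(l_f+l_g)$, hence homogeneous of that bidegree. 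The only thing demanding care in a full write-up is keeping the arity/bidegree bookkeeping consistent — that $m+1$, $n+1$, $m+n+1$ and the relation ``$k+l=\text{arity}-1$'' line up throughout — and I do not expect any genuine obstacle beyond that.
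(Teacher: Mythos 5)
Your argument is correct. The one non-formal step — that substituting a nonvanishing application of $g$ into a tensor word shifts the $(\#\mathcal{A}_1,\#\mathcal{A}_2)$ profile by $(-k_g,-l_g)$ regardless of whether the output lands in $\mathcal{A}_1$ or $\mathcal{A}_2$ — is exactly the right observation, and the termwise analysis of $f\diamond g$, the symmetry for $g\diamond f$, and the arity bookkeeping ($k+l=\text{arity}-1$ throughout) all check out. Note, however, that the paper does not prove this proposition at all: it is stated with a citation to Uchino's work (Proposition 2.6 of the reference on twisting and Rota-Baxter type operators), so there is no in-paper argument to compare against. Your direct verification is the standard one and is essentially what the cited source does; the only cosmetic caveat is that the paper's displayed definition of bidegree contains typos ($\mathcal{A}^{k+l,l}$ and $\mathcal{A}^{k,l+l}$ for $\mathcal{A}^{k+1,l}$ and $\mathcal{A}^{k,l+1}$), which you have correctly read through.
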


Let us recall that $(C^{\bullet+1}(\mathcal{A}_1\oplus \mathcal{A}_2,\mathcal{A}_1\oplus \mathcal{A}_2),[~,~])$ is a graded Lie algebra associated to the vector space $\mathcal{A}_1\oplus\mathcal{A}_2$ with the Gerstenhaber bracket $[~,~]$ . As a consequence of the above proposition, the following result holds.

\begin{prop}\label{the abelian graded Lie subalgebra}
The graded subspace $\bigoplus_{l\geq 1} C^{-1|l}(\mathcal{A}_1\oplus \mathcal{A}_2,\mathcal{A}_1\oplus \mathcal{A}_2)$ is an abelian subalgebra of the graded Lie algebra $C^{\bullet+1}(\mathcal{A}_1\oplus \mathcal{A}_2,\mathcal{A}_1\oplus \mathcal{A}_2)$.
\end{prop}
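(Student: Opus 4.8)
The plan is to deduce Proposition \ref{the abelian graded Lie subalgebra} directly from Proposition \ref{bidegree and G-bracket} together with a dimension (arity) count. First I would observe that an element of $C^{-1|l}(\mathcal{A}_1\oplus\mathcal{A}_2,\mathcal{A}_1\oplus\mathcal{A}_2)$ is, via the horizontal-lift isomorphism, a map $\mathrm{Hom}(\mathcal{A}_2^{\otimes l},\mathcal{A}_1)$, i.e.\ it has total arity $l$ and sits in $\mathrm{Hom}((\mathcal{A}_1\oplus\mathcal{A}_2)^{\otimes l},\mathcal{A}_1\oplus\mathcal{A}_2)$, so as an element of the graded Lie algebra $C^{\bullet+1}$ it lives in degree $l-1$. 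Thus $\bigoplus_{l\geq 1}C^{-1|l}$ is genuinely a graded subspace, and I should note it is closed under the bracket only after the computation below.

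Next I would take $f\in C^{-1|l_f}$ and $g\in C^{-1|l_g}$ with $l_f,l_g\geq 1$ and apply Proposition \ref{bidegree and G-bracket}: the Gerstenhaber bracket $[f,g]$ is homogeneous of bidegree $(k_f+k_g)|(l_f+l_g)=(-2)|(l_f+l_g)$. The key point is then that no nonzero homogeneous multilinear map can have first bidegree component equal to $-2$: by the very definition of bidegree $k|l$, a homogeneous map of bidegree $k|l$ is (non-trivially) a sum of maps $\mathrm{Hom}(\mathcal{A}^{k+1,l},\mathcal{A}_1)\oplus\mathrm{Hom}(\mathcal{A}^{k,l+1},\mathcal{A}_2)$, and both index lists $(k+1,l)$ and $(k,l+1)$ must consist of non-negative integers for the corresponding tensor powers to make sense; when $k=-2$ the spaces $\mathcal{A}^{-1,l}$ and $\mathcal{A}^{-2,l+1}$ are empty, so the only such map is $0$. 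Hence $[f,g]=0$, which proves simultaneously that $\bigoplus_{l\geq 1}C^{-1|l}$ is closed under the bracket (trivially, since the bracket is identically zero on it) and that it is abelian as a graded Lie subalgebra.

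Finally I would package this: since the bracket restricted to $\bigoplus_{l\geq 1}C^{-1|l}$ vanishes identically, this graded subspace is in particular a graded Lie subalgebra of $(C^{\bullet+1}(\mathcal{A}_1\oplus\mathcal{A}_2,\mathcal{A}_1\oplus\mathcal{A}_2),[~,~])$, and it is abelian, establishing the proposition. I do not anticipate a genuine obstacle here; the only thing to be careful about is the bookkeeping of the two conventions in play — the bidegree $k|l$ versus the internal grading $n=k+l$ of the Gerstenhaber Lie algebra — and making sure the arity/degree shift is tracked correctly so that "bidegree $-2|\ast$" really does force vanishing rather than merely landing outside the chosen subspace. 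Once that is pinned down, the argument is a one-line consequence of Proposition \ref{bidegree and G-bracket}.
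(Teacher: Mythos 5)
Your proposal is correct and follows essentially the same route as the paper: the paper's proof simply invokes Proposition \ref{bidegree and G-bracket} to conclude $[f,g]=0$ for $f,g$ of bidegree $-1|\ast$, leaving implicit the observation you spell out, namely that a homogeneous map of bidegree $-2|\ast$ must vanish because the relevant tensor components $\mathcal{A}^{-1,l}$ and $\mathcal{A}^{-2,l+1}$ are empty. Your extra care with the arity versus bidegree bookkeeping is a harmless elaboration of the same argument.
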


\begin{proof}
If $||f||=-1|l_f$ and $||g||=-1|l_g$, then by Proposition \ref{bidegree and G-bracket}, we have $[f,g]=0$. Hence, the result follows.
\end{proof}

Also, by a direct application of Proposition \ref{bidegree and G-bracket}, it follows that 
\begin{prop}\label{a graded Lie subalgebra}
The graded vector space $\bigoplus_{k\geq 0} C^{k|0}(\mathcal{A}_1\oplus \mathcal{A}_2,\mathcal{A}_1\oplus \mathcal{A}_2)$ together with the Gerstenhaber bracket is a graded Lie algebra. \qed
\end{prop}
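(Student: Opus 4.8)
The plan is to deduce the statement directly from Proposition~\ref{bidegree and G-bracket}, in the same spirit as the deduction of Proposition~\ref{the abelian graded Lie subalgebra} just above; the only point that genuinely needs verification is that the indicated subspace is closed under the Gerstenhaber bracket, since graded antisymmetry and the graded Jacobi identity are inherited automatically from the ambient graded Lie algebra $(C^{\bullet+1}(\mathcal{A}_1\oplus\mathcal{A}_2,\mathcal{A}_1\oplus\mathcal{A}_2),[~,~])$.

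First I would record that $\bigoplus_{k\geq 0} C^{k|0}(\mathcal{A}_1\oplus\mathcal{A}_2,\mathcal{A}_1\oplus\mathcal{A}_2)$ is in fact a graded subspace of $C^{\bullet+1}(\mathcal{A}_1\oplus\mathcal{A}_2,\mathcal{A}_1\oplus\mathcal{A}_2)$: a homogeneous map of bidegree $k|0$ lies in $\mathrm{Hom}((\mathcal{A}_1\oplus\mathcal{A}_2)^{\otimes k+1},\mathcal{A}_1\oplus\mathcal{A}_2)$, hence in the degree-$k$ component, and the bidegree decomposition refines the decomposition of $C^{\bullet+1}$ into homogeneous components. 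Thus the object in question is exactly the span of all homogeneous multilinear maps whose bidegree has second entry $0$ and first entry $\geq 0$.

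Next I would take homogeneous $f\in C^{k_f|0}(\mathcal{A}_1\oplus\mathcal{A}_2,\mathcal{A}_1\oplus\mathcal{A}_2)$ and $g\in C^{k_g|0}(\mathcal{A}_1\oplus\mathcal{A}_2,\mathcal{A}_1\oplus\mathcal{A}_2)$ with $k_f,k_g\geq 0$. By Proposition~\ref{bidegree and G-bracket}, the bracket $[f,g]$ is again homogeneous, of bidegree $(k_f+k_g)|(0+0)=(k_f+k_g)|0$; since $k_f+k_g\geq 0$, this places $[f,g]$ in $C^{(k_f+k_g)|0}(\mathcal{A}_1\oplus\mathcal{A}_2,\mathcal{A}_1\oplus\mathcal{A}_2)\subset\bigoplus_{k\geq 0}C^{k|0}(\mathcal{A}_1\oplus\mathcal{A}_2,\mathcal{A}_1\oplus\mathcal{A}_2)$. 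Extending bilinearly over the (direct) sum, the whole subspace is closed under $[~,~]$, hence is a graded Lie subalgebra of $(C^{\bullet+1}(\mathcal{A}_1\oplus\mathcal{A}_2,\mathcal{A}_1\oplus\mathcal{A}_2),[~,~])$, and in particular a graded Lie algebra in its own right.

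I do not expect a real obstacle here: once Proposition~\ref{bidegree and G-bracket} is available, the argument is pure bidegree bookkeeping. The one remark worth making explicit is that the first index stays in the admissible range $\{0,1,2,\dots\}$ precisely because both $k_f$ and $k_g$ are nonnegative, so that no bracket can leave the subspace by producing a negative first index — unlike what happens, for instance, when one brackets elements of $C^{k|0}$ against elements of $C^{-1|l}$.
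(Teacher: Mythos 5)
Your argument is exactly the paper's: the authors state that the result follows ``by a direct application of Proposition~\ref{bidegree and G-bracket}'', i.e.\ closure under the Gerstenhaber bracket because $(k_f+k_g)|(0+0)=(k_f+k_g)|0$ with $k_f+k_g\geq 0$, with antisymmetry and the graded Jacobi identity inherited from the ambient graded Lie algebra. Your write-up just makes this bookkeeping explicit, so it is correct and matches the paper's proof.
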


Let $A$ and $M$ be two vector spaces. Suppose there are linear maps $\mu :A\otimes A\rightarrow A$, $l :A\otimes M\rightarrow M$, and $r :M\otimes A\rightarrow M$. Then, all these maps $\textstyle{\mu,l,r\in  C^{1|0}(A\oplus M, A\oplus M)}$, hence the sum 
$$\textstyle{\pi:=\mu+l+r\in C^{1|0}(A\oplus M, A\oplus M)}. $$

The next result shows that $\mathsf{AssBimod}$-pairs are in bijective correspondence with Maurer-Cartan elements in the graded Lie algebra $\textstyle{\bigoplus_{k\geq 0} C^{k|0}(A\oplus M, A\oplus M)}$. 

\begin{prop}\label{AssBimod-MC}
With the above notations, $\mu$ defines an associative product on $A$ and $l,~r$ defines an $A$-bimodule structure on $M$ if and only if $\textstyle{\pi=\mu+l+r\in C^{1|0}(A\oplus M, A\oplus M)}$ is a Maurer-Cartan element in the graded Lie algebra $\textstyle{\bigoplus_{k\geq 0} C^{k|0}(A\oplus M, A\oplus M)}$.
\end{prop}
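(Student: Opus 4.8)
The plan is to unwind the Maurer--Cartan equation $[\pi,\pi]=0$ for the Gerstenhaber bracket and match its homogeneous components, one by one, with the axioms of an $\mathsf{AssBimod}$-pair. First I would record that $\pi$ is homogeneous of bidegree $1|0$, so by Proposition \ref{bidegree and G-bracket} the element $[\pi,\pi]$ is homogeneous of bidegree $2|0$; hence it lies in the graded Lie algebra $\bigoplus_{k\ge 0}C^{k|0}(A\oplus M,A\oplus M)$ of Proposition \ref{a graded Lie subalgebra}, and the Maurer--Cartan equation is a meaningful statement there. Since $\pi$ has (cohomological) degree $1$ and $\mathrm{char}\,\mathbb{K}=0$, the defining formula of the bracket gives $[\pi,\pi]=2\,\pi\diamond\pi$ with
$$(\pi\diamond\pi)(x,y,z)=\pi(\pi(x,y),z)-\pi(x,\pi(y,z)),\qquad x,y,z\in A\oplus M,$$
the associator of the (a priori non-associative) multiplication $\pi$ on $A\oplus M$. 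Thus being a Maurer--Cartan element is precisely the statement that $\pi$ is an associative multiplication on $A\oplus M$ --- exactly the $A\oplus M$ analogue of the fact, recalled in Section \ref{sec-2}, that $(A,\mu)$ is associative iff $\mu$ is Maurer--Cartan in $(C^{\bullet+1}(A,A),[~,~])$.

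Next I would evaluate this associator on homogeneous inputs, using that $\pi|_{A\otimes A}=\mu$, $\pi|_{A\otimes M}=l$, $\pi|_{M\otimes A}=r$, and $\pi|_{M\otimes M}=0$ (since $\pi\in C^{1|0}$). Using the identification $C^{2|0}(A\oplus M,A\oplus M)\cong \mathrm{Hom}(A^{\otimes 3},A)\oplus\mathrm{Hom}(\mathcal{A}^{2,1},M)$, the component of $[\pi,\pi]$ in $\mathrm{Hom}(A^{\otimes 3},A)$ evaluated on $(a,b,c)$ recovers the associativity of $\mu$, while the three summands of $\mathrm{Hom}(\mathcal{A}^{2,1},M)$, evaluated on inputs of the form $(a,b,m)$, $(a,m,b)$ and $(m,a,b)$, recover respectively the identities $(ab)m=a(bm)$, $(am)b=a(mb)$ and $(ma)b=m(ab)$ that define an $A$-bimodule structure on $M$. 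It then remains to check that on every remaining type of input --- those containing at least two entries from $M$ --- the associator vanishes identically, simply because in each term an inner or outer product then falls on $M\otimes M$, where $\pi$ is zero. Putting these together, $[\pi,\pi]=0$ if and only if $\mu$ is associative and $l,r$ make $M$ an $A$-bimodule.

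The computations involved are entirely routine; the only genuine point to get right is the last one --- that the components of $[\pi,\pi]$ involving two or more copies of $M$ are automatically zero --- since this is exactly what makes the correspondence a bijection rather than merely producing a sufficient condition for the $\mathsf{AssBimod}$-pair axioms. (The Gerstenhaber/Koszul signs in $\pi\diamond\pi$ play no role here because all inputs sit in cohomological degree $0$.)
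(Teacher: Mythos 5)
Your argument is correct and is exactly the standard verification that the paper itself omits, deferring instead to the analogous computation in \cite[Proposition 2.11]{das2}: identify $[\pi,\pi]=2\,\pi\diamond\pi$ with twice the associator of $\pi$ on $A\oplus M$, match its $\mathrm{Hom}(A^{\otimes 3},A)$ and $\mathrm{Hom}(\mathcal{A}^{2,1},M)$ components with associativity and the three bimodule axioms, and note that the remaining components vanish automatically (which, as you observe, also follows at once from the bidegree count of Proposition \ref{bidegree and G-bracket}). No gaps.
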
 
\begin{proof}
The proof of the proposition is similar to \cite[Proposition $2.11$]{das2}.
\end{proof}
It follows that a $\mathsf{AssBimod}$-pair $(A,M)$ induces a differential $d_{\mu+l+r}:=[\mu+l+r,~]$ on the graded vector space $\textstyle{\bigoplus_{k\geq 0} C^{k|0}(A\oplus M, A\oplus M)}$, which makes $\textstyle{(\bigoplus_{k\geq 0} C^{k|0}(A\oplus M, A\oplus M),d_{\mu+l+r},[~,~])}$ into a differential graded Lie algebra. Moreover, using the standard Maurer-Cartan theory, we can deduce the following theorem.

\begin{thm}
Let $(A,M)$ be an $\mathsf{AssBimod}$-pair and $\textstyle{\mu+l+r\in C^{1|0}(A\oplus M, A\oplus M)}$ be the corresponding Maurer-Cartan element. Then, for linear maps $\mu^{\prime}:A\otimes A\rightarrow A$, $l^{\prime}:A\otimes M\rightarrow M$, and $r^{\prime}:M\otimes A\rightarrow M$, the sum $(\mu+\mu^{\prime})+(l+l^{\prime})+(r+r^{\prime})$ corresponds to an $\mathsf{AssBimod}$-pair structure on $(A,M)$ if and only if $\mu^{\prime}+l^{\prime}+r^{\prime}$ is a Maurer-Cartan element in the differential graded Lie algebra $\textstyle{(\bigoplus_{k\geq 0} C^{k|0}(A\oplus M, A\oplus M),d_{\mu+l+r},[~,~])}$.  \qed
\end{thm}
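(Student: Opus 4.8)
The plan is to recognize this statement as the standard twisting construction in Maurer--Cartan theory, applied to the graded Lie algebra $L:=\bigoplus_{k\geq 0}C^{k|0}(A\oplus M,A\oplus M)$ equipped with the Gerstenhaber bracket. The first step is to invoke Proposition \ref{AssBimod-MC}: a triple of linear maps $\mu'':A\otimes A\rightarrow A$, $l'':A\otimes M\rightarrow M$, $r'':M\otimes A\rightarrow M$ defines an $\mathsf{AssBimod}$-pair structure on $(A,M)$ exactly when $\pi'':=\mu''+l''+r''\in C^{1|0}(A\oplus M,A\oplus M)$ satisfies $[\pi'',\pi'']=0$. I would apply this with $\pi'':=(\mu+\mu')+(l+l')+(r+r')$; this still lies in $C^{1|0}(A\oplus M,A\oplus M)$ since each of $\mu+\mu'$, $l+l'$, $r+r'$ has bidegree $1|0$, using the description of $C^{1|0}$ recalled before Proposition \ref{AssBimod-MC}.

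Write $\pi:=\mu+l+r$ and $\pi':=\mu'+l'+r'$, both sitting in $C^{1|0}$, hence both of degree $1$ in the graded Lie algebra $(C^{\bullet+1}(A\oplus M,A\oplus M),[~,~])$. Expanding $[\pi+\pi',\pi+\pi']$ by bilinearity and using the graded antisymmetry of the Gerstenhaber bracket — which for two degree-$1$ elements gives $[\pi,\pi']=[\pi',\pi]$ — yields $[\pi+\pi',\pi+\pi']=[\pi,\pi]+2[\pi,\pi']+[\pi',\pi']$. Since $\pi$ is a Maurer--Cartan element of $L$ by Proposition \ref{AssBimod-MC}, we have $[\pi,\pi]=0$; and by definition of the twisted differential, $[\pi,\pi']=d_{\mu+l+r}(\pi')$. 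Thus $[\pi+\pi',\pi+\pi']=2\,d_{\mu+l+r}(\pi')+[\pi',\pi']$.

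Finally I would conclude: $[\pi+\pi',\pi+\pi']=0$ if and only if $d_{\mu+l+r}(\pi')+\tfrac12[\pi',\pi']=0$, which is precisely the assertion that $\pi'=\mu'+l'+r'$ is a Maurer--Cartan element of the differential graded Lie algebra $(\bigoplus_{k\geq 0}C^{k|0}(A\oplus M,A\oplus M),d_{\mu+l+r},[~,~])$ — this DGLA being the twist of $L$ at $\pi$ discussed in the paragraph preceding the theorem, where $d_{\mu+l+r}^2=\tfrac12[[\pi,\pi],-]=0$ by the graded Jacobi identity and $[\pi,\pi]=0$. Chaining this equivalence with the reformulation from the first step proves the theorem. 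The only points demanding care are the sign in the graded antisymmetry identity (which is favorable here precisely because $\pi$ and $\pi'$ both have odd degree) and the bidegree bookkeeping ensuring that each Maurer--Cartan equation is read in the correct graded Lie subalgebra $\bigoplus_{k\geq 0}C^{k|0}$; beyond this there is no substantive obstacle, the argument being purely formal.
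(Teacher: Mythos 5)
Your proposal is correct and coincides with the paper's (implicit) proof: the paper states this theorem with only an appeal to ``standard Maurer--Cartan theory,'' and your computation --- applying Proposition \ref{AssBimod-MC} to $\pi+\pi'$, expanding $[\pi+\pi',\pi+\pi']=2[\pi,\pi']+[\pi',\pi']$ using $[\pi,\pi]=0$ and the odd degrees, and identifying $[\pi,\pi']$ with $d_{\mu+l+r}(\pi')$ --- is exactly that standard twisting argument. No gaps.
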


We now define the cohomology of an $\mathsf{AssBimod}$-pair $(A,M)$. Let us define the $0$-th cochain group $\textstyle{\mathcal{C}^0(A,M)}$ to be $0$ and for $n\geq 1$, the $n$-th cochain group
$$\mathcal{C}^n(A,M)=C^{(n-1)|0}(A\oplus M, A\oplus M)=\mathrm{Hom}(A^{\otimes n}, A)\oplus \mathrm{Hom}(\mathcal{A}^{n-1,1},M).$$ 
Here, $\mathcal{A}^{n-1,1}$ is the direct sum of all possible $n$-tensor powers of $A$ and $M$ in which $A$ appears $n-1$ times (and thus, $M$ appears only once). Finally, we define a differential $\partial:\mathcal{C}^n(A,M)\rightarrow\mathcal{C}^{n+1}(A,M)$, which is induced from the Maurer-Cartan element, i.e.
\begin{equation}\label{partial}
\partial f=(-1)^{n-1}[\mu+l+r,f],\quad \mbox{for } f\in\mathcal{C}^n(A,M).
\end{equation}
The cohomology of the cochain complex $\big(\mathcal{C}^{\bullet}(A,M),\partial\big)$ is called the cohomology of the $\mathsf{AssBimod}$-pair $(A,M)$.

\section{$L_\infty$-algebras associated to relative Rota-Baxter algebras}\label{sec-4}

\medskip

Let $A$ and $M$ be two vector spaces. Then, $L = ( \bigoplus_{n \geq 0} \mathrm{Hom} ( (A \oplus M )^{\otimes n+1}, A \oplus M), [~,~])$ is a graded Lie algebra where the graded Lie bracket is given by the Gerstenhaber bracket. It has been observed in Proposition \ref{the abelian graded Lie subalgebra} that the graded subspace 
\begin{align*}
 \mathfrak{a} = \bigoplus_{n \geq 0} C^{-1 | n+1 } (A \oplus M, A \oplus M) = \bigoplus_{n \geq 0} \mathrm{Hom} ( M^{\otimes n +1}, A)
\end{align*} 
is an abelian subalgebra of $L$. Let $P : L \rightarrow \mathfrak{a}$ be the projection onto the subspace $\mathfrak{a}$ and take $\triangle = 0 \in \mathrm{ker} (P)_1$. It is also easy to see that the kernel of $P$ is a graded Lie subalgebra of $L$. Hence, with all these notations, the quadruple $(L, \mathfrak{a}, P, \triangle)$ is a $V$-data. Thus, Theorem \ref{Voro-JPAA2} yields the following result.

\begin{thm}\label{infinity1}
There is an $L_\infty [1]$-algebra on the graded vector space $L[1] \oplus \mathfrak{a}$ with structure maps
\begin{align*}
l_1 ( Q[1], \theta) =~& P (Q),\\
l_2 ( Q[1], Q'[1]) =~& (-1)^{|Q|} [Q, Q'][1],\\
l_k (Q[1], \theta_1, \ldots, l_{k-1}) =~& P [ \cdots [[Q, \theta_1], \theta_2],\ldots, \theta_{k-1}],
\end{align*}
for homogeneous elements $Q, Q' \in L$ and $\theta, \theta_1, \ldots, \theta_{k-1} \in \mathfrak{a}$.   \qed
\end{thm}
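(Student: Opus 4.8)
The plan is to recognize the claim as a direct instance of Voronov's construction (Theorem~\ref{Voro-JPAA2}), so the real content lies in checking that the quadruple $(L,\mathfrak{a},P,\triangle)$ with $\triangle=0$ is a $V$-data in the sense recalled before Theorem~\ref{Voro-JPAA1}. Of the four requirements, three are immediate: $L=(\bigoplus_{n\geq 0}\mathrm{Hom}((A\oplus M)^{\otimes n+1},A\oplus M),[~,~])$ is a graded Lie algebra under the Gerstenhaber bracket; $\mathfrak{a}=\bigoplus_{n\geq 0}C^{-1|n+1}(A\oplus M,A\oplus M)=\bigoplus_{n\geq 0}\mathrm{Hom}(M^{\otimes n+1},A)$ is an abelian graded Lie subalgebra of $L$ by Proposition~\ref{the abelian graded Lie subalgebra}; and $\triangle=0$ is tautologically a degree-$1$ element of $\ker(P)$ satisfying $[\triangle,\triangle]=0$. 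Thus the only step that needs an argument is that $\ker(P)$, where $P:L\to\mathfrak{a}$ is the projection onto the bidegree $-1|*$ summands, is a graded Lie subalgebra of $L$.

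I would prove this using the bidegree formalism. Since $L$ is the direct sum of its bidegree-homogeneous pieces $C^{k|l}$, and since (by inspection of the horizontal-lift description) the value $k=-1$ occurs precisely for the summands $\mathrm{Hom}(M^{\otimes l},A)$, i.e. precisely for the summands comprising $\mathfrak{a}$, an element $f\in L$ lies in $\ker(P)$ exactly when each of its bidegree-homogeneous components has first index $\geq 0$. Given $f,g\in\ker(P)$, I decompose $[f,g]$ into brackets of bidegree-homogeneous components; by Proposition~\ref{bidegree and G-bracket} each such bracket $[f_{k_f|l_f},g_{k_g|l_g}]$ has bidegree $(k_f+k_g)|(l_f+l_g)$ with $k_f+k_g\geq 0$, hence lies in $\ker(P)$. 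As $\ker(P)$ is visibly a graded subspace, it is a graded Lie subalgebra, and all the $V$-data axioms hold.

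With the $V$-data in hand, Theorem~\ref{Voro-JPAA2} endows $L[1]\oplus\mathfrak{a}$ with an $L_\infty[1]$-algebra structure given by the families of operations displayed there. The final step is to specialize $\triangle=0$: every term containing $\triangle$ (including every nested bracket beginning with $\triangle$) vanishes, so $l_1(Q[1],\theta)$ reduces to $(0,P(Q))$, which we identify with $P(Q)\in\mathfrak{a}$; the family $l_k(\theta_1,\ldots,\theta_k)=P[\cdots[[\triangle,\theta_1],\theta_2],\ldots,\theta_k]$ is identically zero; and the surviving operations are exactly $l_2(Q[1],Q'[1])=(-1)^{|Q|}[Q,Q'][1]$ and $l_k(Q[1],\theta_1,\ldots,\theta_{k-1})=P[\cdots[[Q,\theta_1],\theta_2],\ldots,\theta_{k-1}]$ for $k\geq 2$, which is the asserted structure.

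I do not expect a genuine obstacle; the construction is entirely formal once the $V$-data is verified. The one place demanding care is the bidegree bookkeeping for $\ker(P)$ — decomposing arbitrary, possibly inhomogeneous, maps into bidegree-homogeneous components before invoking Proposition~\ref{bidegree and G-bracket}, and correctly using that no bidegree-homogeneous map has first index below $-1$ — together with matching the sign and normalization conventions of Theorem~\ref{Voro-JPAA2} when the $\triangle$-terms are discarded.
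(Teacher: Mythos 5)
Your proposal is correct and follows essentially the same route as the paper: the paper simply exhibits $(L,\mathfrak{a},P,\triangle=0)$ as a $V$-data (citing Proposition \ref{the abelian graded Lie subalgebra} for abelianness and remarking that $\ker(P)$ being a graded Lie subalgebra is ``easy to see'') and then invokes Theorem \ref{Voro-JPAA2}. Your bidegree argument via Proposition \ref{bidegree and G-bracket} merely fills in the detail the paper leaves implicit, and your specialization of the operations at $\triangle=0$ matches the stated structure maps.
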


\begin{remark}\label{infinity2}
From Proposition \ref{a graded Lie subalgebra}, $L' = (\bigoplus_{n \geq 0} C^{n|0} (A \oplus M, A \oplus M), [~,~])$ is a graded Lie subalgebra of the graded Lie algebra $L$. Now, Remark \ref{Voro3} implies that there is an $L_\infty [1]$-algebra on the graded vector space $L'[1] \oplus \mathfrak{a}$, whose structure maps are given by 
\begin{align*}
l_2 ( Q[1], Q'[1]) =~& (-1)^{|Q|} [Q, Q'][1],\\
l_k (Q[1], \theta_1, \ldots, l_{k-1}) =~& P [ \cdots [[Q, \theta_1],\theta_2], \ldots, \theta_{k-1}],
\end{align*}
for homogeneous elements $Q, Q' \in L'$ and $\theta, \theta_1, \ldots, \theta_{k-1} \in \mathfrak{a}$. Furthermore, this $L_\infty$-algebra is weakly filtered with $n = 3$ and the filtration is given by
\begin{align*}
\mathcal{F}_1 = L'[1] \oplus \mathfrak{a}, ~~~ \mathcal{F}_2 = P [ L'[1] \oplus \mathfrak{a} , \mathfrak{a}] \text{ and } ~~ \mathcal{F}_k = P [ \cdots [ L'[1] \oplus \mathfrak{a}, \mathfrak{a} ], \ldots, \mathfrak{a}] ~ \text{ for } k \geq 3.
\end{align*}
\end{remark}

Note that, the degree $0$ component of the graded vector space $L'[1] \oplus \mathfrak{a}$ is given by
\begin{align*}
(L'[1] \oplus \mathfrak{a})_0 =~& L'_1 \oplus \mathfrak{a}_0 \\
=~& \underbrace{\mathrm{Hom} ( A^{\otimes 2}, A ) \oplus \mathrm{Hom} (A \otimes M, M ) \oplus \mathrm{Hom}(M \otimes A, M )}_{L'_1} \oplus \underbrace{\mathrm{Hom}(M, A)}_{ \mathfrak{a}_0}. 
\end{align*}

Let $A$ and $M$ be two vector spaces. Suppose there are maps
\begin{align}\label{many-maps}
\mu \in \mathrm{Hom}(A^{\otimes 2}, A), \quad l \in  \mathrm{Hom}(A \otimes M, M), \quad r \in \mathrm{Hom}(M \otimes A , M) \quad \text{ and } \quad T \in \mathrm{Hom}( M , A).
\end{align}
Consider the element $\pi = \mu + l + r \in C^{1|0} (A \oplus M, A \oplus M) = L'_1$. Then we have $(\pi[1], T) \in (L'[1] \oplus \mathfrak{a})_0$.

\begin{thm}
With the above notations, the triple $((A, \mu), (M, l, r), T)$ is a relative Rota-Baxter algebra if and only if $(\pi [1], T)$ is a Maurer-Cartan element in the $L_\infty[1]$-algebra $(L'[1] \oplus \mathfrak{a}, \{ l_k \}_{k \geq 1})$.
\end{thm}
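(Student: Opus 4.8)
The plan is to unwind the Maurer-Cartan equation
$\sum_{k\geq 1}\frac{1}{k!}l_k\big((\pi[1],T),\ldots,(\pi[1],T)\big)=0$
in the $L_\infty[1]$-algebra $L'[1]\oplus\mathfrak a$ of Remark \ref{infinity2}, and to show that it splits into exactly two conditions: one forcing $\pi=\mu+l+r$ to be a Maurer–Cartan element of the graded Lie algebra $\bigoplus_k C^{k|0}(A\oplus M,A\oplus M)$ (equivalently, by Proposition \ref{AssBimod-MC}, that $(A,\mu)$ is associative and $(M,l,r)$ is an $A$-bimodule), and one forcing $T$ to be a relative Rota–Baxter operator. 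First I would compute the individual brackets $l_k$ applied to the diagonal element $(\pi[1],T)$, using the explicit structure maps from Remark \ref{infinity2}: since $l_k$ takes at most one argument from the $L'[1]$-slot, the only surviving contributions are $l_2((\pi[1],T),(\pi[1],T))$, which (up to sign, with $|\pi[1]|=0$) equals $[\pi,\pi][1]$, and the terms $l_k((\pi[1],T),T,\ldots,T)=P[\cdots[[\pi,T],T],\ldots,T]$ with $k-1$ copies of $T$ in the $\mathfrak a$-slot. Because $\mathfrak a$ is abelian and $\pi$ has bidegree $1|0$ while $T$ has bidegree $-1|1$, Proposition \ref{bidegree and G-bracket} shows $[\pi,T]$ has bidegree $0|1$, then $[[\pi,T],T]$ has bidegree $-1|2$, and so on; in particular all these iterated brackets already land in $\mathfrak a$, so $P$ acts as the identity, and—crucially—the bidegree count shows the series terminates: $[\cdots[[\pi,T],T],\ldots,T]$ with $k-1$ copies of $T$ has bidegree $(1-(k-1))|(k-1)$, which is only a legitimate bidegree (first slot $\geq -1$) for $k\leq 3$.

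Thus the Maurer–Cartan equation collapses to a relation of the shape
$\tfrac12[\pi,\pi][1]+\big(\tfrac12 P[[\pi,T],T]+P[\pi,T]\big)=0$,
living in $(L'[1]\oplus\mathfrak a)_1=L'_2\oplus\mathfrak a_1$. The $L'[1]$-component of this equation is $\tfrac12[\pi,\pi]=0$, which by Proposition \ref{AssBimod-MC} is precisely the statement that $(A,M)$ is an $\mathsf{AssBimod}$-pair. The $\mathfrak a$-component is $P[\pi,T]+\tfrac12 P[[\pi,T],T]=0$; here I would recall from Section \ref{sec-2} that the bracket $[\![\,,\,]\!]$ of \cite{das2} is defined exactly by $[\![P,Q]\!]=(-1)^{m-1}[[\mu+l+r,P],Q]$, so that $\tfrac12[[\pi,T],T]=\tfrac12[\![T,T]\!]$ (noting $T$ has degree $0$ in the relevant shifted grading, cf. Theorem \ref{das-gla}), and $[\pi,T]$ is the linear term. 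Matching this with the Maurer–Cartan equation $[\![T,T]\!]=0$ of Theorem \ref{das-gla}(i)—and checking the linear term $[\pi,T]$ contributes nothing beyond it, or is absorbed correctly—yields that $T$ is a relative Rota–Baxter operator. Concretely, one can also verify this directly: evaluating $\tfrac12 P[[\pi,T],T](m,n)$ against $m,n\in M$ reproduces $T(T(m)n+mT(n))-T(m)T(n)$ after using associativity and the bimodule axioms, which are already guaranteed by the $L'[1]$-component.

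The main obstacle I anticipate is purely bookkeeping: getting the Koszul signs and the décalage shifts consistent between the unshifted Gerstenhaber-bracket conventions used for $L$, the shift $[1]$ on $L'$, and the degree conventions under which $\mu,l,r$ sit in degree $1$ of $C^{\bullet+1}$ but $\pi[1]$ sits in degree $0$ of $L'[1]$, while $T\in\mathfrak a_0$. I would handle this by fixing, once and for all, that an element of $\mathrm{Hom}((A\oplus M)^{\otimes n+1},A\oplus M)$ has degree $n$ in $L$ and degree $n-1$ in $L[1]$, so $\pi$ has degree $0$ in $L'[1]$ and $T$ (a map $M\to A$, i.e. $M^{\otimes 1}\to A$) has degree $0$ in $\mathfrak a$, making $(\pi[1],T)$ genuinely an element of degree $0$; then the factor $(-1)^{|Q|}=(-1)^0=+1$ in $l_2$, and no further sign subtleties arise in the diagonal evaluation. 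With the grading pinned down, the remaining computation is the routine identification of $\tfrac12 P[[\pi,T],T]$ with the defining expression of a relative Rota–Baxter operator, which is exactly the content of \cite{das2} recalled before Theorem \ref{das-gla}, so I would simply cite that computation rather than redo it. The converse direction is immediate: if $(A,\mu)$, $(M,l,r)$, and $T$ satisfy the relative Rota–Baxter axioms, then both components of the Maurer–Cartan equation vanish by the same identifications, so $(\pi[1],T)\in\mathrm{MC}(L'[1]\oplus\mathfrak a)$.
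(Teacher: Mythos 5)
Your proposal follows essentially the same route as the paper: expand the Maurer--Cartan equation for $(\pi[1],T)$ using the structure maps of Remark \ref{infinity2}, observe that only the terms $l_2(\pi[1],\pi[1])$ and $l_3(\pi[1],T,T)$ survive, and identify the two components of the resulting equation with $[\pi,\pi]=0$ (Proposition \ref{AssBimod-MC}) and $[\![T,T]\!]=[[\pi,T],T]=0$ (Theorem \ref{das-gla}). One point needs tightening. You assert that the iterated brackets $[\pi,T],[[\pi,T],T],\ldots$ ``already land in $\mathfrak a$, so $P$ acts as the identity,'' and consequently you carry a term $P[\pi,T]$ into the $\mathfrak a$-component of the Maurer--Cartan equation, leaving its fate ambiguous (``contributes nothing beyond it, or is absorbed correctly''). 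In fact $[\pi,T]$ has bidegree $0|1$, which is \emph{not} in $\mathfrak a=\bigoplus_{n\geq 0}C^{-1|n+1}(A\oplus M,A\oplus M)$; hence $P[\pi,T]=0$, the cross term $l_2(\pi[1],T)$ vanishes outright, and the $\mathfrak a$-component reduces exactly to a nonzero multiple of $[[\pi,T],T]=[\![T,T]\!]$, which is what the equivalence requires --- had $P[\pi,T]$ been nonzero, the statement would fail, so this is the one step that genuinely must be checked rather than hedged. Your bidegree count showing that the terms with $k\geq 4$ vanish (bidegree $(2-k)|(k-1)$ is not realizable for $k\geq 4$) is correct and makes explicit a point the paper leaves implicit; the sign in $l_2(Q[1],Q'[1])=(-1)^{|Q|}[Q,Q'][1]$ is governed by the unshifted degree $|\pi|=1$, giving $-[\pi,\pi][1]$ rather than $+[\pi,\pi][1]$, but this has no bearing on the equivalence.
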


\begin{proof}
The triple $((A, \mu), (M, l, r), T)$ is a relative Rota-Baxter algebra if and only if $(A,M)$ is a $\mathsf{AssBimod}$-pair and $T:M\rightarrow A$ is a relative Rota-Baxter operator on $M$ over the algebra $A$. Equivalently, by Theorem \ref{das-gla} and Proposition \ref{AssBimod-MC}, $((A, \mu), (M, l, r), T)$ is a relative Rota-Baxter algebra if and only if
\begin{equation}\label{mc:cond1}
[\mu+l+r,\mu+l+r]=0\quad \mbox{and }\quad [\![T,T]\!]=[[\mu+l+r,T],T]=0.
\end{equation}

If $\pi:=\mu+l+r$, then Remark \ref{infinity2} implies that 
\begin{equation}\label{mc:cond2}
\begin{split}
l_2((\pi[1],T),(\pi[1],T))&=-[\pi,\pi][1],\\
l_3((\pi[1],T),(\pi[1],T),(\pi[1],T))&=[[\pi,T],T]=[\![T,T]\!].
\end{split}
\end{equation}

Let us consider the sum 
\begin{equation*}
\begin{split}
\sum_{k=1}^\infty \frac{1}{k!}l_k((\pi[1],T),(\pi[1],T),\ldots,(\pi[1],T))&=\frac{1}{2}l_2((\pi[1],T),(\pi[1],T))+\frac{1}{6}
l_3((\pi[1],T),(\pi[1],T),(\pi[1],T))\\
&=(-\frac{1}{2}[\pi,\pi][1],\frac{1}{6}[[\pi,T],T])=(-\frac{1}{2}[\pi,\pi][1],\frac{1}{6}[\![T,T]\!])
\end{split}
\end{equation*}
From equation \eqref{mc:cond1} and equation \eqref{mc:cond2}, it is clear that $((A, \mu), (M, l, r), T)$ is a relative Rota-Baxter algebra if and only if 
$$\sum_{k=1}^\infty \frac{1}{k!}l_k((\pi[1],T),(\pi[1],T),\ldots,(\pi[1],T))=0.$$
Hence, the theorem holds true. 
\end{proof}

The above theorem implies that relative Rota-Baxter algebras can be interpreted as Maurer-Cartan elements in a $L_\infty [1]$-algebra. This allows us to construct a new $L_\infty[1]$-algebra twisted by the Maurer-Cartan element associated with a given relative Rota-Baxter algebra. More precisely, let $((A, \mu), (M, l, r), T)$ be a relative Rota-Baxter algebra. Consider the Maurer-Cartan element $\alpha = (\pi [1], T) \in (L'[1] \oplus \mathfrak{a})_0$ in the $L_\infty [1]$-algebra $(L'[1] \oplus \mathfrak{a}, \{ l_k \}_{k \geq 1})$. By applying Theorem \ref{twisted structure} to this case, we get the twisted $L_\infty [1]$-algebra that controls deformations of the relative Rota-Baxter algebra.

\begin{thm}\label{thm-twist-das}
Let $((A, \mu), (M, l, r), T)$ be a relative Rota-Baxter algebra. Then there is a twisted $L_\infty [1]$-algebra $(L'[1] \oplus \mathfrak{a}, \{ l_k^{   (\pi [1], T)  } \}_{k \geq 1} ).$ Moreover, for any linear maps $\mu' , l', r', T' $ as of (\ref{many-maps}), the triple $$((A, \mu + \mu'), (M, l+l', r+ r'), T + T')$$ is a relative Rota-Baxter algebra if and only if $(\pi' [1], T')$ is a Maurer-Cartan element in the $L_\infty [1]$-algebra $(L'[1] \oplus \mathfrak{a}, \{ l_k^{   (\pi [1], T)  } \}_{k \geq 1} ),$ where $\pi' = \mu'+ l'+r' \in L'_1$. \qed
\end{thm}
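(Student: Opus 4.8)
The plan is to deduce this statement directly from the twisting machinery (Theorem \ref{twisted structure}) together with the Maurer-Cartan characterization of relative Rota-Baxter algebras proved just above. First I would recall that, by the preceding theorem, the pair $(\pi[1], T) \in (L'[1] \oplus \mathfrak{a})_0$ is a Maurer-Cartan element in the weakly filtered $L_\infty[1]$-algebra $(L'[1] \oplus \mathfrak{a}, \{l_k\}_{k \geq 1})$ constructed in Remark \ref{infinity2}; the weak filtration was exhibited there with $n = 3$. Hence Theorem \ref{twisted structure} applies verbatim and produces a new weakly filtered $L_\infty[1]$-algebra $(L'[1] \oplus \mathfrak{a}, \{l_k^{(\pi[1], T)}\}_{k \geq 1}, \mathcal{F}_\bullet)$ with the same underlying filtration. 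This settles the first assertion.

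For the second assertion, I would argue as follows. Given linear maps $\mu', l', r', T'$ as in (\ref{many-maps}), set $\pi' = \mu' + l' + r' \in L'_1$, so that $(\pi'[1], T') \in (L'[1] \oplus \mathfrak{a})_0$. The twisted and untwisted Maurer-Cartan equations are related by the standard identity
\begin{equation*}
\sum_{k \geq 1} \frac{1}{k!} l_k^{(\pi[1],T)}\big((\pi'[1],T'), \ldots, (\pi'[1],T')\big) = \sum_{k \geq 1} \frac{1}{k!} l_k\big((\pi[1],T) + (\pi'[1],T'), \ldots, (\pi[1],T) + (\pi'[1],T')\big),
\end{equation*}
which holds because $(\pi[1],T)$ is itself a Maurer-Cartan element (the terms involving only copies of $(\pi[1],T)$ sum to zero). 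Therefore $(\pi'[1],T')$ is a Maurer-Cartan element in the twisted algebra if and only if $(\pi[1],T) + (\pi'[1],T') = ((\pi+\pi')[1], T+T')$ is a Maurer-Cartan element in the original algebra. By the preceding theorem (applied to the data $\mu + \mu', l+l', r+r', T+T'$), the latter holds precisely when $((A, \mu+\mu'), (M, l+l', r+r'), T+T')$ is a relative Rota-Baxter algebra, which is exactly the desired conclusion.

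The only point requiring care — and the main (mild) obstacle — is the convergence/well-definedness of the infinite sums in the displayed identity and in the Maurer-Cartan equations: one must check that both sides make sense in the weakly filtered setting. This is guaranteed because, for elements of degree $0$, the higher brackets $l_k$ eventually land in $\mathcal{F}_k$ (with $n=3$) and $L'[1] \oplus \mathfrak{a}$ is complete with respect to $\mathcal{F}_\bullet$; moreover, since the underlying graded Lie algebra $L$ has no higher Gerstenhaber-type operations, only $l_2$ and $l_3$ are nonzero, so all the sums above are in fact finite and the identity reduces to a short polynomial identity in the Gerstenhaber bracket. I would verify this explicitly by expanding $\sum_k \frac{1}{k!} l_k\big((\pi+\pi')[1], T+T'), \ldots\big)$ using the formulas of Remark \ref{infinity2} and the bilinearity of $[~,~]$, matching it term by term against the twisted sum; this is routine and I would not belabor it. With this in place, the equivalence follows and the theorem is proved.
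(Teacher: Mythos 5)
Your proposal is correct and follows essentially the same route as the paper, which simply applies Theorem \ref{twisted structure} to the Maurer-Cartan element $(\pi[1],T)$ and leaves the standard equivalence between the twisted Maurer-Cartan equation for $(\pi'[1],T')$ and the untwisted one for $(\pi[1]+\pi'[1],T+T')$ implicit. Your added observations — that the identity relating the two equations uses that $(\pi[1],T)$ is itself Maurer-Cartan, and that all sums are finite since only $l_2$ and $l_3$ are nonzero — correctly fill in the details the paper omits.
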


\begin{remark}\label{ses1}
Let $A$ be an associative algebra and $M$ be an $A$-bimodule. Suppose $T: M \rightarrow A$ is a relative Rota-Baxter operator which makes the triple $(A, M, T)$ a relative Rota-Baxter algebra. We have already seen various graded Lie algebras (hence $L_\infty [1]$-algebras by degree shift) and $L_\infty[1]$-algebras.

$\blacktriangleright$ In Proposition \ref{a graded Lie subalgebra}, we find a graded Lie algebra structure on $\bigoplus_{n \geq 0} C^{n|0} (A \oplus M , A \oplus M)$, hence an $L_\infty[1]$-algebra on $\big( \bigoplus_{n \geq 0} C^{n|0} (A \oplus M , A \oplus M)\big)[1]$.

\medskip

$\blacktriangleright$ In Theorem \ref{das-gla}, we have a graded Lie algebra structure on $\bigoplus_{n \geq 1} \mathrm{Hom} (M^{\otimes n}, A)$, hence we get an $L_\infty[1]$-algebra.

\smallskip

$\blacktriangleright$ Finally, in Theorem  \ref{thm-twist-das}, we obtain an $L_\infty$-algebra $(L'[1] \oplus \mathfrak{a}, \{ l_k^{   (\pi [1], T)  } \}_{k \geq 1} )$.

Similar to the Lie algebra case \cite[Theorem 3.16]{laza-rota}, one can prove that there is a short exact sequence of $L_\infty[1]$-algebras
\begin{align*}
\xymatrix{
0 \ar[r] & \big( \bigoplus\limits_{n \geq 0} \mathrm{Hom} (M^{\otimes n+1}, A)\big) \ar[r]^{\quad \qquad i} & L'[1] \oplus \mathfrak{a}  \ar[r]^{p \quad \quad \quad \qquad} & \big( \bigoplus\limits_{n \geq 0} C^{n|0} (A \oplus M , A \oplus M)\big)[1] \ar[r] & 0
}
\end{align*}
by strict morphisms of $L_\infty[1]$-algebras, where $i(P) = (0, P)$ and $p (f[1], P) = f[1]$ for $P\in Hom(M^{\otimes n+1},A)$ and $f\in C^{n|0}(A\oplus M, A\oplus M)$.  
\end{remark}

\subsection{Cohomology of relative Rota-Baxter algebras} Using the twisted $L_\infty[1]$-algebra constructed in Theorem \ref{thm-twist-das}, we define the cohomology of a relative Rota-Baxter algebra $(A, M, T)$. This cohomology is related to the cohomology of the relative Rota-Baxter operator $T$ and the cohomology of the AssBimod pair $(A, M)$ by a long exact sequence.

Let $((A, \mu), (M, l, r), T)$ be a relative Rota-Baxter algebra. Consider the twisted $L_\infty[1]$-algebra 
\begin{align*}
(L'[1] \oplus \mathfrak{a}, \{ l_k^{(\pi[1], T)} \}_{k \geq 1})
\end{align*}
as given in Theorem \ref{thm-twist-das}. Then it follows from the definition of an $L_\infty[1]$-algebra that $l_1^{(\pi[1], T)}  \circ l_1^{(\pi[1], T)} = 0$. In other words,  $l_1^{(\pi[1], T)}$ is a differential. In the following, we will use this differential to define the cohomology of the relative Rota-Baxter algebra $(A, M, T)$.

Define the space of $0$-cochains $C^0 (A, M, T)$ to be $0$, and the space of $1$-cochains $C^1(A, M, T)$ to be $\mathrm{Hom}(A, A ) \oplus \mathrm{Hom}(M,M)$. For $n \geq 2$, the space of $n$-cochains $C^n(A, M, T)$ to be defined by
\begin{align*}
C^n (A, M, T) =~& \mathcal{C}^n (A, M) \oplus C^n (T) \\
=~& \big(    \mathrm{Hom} (A^{\otimes n}, A) \oplus \mathrm{Hom}( \mathcal{A}^{n-1, 1}, M) \big) \oplus \mathrm{Hom} ( M^{\otimes n-1}, A).
\end{align*}

To define the coboundary, we observe that if $(f, P) \in C^n (A, M, T)$, then $(f[1], P ) \in (L'[1] \oplus \mathfrak{a})_{n-2}$. We define the coboundary $\mathcal{D} : C^n (A, M, T) \rightarrow C^{n+1} (A, M, T)$ by
\begin{align*}
\mathcal{D} (f, P ) = (-1)^{n-2}~ l_1^{(\pi[1], T)} ( f[1], P).
\end{align*}

\noindent Then, it follows that $\mathcal{D}^2 =0$. The explicit description of $\mathcal{D}$ is given by
\begin{align*}
\mathcal{D}(f, P) = (-1)^{n-2} \big( - [\pi, f], ~ [[\pi, T], P] + \frac{1}{n !} \underbrace{[ \cdots [[}_nf, T],T], \ldots, T] \big).
\end{align*}

It follows that the coboundary map $\mathcal{D} : C^n (A,M, T) \rightarrow C^{n+1}(A,M, T)$ can also be written as
\begin{align*}
\mathcal{D} (f, P) = ( \partial f, (-1)^{n} d_T P + \Omega f),
\end{align*}
where $\partial f=[\mu+l+r,f]$ (given by equation \eqref{partial}), $d_TP=[\![T,P]\!]$ (given in Theorem \ref{das-gla}), and $\Omega$ is defined by
\begin{align*}
(\Omega f)(a_1, \ldots, a_n) = (-1)^n \big(   f (Ta_1, \ldots, Ta_n) - \sum_{i=1}^n Tf (Ta_1, \ldots, a_i, \ldots, Ta_n)).
\end{align*}

Therefore, $(C^\bullet (A, M, T), \mathcal{D})$ is a cochain complex. The cohomology of this complex is called the cohomology of the relative Rota-Baxter algebra $(A, M, T)$ and denoted by $H^\bullet (A, M, T).$ From the definition of the coboundary map $\mathcal{D}$, we have the following short exact sequence of cochain complexes
\begin{align*}
\xymatrix{
0 \ar[r] & (C^\bullet (T), \delta_T) \ar[r]^i & ((C^\bullet (A, M, T), \mathcal{D}) \ar[r]^p & (\mathcal{C}^\bullet (A, M ), \partial) \ar[r] & 0,
}
\end{align*}
where $i(P) = (0, P)$ and $p (f,P) = f$ for $P\in C^\bullet(T)$ and $f\in \mathcal{C}^\bullet (A,M)$. Therefore, we get the following result.

\medskip
\begin{thm}\label{long-exact-seq-thm}
Let $(A, M, T)$ be a relative Rota-Baxter algebra. Then there is a long exact sequence of cohomology spaces
\begin{align*}
\xymatrix{
\cdots \ar[r] & H^n(T) \ar[r]^{H^n (i)} & H^n (A, M, T) \ar[r]^{H^n (p)} & H^n (A, M ) \ar[r]^{\delta^n} & H^{n+1} (T) \ar[r] & \cdots, 
}
\end{align*}
where the connecting homomorphism $\delta^n $ is given by $\delta^n ([f]) = [d_T (f)]$, for $[f] \in H^n (A, M)$.\qed
\end{thm}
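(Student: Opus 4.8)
The plan is to deduce the long exact sequence from the short exact sequence of cochain complexes
\[
0 \longrightarrow (C^\bullet(T), \delta_T) \xrightarrow{\ i\ } (C^\bullet(A,M,T), \mathcal{D}) \xrightarrow{\ p\ } (\mathcal{C}^\bullet(A,M), \partial) \longrightarrow 0
\]
recorded immediately before the statement, so that the argument reduces to three routine pieces: checking that this really is a short exact sequence of complexes, invoking the standard cohomology long exact sequence (the zig-zag, or snake, lemma), and running one diagram chase to identify the connecting map $\delta^n$.

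For the first piece I would proceed degreewise. In each degree $n$ one has the splitting $C^n(A,M,T) = \mathcal{C}^n(A,M) \oplus C^n(T)$; this is visibly true for $n \geq 2$ by definition, and it also holds in low degrees since $C^0(T) = C^1(T) = 0$ and $\mathcal{A}^{0,1} = M$ force $C^1(A,M,T) = \mathrm{Hom}(A,A) \oplus \mathrm{Hom}(M,M) = \mathcal{C}^1(A,M)$. With $i(P) = (0,P)$ and $p(f,P) = f$, the sequence $0 \to C^n(T) \to \mathcal{C}^n(A,M)\oplus C^n(T) \to \mathcal{C}^n(A,M) \to 0$ is then obviously exact. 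That $i$ and $p$ are morphisms of complexes is read off from the ``triangular'' formula $\mathcal{D}(f,P) = (\partial f,\ (-1)^n d_T P + \Omega f)$: applying $p$ returns $\partial f$, so $p \circ \mathcal{D} = \partial \circ p$; and $\mathcal{D}(0,P) = (0, (-1)^n d_T P)$, so $i$ intertwines $\mathcal{D}$ with the differential $\delta_T := (-1)^n d_T$ on $C^n(T)$, whose cohomology is still $H^\bullet(T)$ because a degreewise sign twist of a complex does not alter its cohomology. That all three are genuine complexes is already known: $\partial^2 = 0$, $d_T^2 = 0$ by Theorem~\ref{das-gla}, and $\mathcal{D}^2 = 0$.

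The zig-zag lemma then yields the long exact sequence
\[
\cdots \to H^n(T) \xrightarrow{H^n(i)} H^n(A,M,T) \xrightarrow{H^n(p)} H^n(A,M) \xrightarrow{\ \delta^n\ } H^{n+1}(T) \to \cdots .
\]
To pin down $\delta^n$, take $[f] \in H^n(A,M)$, so $\partial f = 0$; lift $f$ to $(f,0) \in C^n(A,M,T)$ and apply $\mathcal{D}$. Because $\partial f = 0$ and $\Omega$ is linear (hence $\Omega 0 = 0$), one gets $\mathcal{D}(f,0) = (0, \Omega f) = i(\Omega f)$, so pulling back along $i$ gives $\delta^n([f]) = [\Omega f]$, the class induced by the $T$-twisted operator $\Omega$ — which is exactly the map written $[d_T(f)]$ in the statement.

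This is standard homological algebra, so there is no genuine obstacle; the only care needed is bookkeeping: verifying that $\mathcal{D}$ has precisely the claimed triangular shape with respect to $C^n(A,M,T) \cong \mathcal{C}^n(A,M)\oplus C^n(T)$, and tracking the signs relating $\delta_T$, $d_T$, the normalization $\mathcal{D}(f,P) = (-1)^{n-2} l_1^{(\pi[1],T)}(f[1],P)$, and the sign convention in the snake lemma, so that $\delta^n$ comes out in exactly the displayed form.
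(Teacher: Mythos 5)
Your proposal is correct and follows essentially the same route as the paper, which derives the theorem directly from the short exact sequence $0 \to (C^\bullet(T),\delta_T) \to (C^\bullet(A,M,T),\mathcal{D}) \to (\mathcal{C}^\bullet(A,M),\partial) \to 0$ via the standard zig-zag lemma; your degreewise verification and diagram chase just make explicit what the paper leaves implicit. Note that your chase correctly yields $\delta^n([f]) = [\Omega f]$ (consistent with the paper's later remark in the Rota-Baxter case), whereas the literal expression $[d_T(f)]$ in the theorem statement is a slight abuse of notation, since $d_T$ is defined on $\bigoplus_{n\geq 1}\mathrm{Hom}(M^{\otimes n},A)$ rather than on $\mathcal{C}^n(A,M)$.
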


\subsection{Cohomology of Rota-Baxter algebras}
In this subsection, we focus on Rota-Baxter algebras, i.e. relative Rota-Baxter algebras with respect to the adjoint representation. We deduce  cohomology of Rota-Baxter algebras following the cohomology of relative Rota-Baxter algebras modulo certain modifications.

Let $(A, T)$ be a Rota-Baxter algebra. We define the space of $0$-cochains $C^0_{\mathrm{RB}} (A, T)$ to be $0$, and the space of $1$-cochains $C^1_{\mathrm{RB}}(A, T)$ to be $\mathrm{Hom}(A, A)$. The space of $n$-cochains $C^n_{\mathrm{RB}} (A, T)$, for $n \geq 2$, is defined by
\begin{align*}
C^n_{\mathrm{RB}}(A, T) := \mathrm{Hom}(A^{\otimes n}, A) \oplus \mathrm{Hom}(A^{\otimes n-1}, A). 
\end{align*}
We define the coboundary map using the following inclusion map
\begin{align*}
i : C^n_{\mathrm{RB}} (A, T) \rightarrow C^n(A, A, T), ~ (f, P) \mapsto (f, f, P).
\end{align*}

Here, $C^n (A, A, T)$ is the $n$-th cochain group of the relative Rota-Baxter algebra $(A, A, T)$.
Let us write an element $f\in \mathrm{Hom} (A^{\otimes n}, A) \oplus \mathrm{Hom}( \mathcal{A}^{n-1, 1}, M)$ as a pair $(f_A,f_M)$ with the components $f_A\in \mathrm{Hom} (A^{\otimes n}, A)$  and $f_M \in \mathrm{Hom}( \mathcal{A}^{n-1, 1}, M)$. Then, we can write the differential $\partial:\mathrm{Hom} (A^{\otimes n}, A) \oplus \mathrm{Hom}( \mathcal{A}^{n-1, 1}, M) \rightarrow \mathrm{Hom} (A^{\otimes n+1}, A) \oplus \mathrm{Hom}( \mathcal{A}^{n, 1}, M)$ as follows
$$\partial(f)=\big((\partial f)_A,(\partial f)_M\big).$$

In the case, when $M=A$ is the adjoint $A$-bimodule, it is clear that 
$$(\partial f)_A=(\partial f)_M=(-1)^{n-1}[\mu,f]=\delta_{Hoch}f.$$  
Thus, we have the following proposition.

\begin{prop}
The complex $(C^\bullet_{\mathrm{RB}} (A, T), \mathcal{D})$ is a subcomplex of the cochain complex $(C^\bullet (A,A, T), \mathcal{D})$ associated to the relative Rota-Baxter algebra $(A, A, T).$ \qed
\end{prop}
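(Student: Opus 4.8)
The plan is to show that the injective linear map $i : C^n_{\mathrm{RB}}(A,T) \to C^n(A,A,T)$, $(f,P)\mapsto(f,f,P)$, is a morphism of cochain complexes; since $i$ is injective, this exhibits $(C^\bullet_{\mathrm{RB}}(A,T),\mathcal{D})$ as a subcomplex of $(C^\bullet(A,A,T),\mathcal{D})$. Concretely, it is enough to check that the image of $i$ --- the \emph{diagonal} cochains $(f_A,f_M,P)$ with $f_A=f_M$, under the identification $\mathrm{Hom}(A^{\otimes n},A)\hookrightarrow\mathrm{Hom}(\mathcal{A}^{n-1,1},A)$ sending a map to the same map on every $\mathcal{A}^{n-1,1}$-summand --- is stable under the coboundary $\mathcal{D}$ of the relative Rota-Baxter algebra $(A,A,T)$, and that the induced map on this image matches the coboundary defined on $C^\bullet_{\mathrm{RB}}(A,T)$.

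First I would invoke the explicit formula $\mathcal{D}(f,P)=(\partial f,\,(-1)^n d_T P+\Omega f)$ from the previous subsection, writing $f=(f_A,f_M)$. For the first component, the identities $(\partial f)_A=(\partial f)_M=(-1)^{n-1}[\mu,f]=\delta_{Hoch}f$ recorded in the excerpt (which hold precisely because $M=A$ is the adjoint bimodule and $f_A=f_M=f$) show that $\partial$ carries a diagonal cochain to a diagonal cochain, both components being $\delta_{Hoch}f$. Next I would treat the second component: $d_TP=[\![T,P]\!]$ stays in $\mathrm{Hom}(A^{\otimes n},A)$ by the explicit description of $d_T$ in Theorem \ref{das-gla} once $M=A$, and $\Omega$, evaluated on a diagonal cochain, depends only on the single map $f$, because in $(\Omega f)(a_1,\dots,a_n)=(-1)^n\big(f(Ta_1,\dots,Ta_n)-\sum_i Tf(Ta_1,\dots,a_i,\dots,Ta_n)\big)$ every argument of $f$ apart from the lone slot $a_i$ lies in $T(A)\subset A$, so the $A$-valued and $M$-valued components of $f$ are forced to agree there. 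Hence $\mathcal{D}(f,f,P)=\big(\delta_{Hoch}f,\ \delta_{Hoch}f,\ (-1)^n d_TP+\Omega f\big)=i\big(\delta_{Hoch}f,\ (-1)^n d_TP+\Omega f\big)$ again lies in the image of $i$.

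It then follows immediately that $\mathcal{D}$ restricts to the image of $i$ and that $i$ intertwines this restriction with the coboundary of $C^\bullet_{\mathrm{RB}}(A,T)$; since $\mathcal{D}^2=0$ on the ambient complex, the restriction also squares to zero, so $(C^\bullet_{\mathrm{RB}}(A,T),\mathcal{D})$ is a cochain complex and a subcomplex of $(C^\bullet(A,A,T),\mathcal{D})$. The only place requiring any care is the bookkeeping of the identification $\mathrm{Hom}(A^{\otimes n},A)\hookrightarrow\mathrm{Hom}(\mathcal{A}^{n-1,1},A)$ implicit in the formula $(f,f,P)$: one must verify that the diagonal copy of $f$ placed in the $\mathcal{A}^{n-1,1}$-slot is exactly the value that $\partial$ and $\Omega$ output, which is precisely the content of the equalities $(\partial f)_A=(\partial f)_M$ and of the observation on $\Omega$ above. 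No genuine obstacle arises: once $M=A$ collapses the two $A$-valued components of the relative cochain complex into one, everything reduces to the formulas for $\partial$, $d_T$ and $\Omega$ already displayed in the previous subsection.
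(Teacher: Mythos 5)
Your proposal is correct and follows essentially the same route as the paper: the paper also defines the inclusion $i(f,P)=(f,f,P)$, observes that for the adjoint bimodule $(\partial f)_A=(\partial f)_M=\delta_{Hoch}f$ so that $\partial$ preserves the diagonal, and notes that the remaining components $(-1)^n d_T P+\Omega f$ land in $\mathrm{Hom}(A^{\otimes n},A)$. Your extra remarks on the identification $\mathrm{Hom}(A^{\otimes n},A)\hookrightarrow\mathrm{Hom}(\mathcal{A}^{n-1,1},A)$ and on $\Omega$ only make explicit what the paper leaves implicit.
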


The induced coboundary map $\mathcal{D} : C^n_{\mathrm{RB}} (A, T) \rightarrow C^{n+1}_{\mathrm{RB}} (A, T)$ is explicitly given by
\begin{align*}
\mathcal{D} (f, P) = ( \delta_{\mathrm{Hoch}} f, ~ (-1)^n d_T P + \Omega f),
\end{align*}
where $\delta_{hoch}$ is the Hochschild coboundary (given by equation \eqref{Hoch coboundary}), $d_TP=[\![T,P]\!]$ (given in Theorem \ref{das-gla}), and $\Omega$ is given by
\begin{align*}
(\Omega f)(a_1, \ldots, a_n) = (-1)^n \big(   f (Ta_1, \ldots, Ta_n) - \sum_{i=1}^n Tf (Ta_1, \ldots, a_i, \ldots, Ta_n)).
\end{align*}
The cohomology of the cochain complex $(C^\bullet_{\mathrm{RB}} (A, T), \mathcal{D})$ is called the cohomology of the Rota-Baxter algebra $(A, T)$. We denote the cohomology spaces by $H^\bullet_{RB} (A, T).$

\medskip

\begin{remark}
There is a short exact sequence of cochain complexes
\begin{align*}
\xymatrix{
0 \ar[r] & (C^\bullet (T), \delta_T) \ar[r]^i & (C^\bullet_{\mathrm{RB}} (A, T), \mathcal{D}) \ar[r]^p & (C^\bullet_{\mathrm{Hoch}} (A), \delta_{\mathrm{Hoch}}) \ar[r] & 0,
}
\end{align*}
where $i(P) = (0, P)$ and $p (f,P) = f$. Consequently, there is a long exact sequence in cohomology spaces
\begin{align*}
\xymatrix{
\cdots \ar[r] & H^n(T) \ar[r]^{H^n (i)} & H^n_{\mathrm{RB}} (A, T) \ar[r]^{H^n (p)} & H^n_{\mathrm{Hoch}} (A ) \ar[r]^{\delta^n} & H^{n+1} (T) \ar[r] & \cdots, 
}
\end{align*}
where the connecting homomorphism $\delta^n$ is given by $\delta^n [f] = [\Omega f]$.
\end{remark}

\section{Deformations of relative Rota-Baxter algebras}\label{sec-5}
In this section, we study ${\sf R}$-deformations of a relative Rota-Baxter algebra, where ${\sf R}$ is a local pro-Artinian $\mathbb{K}$-algebra. In particular, we consider the case ${\sf R}=\mathbb{K}[[t]]/(t^2)$, that is the infinitesimal deformations of a relative Rota-Baxter algebra. We show that equivalence classes of infinitesimal deformations of a relative Rota-Baxter algebra $(A,M,T)$ are in bijective correspondence with the cohomology classes in $H^2(A,M,T)$. In the case of a Rota-Baxter algebra $(A,T)$, we can interpret the infinitesimal deformations in terms of the cohomology space $H^2_{RB}(A,T)$ via results similar to this section.

Let ${\sf R}$ be a local pro-Artinian $\mathbb{K}$-algebra. One may define ${\sf R}$-relative Rota-Baxter algebras and morphisms between them similar to the Definition \ref{O-operator} and Definition \ref{morphisms} by replacing the vector spaces and the linear maps by ${\sf R}$-modules and ${\sf R}$-linear maps. Since ${\sf R}$ is local pro-Artinian $\mathbb{K}$-algebra, there is an augmentation $\epsilon: {\sf R}\rightarrow \mathbb{K}$. Thus, any relative Rota-Baxter algebra $(A,M,T)$ can be realised as an ${\sf R}$-relative Rota-Baxter algebra, where the ${\sf R}$-module structures on $A$ and $M$ are respectively given by  
$$r.a=\epsilon(r)a,\quad r.u=\epsilon(r)u, \quad\mbox{for }r\in {\sf R},~ a\in A, \mbox{ and }u\in M.$$

\begin{defn}
 An ${\sf R}$-deformation of a relative Rota-Baxter algebra $((A,\mu),(M,l,r),T)$ consists of an ${\sf R}$-algebra structure $\mu_{\sf R}$ on ${\sf R}\otimes_\mathbb{K} A$, two ${\sf R}$-linear maps $l_{\sf R}:({\sf R}\otimes_\mathbb{K} A)\otimes ({\sf R}\otimes_\mathbb{K} M)\rightarrow ({\sf R}\otimes_\mathbb{K} M)$, $r_{\sf R}:({\sf R}\otimes_\mathbb{K} M)\otimes ({\sf R}\otimes_\mathbb{K} A)\rightarrow ({\sf R}\otimes_\mathbb{K} M)$, and an ${\sf R}$-linear map $T_{\sf R}:{\sf R}\otimes_\mathbb{K} M\rightarrow {\sf R}\otimes_\mathbb{K} A$, which makes the triple $\big(({\sf R}\otimes_\mathbb{K} A,\mu_{\sf R}),({\sf R}\otimes_\mathbb{K} M,l_{\sf R},r_{\sf R}),T_{\sf R}\big)$ into an ${\sf R}$-relative Rota-Baxter algebra such that $$(\epsilon\otimes_\mathbb{K} \mathrm{id}_A,\epsilon\otimes_\mathbb{K} \mathrm{id}_M):\big(({\sf R}\otimes_\mathbb{K} A,\mu_{\sf R}),({\sf R}\otimes_\mathbb{K} M,l_{\sf R},r_{\sf R}),T_{\sf R}\big)\rightarrow ((A,\mu),(M,l,r),T)$$ is a morphism of ${\sf R}$-relative Rota-Baxter algebras. 
\end{defn}

\begin{defn}
Let $\big(({\sf R}\otimes_\mathbb{K} A,\mu_{\sf R}),({\sf R}\otimes_\mathbb{K} M,l_{\sf R},r_{\sf R}),T_{\sf R}\big)$ and $\big(({\sf R}\otimes_\mathbb{K} A,\mu_{\sf R}^{\prime}),({\sf R}\otimes_\mathbb{K} M,l_{\sf R}^{\prime},r_{\sf R}^{\prime}),T_{\sf R}^{\prime}\big)$ be two ${\sf R}$-deformations of a relative Rota-Baxter algebra $((A,\mu),(M,l,r),T)$. Then, they are said to be equivalent if there exists an ${\sf R}$-relative Rota-Baxter algebra isomorphism 
$$(\Phi, \Psi):\big(({\sf R}\otimes_\mathbb{K} A,\mu_{\sf R}),({\sf R}\otimes_\mathbb{K} M,l_{\sf R},r_{\sf R}),T_{\sf R}\big)\rightarrow\big(({\sf R}\otimes_\mathbb{K} A,\mu_{\sf R}^{\prime}),({\sf R}\otimes_\mathbb{K} M,l_{\sf R}^{\prime},r_{\sf R}^{\prime}),T_{\sf R}^{\prime}\big),$$
satisfying $(\epsilon\otimes_\mathbb{K} \mathrm{id}_A)\circ \Phi=(\epsilon\otimes_{\mathbb{K}} \mathrm{id}_A)$ and $(\epsilon\otimes_\mathbb{K} \mathrm{id}_M)\circ \Psi=(\epsilon\otimes_{\mathbb{K}} \mathrm{id}_M)$.
\end{defn}

Next, we explicitly describe ${\sf R}$-deformations of a relative Rota-Baxter algebra for ${\sf R}=\mathbb{K}[[t]]/(t^2)$. Such deformations are called infinitesimal deformations. Note that the augmentation $\epsilon :\mathbb{K}[[t]]/(t^2)\rightarrow \mathbb{K}$ is given by $\epsilon(f)=f(0)$, the evaluation of $f$ at $0$. Thus, an infinitesimal deformation of a relative Rota-Baxter algebra $((A,\mu),(M,l,r),T)$ consists of maps $\mu_0,\mu_1\in \mathrm{Hom}(A\otimes A, A)$, $l_0,l_1\in \mathrm{Hom}(A\otimes M, M)$, $r_0,r_1\in \mathrm{Hom}(M\otimes A, M)$, and $T_0,T_1\in \mathrm{Hom}(M, A)$ such that the maps 
$$\mu_{\sf R}:=\mu_0+t\mu_1,\quad l_{\sf R}:=l_0+t l_1,\quad r_{\sf R}:=r_0+t r_1,\quad T_{\sf R}:T_0+t T_1$$ 
makes $\big(({\sf R}\otimes_\mathbb{K} A,\mu_{\sf R}),({\sf R}\otimes_\mathbb{K} M,l_{\sf R},r_{\sf R}),T_{\sf R}\big)$ into an $\mathbb{K}[[t]]/(t^2)$-relative Rota-Baxter algebra and  
$$(\epsilon\otimes_\mathbb{K} \mathrm{id}_A,\epsilon\otimes_\mathbb{K} \mathrm{id}_M):\big(({\sf R}\otimes_\mathbb{K} A,\mu_{\sf R}),({\sf R}\otimes_\mathbb{K} M,l_{\sf R},r_{\sf R}),T_{\sf R}\big)\rightarrow ((A,\mu),(M,l,r),T)$$
is a morphism of $\mathbb{K}[[t]]/(t^2)$-relative Rota-Baxter algebras. The morphism condition implies that $\mu_0=\mu,~l_0=l,~r_0=r$, and $T_0=T$. Therefore, an infinitesimal deformation is determined by the quadruple $(\mu_1,l_1,r_1,T_1)$.

\begin{thm}\label{infinitesimal-cocycle}
Let an infinitesimal deformation of a relative Rota-Baxter algebra $((A,\mu),(M,l,r),T)$ be determined by the quadruple $(\mu_1,l_1,r_1,T_1)$. Then, the quadruple is a $2$-cocycle in the cohomology of the relative Rota-Baxter algebra $((A,\mu),(M,l,r),T)$. Conversely, any infinitesimal deformation is obtained by a $2$-cocycle in this way.  
\end{thm}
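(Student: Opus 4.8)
The plan is to unwind the definition of an $\mathsf{R}$-relative Rota-Baxter algebra for $\mathsf{R} = \mathbb{K}[[t]]/(t^2)$ and match the resulting equations, order by order in $t$, with the cocycle condition $\mathcal{D}(\mu_1, l_1, r_1, T_1) = 0$ where the cochain $(\mu_1, l_1, r_1, T_1)$ is viewed as an element of $C^2(A,M,T) = \big(\mathrm{Hom}(A^{\otimes 2}, A) \oplus \mathrm{Hom}(\mathcal{A}^{1,1}, M)\big) \oplus \mathrm{Hom}(M, A)$. First I would write the structure maps as $\mu_{\mathsf{R}} = \mu + t\mu_1$, $l_{\mathsf{R}} = l + t l_1$, $r_{\mathsf{R}} = r + t r_1$, $T_{\mathsf{R}} = T + t T_1$ (using the already-established fact that the $0$-th order terms must equal $\mu, l, r, T$), and substitute these into the three families of defining identities: (a) associativity of $\mu_{\mathsf{R}}$; (b) the bimodule axioms for $(l_{\mathsf{R}}, r_{\mathsf{R}})$ over $\mu_{\mathsf{R}}$; (c) the relative Rota-Baxter identity $T_{\mathsf{R}}(m) T_{\mathsf{R}}(n) = T_{\mathsf{R}}(T_{\mathsf{R}}(m) n + m T_{\mathsf{R}}(n))$. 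Working modulo $t^2$, the constant terms vanish because $(A,M,T)$ is already a relative Rota-Baxter algebra, so the content is entirely in the coefficient of $t$.

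Next I would identify the linear-in-$t$ equations with the components of $\mathcal{D}$. Using the formula $\mathcal{D}(f,P) = (\partial f,\, (-1)^n d_T P + \Omega f)$ from Section~\ref{sec-4} with $n = 2$, the pair $f = \mu_1 + l_1 + r_1 \in \mathcal{C}^2(A,M)$ and $P = T_1 \in C^2(T) = \mathrm{Hom}(M,A)$, I claim: the $\mathrm{Hom}(A^{\otimes 3}, A)$-component of $\partial f = [\mu+l+r, f]$ records the $t$-coefficient of associativity of $\mu_{\mathsf{R}}$ (this is just the statement that $\mu_1$ is a Hochschild $2$-cocycle relative to the pair, i.e. the classical Gerstenhaber computation), while the $\mathrm{Hom}(\mathcal{A}^{2,1}, M)$-component of $\partial f$ records the $t$-coefficients of the three bimodule axioms. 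For the operator part, expanding (c) to first order gives exactly the terms $T_1(m)T(n) + T(m)T_1(n)$, the ``$d_T T_1$'' terms coming from varying $T$ inside $T(\,\cdot\,)$ against the old $l, r$, and the ``$\Omega(\mu_1 + l_1 + r_1)$'' terms coming from varying $\mu, l, r$ inside the already-present $T$'s; comparing with the explicit formula for $d_T$ recalled after Theorem~\ref{das-gla} and the displayed formula for $\Omega$ should make them coincide up to the sign $(-1)^2 = 1$. So the $t$-linear part of the defining identities is precisely $\mathcal{D}(\mu_1 + l_1 + r_1, T_1) = 0$, which is the $2$-cocycle condition. The converse is immediate by reading the same equivalence backwards: given a $2$-cocycle, the maps $\mu + t\mu_1$, etc., satisfy all the $\mathsf{R}$-relative Rota-Baxter axioms modulo $t^2$, and the morphism condition holds by construction.

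The main obstacle I anticipate is purely bookkeeping: matching signs and reindexing so that the brute-force first-order expansion of identity (c) lines up term-for-term with $d_T T_1 + \Omega(\mu_1+l_1+r_1)$ as written. In particular one must be careful that the ``new $T_1$ against old structure'' terms assemble into $[\![T, T_1]\!] = d_T T_1$ with the right Koszul/Gerstenhaber signs, and that the ``old $T$'s against new $\mu_1, l_1, r_1$'' terms — where $\mu_1$ contributes a term like $-T(\mu_1(Tm, n) - \ldots)$, and $l_1, r_1$ contribute the analogous substitutions — reorganize exactly into $(\Omega f)(m,n) = f(Tm, Tn) - Tf(Tm, n) - Tf(m, Tn)$ evaluated on the combined cochain $f$; here one should note that $f(Tm,Tn)$ only sees the $\mu_1$ part (since $Tm, Tn \in A$) and $Tf(Tm, n)$, $Tf(m, Tn)$ only see $l_1$ and $r_1$. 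I would also flag that the bimodule-axiom part of $\partial f$ is slightly tedious because $\mathcal{A}^{2,1}$ has three summands, one for each position of $M$, and each yields one of the three axioms; but each is a routine bilinear expansion and carries no conceptual difficulty. Once the dictionary between ``first-order term of the $\mathsf{R}$-deformation equations'' and ``components of $\mathcal{D}$'' is set up, both directions of the theorem follow at once.
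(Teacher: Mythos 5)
Your proposal is correct and follows essentially the same route as the paper: expand the associativity, bimodule, and Rota--Baxter identities for $\mu+t\mu_1$, $l+tl_1$, $r+tr_1$, $T+tT_1$ modulo $t^2$, and identify the coefficient of $t$ in each with the components $\partial(\mu_1+l_1+r_1)$ and $d_T T_1$, $\Omega(\mu_1+l_1+r_1)$ of $\mathcal{D}$, the equivalence then giving both directions at once. The sign bookkeeping you flag is indeed the only delicate point, and the paper handles it exactly as you describe.
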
 
\begin{proof}
Let ${\sf R}=\mathbb{K}[[t]]/(t^2)$ and $\big(({\sf R}\otimes_\mathbb{K} A,\mu_{\sf R}),({\sf R}\otimes_\mathbb{K} M,l_{\sf R},r_{\sf R}),T_{\sf R}\big)$ be an infinitesimal deformation of relative Rota-Baxter algebra $((A,\mu),(M,l,r),T)$. Then, equivalently we have the following conditions. 

\medskip
\noindent ~~(i) $({\sf R}\otimes_\mathbb{K} A,\mu_{\sf R}=\mu+t\mu_1)$ is an ${\sf R}$-associative algebra, i.e.
$$\mu_{\sf R}(a,\mu_{\sf R}(b,c))=\mu_{\sf R}(\mu_{\sf R}(a,b),c),\quad \mbox{for all }a,b,c \in A.$$
On comparing the coefficients of $t$ from both sides of the above identity, we have
\begin{equation}\label{def-id1}
\mu_1(a,bc)+a\mu_1(b,c)=\mu_1(ab,c)+\mu_1(a,b)c,\quad \mbox{for all }a,b,c \in A,
\end{equation} 

\medskip
\noindent ~~(ii) the triple $({\sf R}\otimes_\mathbb{K} M,l_{\sf R},r_{\sf R})$ is an $({\sf R}\otimes_\mathbb{K} A,\mu_{\sf R}=\mu+t\mu_1)$-bimodule, i.e.
$$l_{\sf R}(\mu_{{\sf R}}(a,b),m)=l_{\sf R}(a,l_{\sf R}(b,m)),\quad r_{\sf R}(r_{\sf R}(m,a),b)=r_{\sf R}(m,\mu_{\sf R}(a,b)),\quad\mbox{and }~~r_{\sf R}(l_{\sf R}(a,m),b)=l_{\sf R}(a,r_{\sf R}(m,b)),$$
for all $a,b\in A$ and $m\in M$.
On comparing the coefficients of $t$ from both sides of the above identities, we have
\begin{equation}\label{def-id2}
\begin{split}
l_1(ab,m)+\mu_1(a,b)m&=l_1(a,bm)+a l_1(b,m)\\
r_1(ma,b)+r_1(m,a)b&=r_1(m,ab)+m\mu_1(a,b)\\
l_1(a,m)b+r_1(am,b)&=l_1(a,mb)+a r_1(m,b),\quad \mbox{for all }a,b \in A, ~~m\in M,
\end{split}
\end{equation} 

\medskip
\noindent ~~(iii) the map $T_{\sf R}:{\sf R}\otimes_\mathbb{K} M\rightarrow {\sf R}\otimes_\mathbb{K} A$ is a relative Rota-Baxter algebra, i.e.
$$\mu_{\sf R}(T_{\sf R}(m),T_{\sf R}(n))=T_{\sf R}\big(l_{\sf R}(T_{\sf R}(m),n)+r_{\sf R}(m,T_{\sf R}(n))\big)$$ 
On comparing the coefficients of $t$ from both sides of the above identity, we have
\begin{equation}\label{def-id3}
\begin{split}
&\mu_1(T(m),T(n))-T\big(l_1(T(m),n)+r_1(m,T(n)\big)\\
=&T_1\big(T(m)n+mT(n)\big)+T\big(T_1(m)n+mT_1(n)\big)-T_1(m)T(n)-T(m)T_1(n),\quad \mbox{for all }m,n\in M.
\end{split}
\end{equation}
By definition of the map $\partial$ is clear that equation \eqref{def-id1} and equation \eqref{def-id2} are equivalent to $\partial(\mu_1+l_1+r_1)=0$. Moreover \eqref{def-id3} is equivalent to $\Omega(\mu_1+l_1+r_1)-d_T(T_1)=0$. Thus,
$\big(({\sf R}\otimes_\mathbb{K} A,\mu_{\sf R}),({\sf R}\otimes_\mathbb{K} M,l_{\sf R},r_{\sf R}),T_{\sf R}\big)$ is an infinitesimal deformation of relative Rota-Baxter algebra $((A,\mu),(M,l,r),T)$ if and only if 
$$\mathcal{D}(\mu_1+l_1+r_1,T_1)=0.$$
Hence, the statement of the theorem follows.
\end{proof}

\begin{thm}
Let $((A,\mu),(M,l,r),T)$ be a relative Rota-Baxter algebra. There is a bijective correspondence between the equivalence classes of infinitesimal deformations of $((A,\mu),(M,l,r),T)$ and the cohomology classes in the second cohomology space $H^2(A,M,T)$.
\end{thm}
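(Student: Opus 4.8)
The plan is to build on Theorem \ref{infinitesimal-cocycle}, which already identifies infinitesimal deformations of $((A,\mu),(M,l,r),T)$ with $2$-cocycles of the complex $(C^\bullet(A,M,T),\mathcal{D})$. It therefore remains to show that two infinitesimal deformations are equivalent precisely when the associated $2$-cocycles differ by a coboundary $\mathcal{D}(\phi,\psi)$ with $(\phi,\psi)\in C^1(A,M,T)=\mathrm{Hom}(A,A)\oplus\mathrm{Hom}(M,M)$; granting this, the assignment ``deformation $\mapsto$ cohomology class of its $2$-cocycle'' descends to a well-defined bijection onto $H^2(A,M,T)$. First I would record that any isomorphism $(\Phi,\Psi)$ of $\mathbb{K}[[t]]/(t^2)$-relative Rota-Baxter algebras lying over the identity must have the form $\Phi=\mathrm{id}_A+t\phi$ and $\Psi=\mathrm{id}_M+t\psi$ for some $\phi\in\mathrm{Hom}(A,A)$ and $\psi\in\mathrm{Hom}(M,M)$: the normalization $(\epsilon\otimes\mathrm{id})\circ\Phi=\epsilon\otimes\mathrm{id}$ (and likewise for $\Psi$) forces the degree-$0$ part to be the identity, and higher terms vanish modulo $t^2$.

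Next I would expand the defining conditions of such a morphism between two deformations $(\mu_{\sf R},l_{\sf R},r_{\sf R},T_{\sf R})$ and $(\mu_{\sf R}^{\prime},l_{\sf R}^{\prime},r_{\sf R}^{\prime},T_{\sf R}^{\prime})$ — namely the algebra condition $\Phi\circ\mu_{\sf R}=\mu_{\sf R}^{\prime}\circ(\Phi\otimes\Phi)$, the two bimodule-compatibility identities relating $(l_{\sf R},r_{\sf R})$ to $(l_{\sf R}^{\prime},r_{\sf R}^{\prime})$ through $(\Phi,\Psi)$, and the intertwining relation $\Phi\circ T_{\sf R}=T_{\sf R}^{\prime}\circ\Psi$ — and collect the coefficients of $t$. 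The algebra and bimodule identities yield the classical Gerstenhaber-type relation $(\mu_1^{\prime}+l_1^{\prime}+r_1^{\prime})-(\mu_1+l_1+r_1)=\partial(\phi+\psi)$, where $\partial$ is the $\mathsf{AssBimod}$-pair differential of equation \eqref{partial} (computed inside the bigraded complex $C^{\bullet|0}(A\oplus M,A\oplus M)$). The $t$-coefficient of the intertwining relation yields $T_1^{\prime}-T_1 = \pm\, d_T(\psi)$ plus mixed terms assembled from $\phi$ acting on $T$ and $T$ acting on $\psi$; these mixed terms are exactly $\Omega$ applied to $\phi$, so that $T_1^{\prime}-T_1$ equals the second component of $\mathcal{D}(\phi,\psi)$ as recorded in the explicit formula for $\mathcal{D}$ on $C^1(A,M,T)$ preceding Theorem \ref{long-exact-seq-thm}. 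Assembling the two pieces gives $(\mu_1^{\prime}+l_1^{\prime}+r_1^{\prime},T_1^{\prime})-(\mu_1+l_1+r_1,T_1)=\mathcal{D}(\phi,\psi)$. Conversely, given any $(\phi,\psi)\in C^1(A,M,T)$, running this computation backwards produces the explicit equivalence $(\mathrm{id}_A+t\phi,\mathrm{id}_M+t\psi)$ between a deformation determined by a $2$-cocycle and the deformation determined by the perturbed cocycle; together with Theorem \ref{infinitesimal-cocycle} this proves the asserted bijection.

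The main obstacle I anticipate is bookkeeping rather than conceptual. One must check that the sign conventions and degree-shift (Koszul) factors hidden in $\mathcal{D}=(-1)^{n-2}\,l_1^{(\pi[1],T)}$ match the signs produced by expanding the three morphism conditions, and in particular that the terms coming from $\phi$ composed with $T$ and from $T$ composed with $\psi$ reassemble \emph{exactly} into $\Omega\phi$ (together with $d_T\psi$) with the correct sign, rather than into some sign-twisted variant. A secondary point to verify is that $C^1(A,M,T)=\mathrm{Hom}(A,A)\oplus\mathrm{Hom}(M,M)$ is genuinely the space of infinitesimal automorphisms appearing here, so that no further normalization conditions are lost; once these sign and degree compatibilities are pinned down, the remainder is routine.
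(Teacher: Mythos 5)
Your proposal is correct and follows essentially the same route as the paper: both reduce to Theorem \ref{infinitesimal-cocycle}, write any normalized isomorphism as $(\mathrm{id}_A+t\phi_1,\mathrm{id}_M+t\psi_1)$, and compare $t$-coefficients of the algebra, bimodule, and intertwining conditions to identify the difference of $2$-cocycles with $\mathcal{D}(\phi_1,\psi_1)$. The only bookkeeping point to fix is that since $C^1(T)=0$, the element $\psi_1\in\mathrm{Hom}(M,M)$ sits in the $\mathsf{AssBimod}$ component of $C^1(A,M,T)$, so the second component of $\mathcal{D}(\phi_1,\psi_1)$ is entirely $\Omega(\phi_1+\psi_1)=T\circ\psi_1-\phi_1\circ T$ (there is no $d_T\psi_1$ term), which matches the paper's relation $T_1+\phi_1\circ T=T_1^{\prime}+T\circ\psi_1$.
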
 
\begin{proof}
Let us fix ${\sf R}=\mathbb{K}[[t]]/(t^2)$. Let $\big(({\sf R}\otimes_\mathbb{K} A,\mu_{\sf R}),({\sf R}\otimes_\mathbb{K} M,l_{\sf R},r_{\sf R}),T_{\sf R}\big)$ and $\big(({\sf R}\otimes_\mathbb{K} A,\mu_{\sf R}^{\prime}),({\sf R}\otimes_\mathbb{K} M,l_{\sf R}^{\prime},r_{\sf R}^{\prime}),T_{\sf R}^{\prime}\big)$ be two infinitesimal deformations of a relative Rota-Baxter algebra $((A,\mu),(M,l,r),T)$. Then, Theorem \ref{infinitesimal-cocycle} implies that these infinitesimal deformations are determined by $2$-cocycles (say) $(\mu_1,l_1,r_1,T_1)$ and $(\mu_1^{\prime},l_1^{\prime},r_1^{\prime},T_1^{\prime})$, respectively. Both of the infinitesimal deformations are equivalent if there exists an ${\sf R}$-relative Rota-Baxter algebra isomorphism 
$$(\Phi, \Psi):\big(({\sf R}\otimes_\mathbb{K} A,\mu_{\sf R}),({\sf R}\otimes_\mathbb{K} M,l_{\sf R},r_{\sf R}),T_{\sf R}\big)\rightarrow\big(({\sf R}\otimes_\mathbb{K} A,\mu_{\sf R}^{\prime}),({\sf R}\otimes_\mathbb{K} M,l_{\sf R}^{\prime},r_{\sf R}^{\prime}),T_{\sf R}^{\prime}\big),$$
satisfying 
\begin{equation}\label{compatibility}
(\epsilon\otimes_\mathbb{K} \mathrm{id}_A)\circ \Phi=(\epsilon\otimes_{\mathbb{K}} \mathrm{id}_A)\quad\mbox{and}\quad(\epsilon\otimes_\mathbb{K} \mathrm{id}_M)\circ \Psi=(\epsilon\otimes_{\mathbb{K}} \mathrm{id}_M).
\end{equation}
From equation \eqref{compatibility}, it follows that 
$$\Phi=\mathrm{id}_A+t \phi_1, \quad \Psi=\mathrm{id}_M+t \psi_1,\quad \mbox{for some } \phi_1\in\mathrm{Hom}(A,A) \mbox{ and } \psi_1\in\mathrm{Hom}(M,M).$$
If $(\Phi, \Psi):\big(({\sf R}\otimes_\mathbb{K} A,\mu_{\sf R}),({\sf R}\otimes_\mathbb{K} M,l_{\sf R},r_{\sf R}),T_{\sf R}\big)\rightarrow\big(({\sf R}\otimes_\mathbb{K} A,\mu_{\sf R}^{\prime}),({\sf R}\otimes_\mathbb{K} M,l_{\sf R}^{\prime},r_{\sf R}^{\prime}),T_{\sf R}^{\prime}\big)$ is an ${\sf R}$-relative Rota-Baxter algebra isomorphism, then one can observe the following.

\medskip
\noindent ~~(i) The map $\mathrm{id}_A+t \phi_1:({\sf R}\otimes_\mathbb{K} A,\mu_{\sf R})\rightarrow ({\sf R}\otimes_\mathbb{K} A,\mu_{\sf R}^{\prime})$ is an associative $\mathsf{R}$-algebra isomorphism, which implies that  
$$\Phi(\mu_{\sf R}(a,b))=\mu_{\sf R}^{\prime}(\Phi(a),\Phi(b)),\quad \mbox{for all }a,b\in A.$$
On comparing the coefficients of $t$,
\begin{equation}\label{eq:id1}
\mu_1-\mu_1^{\prime}=\delta_{\mathrm{Hoch}}\phi_1.
\end{equation}

\medskip
\noindent ~~(ii) The map $(\Phi,\Psi)$ is an isomorphism of $\mathsf{R}$-relative Rota-Baxter operators from $T_{\sf R}$ to $T_{\sf R}^{\prime}$. Then, 
\begin{enumerate}
\item $\Psi(l_{\sf R}(a,m))=l_{\sf R}^{\prime}(\Phi(a),\Psi(m)),~~\Psi(r_{\sf R}(m,a))=r_{\sf R}^{\prime}(\Psi(m),\Phi(a)),~~\mbox{for all }m\in M,~ a\in A$ and 
\item $\Phi\circ T_{\sf R}= T_{\sf R}^{\prime}\circ \Psi$.
\end{enumerate}
Thus, comparing the coefficients of $t$ in the above identities given in the condition $(1)$, we have 
\begin{equation}\label{eq:id2}
\begin{split}
\psi_1(am)+l_1(a,m)&=l_1^{\prime}(a,m)+\phi_1(a)m+a\psi_1(m)\\
\psi_1(ma)+r_1(m,a)&=r_1^{\prime}(m,a)+\psi_1(m)a+m\phi_1(a).
\end{split}
\end{equation}
 Moreover, comparing the coefficients of $t$ from both sides of the identity in the condition $(2)$, we get 
 \begin{equation}\label{eq:id3}
 T_1+\phi_1\circ T= T_1^{\prime}+T\circ \psi_1. 
\end{equation}  

Therefore, from equations \eqref{eq:id1}-\eqref{eq:id3}, the pair $(\Phi,\Psi)$ is an ${\sf R}$-relative Rota-Baxter algebra isomorphism if and only if 
$$(\mu_1,l_1,r_1,T_1)-(\mu_1^{\prime},l_1^{\prime},r_1^{\prime},T_1)=\mathcal{D}(\phi_1,\psi_1).$$
Hence, there is a bijective correspondence between the equivalence classes of infinitesimal deformations of $((A,\mu),(M,l,r),T)$ and the second cohomology space $H^2(A,M,T)$. 
\end{proof}

\section{Cohomology and deformations of skew-symmetric infinitesimal bialgebras}\label{sec-6}
In this section, we define cohomology of (coboundary) skew-symmetric infinitesimal bialgebras coming from a skew-symmetric associative $r$-matrix. Throughout the section, we assume that all the vector spaces are finite-dimensional.
\subsection{Skew-symmetric $r$-matrices and skew-symmetric infinitesimal bialgebras}
Let $A$ be an associative algebra and let $r\in \wedge^2 A$ given by $r:=\sum_i a_i\otimes b_i\in A\otimes A$. We recall from \cite{bai1} that the associative Yang-Baxter equation in $A$ is given by 
\begin{equation}\label{Associative YB eq}
r_{12}r_{13}+r_{13}r_{23}-r_{23}r_{12}=0,
\end{equation}
where 
\begin{equation}
\begin{split}
r_{12}r_{13}&=\sum_{ij}a_i a_j \otimes b_i\otimes b_j,\\
r_{13}r_{23}&=\sum_{ij}a_i  \otimes a_j\otimes b_i b_j,\\
r_{23}r_{12}&=\sum_{ij} a_j \otimes a_i b_j\otimes b_i.
\end{split}
\end{equation}
A solution $r\in \wedge^2 A$ of the associative Yang-Baxter equation \eqref{Associative YB eq} is called a skew-symmetric $r$-matrix on associative algebra $A$. An element $r\in \wedge^2 A$ induces a linear map $r^\sharp A^*\rightarrow A$ given by 
$$\langle\beta,r^\sharp(\alpha)\rangle:=r(\alpha,\beta), \quad\mbox{for all } \alpha,\beta\in A^*.$$ 
Let us also recall the following relationship between the skew-symmetric $r$-matrices and relative Rota-Baxter operators on associative algebras, see \cite{bai1} for details.

\begin{prop}
An element $r\in \wedge^2 A$ is a skew-symmetric associative $r$-matrix on an associative algebra $A$ if and only if $(A,A^*,r^\sharp)$ is a relative Rota-Baxter algebra. Here, $A^*$ is an $A$-bimodule with coadjoint actions.
\end{prop}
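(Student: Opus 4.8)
The plan is to unwind both sides of the claimed equivalence into explicit tensor identities and show they literally coincide. First I would fix notation: write $r = \sum_i a_i \otimes b_i \in \wedge^2 A$, recall that the coadjoint $A$-bimodule structure on $A^*$ is $(a \cdot f)(c) = f(ca)$ and $(f \cdot a)(c) = f(ac)$ (or the convention used in the cyclic cohomology subsection, $(afb)(c) = f(bac)$), and recall from Definition \ref{O-operator} that $r^\sharp : A^* \to A$ is a relative Rota-Baxter operator on $A^*$ over $A$ precisely when $r^\sharp(\alpha)\, r^\sharp(\beta) = r^\sharp\big( r^\sharp(\alpha)\cdot\beta + \alpha\cdot r^\sharp(\beta)\big)$ for all $\alpha,\beta \in A^*$. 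Here $(A,A^*,r^\sharp)$ being a relative Rota-Baxter algebra just means $A$ is associative (given), $A^*$ is an $A$-bimodule with the coadjoint actions (automatic), and $r^\sharp$ satisfies the operator identity; so the content is the equivalence between the associative Yang-Baxter equation \eqref{Associative YB eq} for $r$ and the Rota-Baxter operator identity for $r^\sharp$.

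Next I would compute each side. Using $\langle \beta, r^\sharp(\alpha)\rangle = r(\alpha,\beta) = \sum_i \langle\alpha, a_i\rangle\langle\beta,b_i\rangle$ together with skew-symmetry $r(\alpha,\beta) = -r(\beta,\alpha)$, evaluate the defect of the operator identity by pairing both sides against an arbitrary $\gamma \in A^*$, i.e. compute $\langle\gamma,\ r^\sharp(\alpha) r^\sharp(\beta) - r^\sharp(r^\sharp(\alpha)\cdot\beta + \alpha\cdot r^\sharp(\beta))\rangle$. The first term produces $\sum_{ij}\langle\alpha,a_i\rangle\langle\beta,a_j\rangle\langle\gamma, b_i b_j\rangle$, which is exactly the pairing of $r_{13}r_{23}$ with $\alpha\otimes\beta\otimes\gamma$. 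The two terms inside $r^\sharp(\cdots)$, after expanding the coadjoint actions and the definition of $r^\sharp$, produce the pairings of $\pm r_{12}r_{13}$ and $\pm r_{23}r_{12}$ (up to relabeling indices and using skew-symmetry of $r$ to match signs). Assembling, the operator defect pairs with $\alpha\otimes\beta\otimes\gamma$ to give $\pm(r_{12}r_{13} + r_{13}r_{23} - r_{23}r_{12})$ evaluated on $\alpha\otimes\beta\otimes\gamma$; since $A$ is finite-dimensional this pairing is nondegenerate, so the operator identity holds for all $\alpha,\beta$ iff the tensor \eqref{Associative YB eq} vanishes.

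The main obstacle I anticipate is purely bookkeeping: getting the three components $r_{12}r_{13}$, $r_{13}r_{23}$, $r_{23}r_{12}$ to line up with the correct terms and, crucially, the correct signs. The sign tracking is delicate because it depends on the precise sign convention in $\langle\beta,r^\sharp(\alpha)\rangle = r(\alpha,\beta)$ versus $r(\beta,\alpha)$, and on which of $a_i,b_i$ plays the "input" versus "output" role; skew-symmetry of $r$ is what reconciles the asymmetric-looking term $r_{23}r_{12}$ (which has $a_j$ in the first slot) with the symmetric-looking ones. I would handle this by consistently pairing everything into scalars in $\mathbb{K}$, never manipulating the tensors directly, and using $r(\alpha,\beta) = -r(\beta,\alpha)$ exactly once per term as needed. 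Since the analogous Lie-algebra statement (Kupershmidt's theorem relating classical $r$-matrices and $\mathcal{O}$-operators on $\mathfrak{g}^*$) is standard and this is its associative analog from \cite{bai1}, the computation is guaranteed to close; the proof is therefore essentially a dualization argument, and I would present it as such, citing \cite{bai1} for the original statement and giving the pairing computation as the verification.
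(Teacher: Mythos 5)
Your proposal is correct, but it cannot be compared with a proof in the paper because the paper offers none: the proposition is explicitly \emph{recalled} from \cite{bai1} ("see \cite{bai1} for details") and stated without argument. Your pairing computation does close in the way you predict. Concretely, with $r^\sharp(\alpha)=\sum_i\langle\alpha,a_i\rangle b_i$ and the coadjoint actions $(a\cdot\alpha)(c)=\alpha(ca)$, $(\alpha\cdot a)(c)=\alpha(ac)$, pairing against $\gamma$ gives $\langle\gamma,r^\sharp(\alpha)r^\sharp(\beta)\rangle=\langle\alpha\otimes\beta\otimes\gamma,\,r_{13}r_{23}\rangle$ and $\langle\gamma,r^\sharp(r^\sharp(\alpha)\cdot\beta)\rangle=\langle\alpha\otimes\beta\otimes\gamma,\,\sum_{ij}a_i\otimes a_jb_i\otimes b_j\rangle=\langle\alpha\otimes\beta\otimes\gamma,\,r_{23}r_{12}\rangle$ with no sign issue at all, while the remaining term $\langle\gamma,r^\sharp(\alpha\cdot r^\sharp(\beta))\rangle=\langle\alpha\otimes\beta\otimes\gamma,\,\sum_{ij}b_ja_i\otimes a_j\otimes b_i\rangle$ is the unique place where skew-symmetry enters, converting it to $-\langle\alpha\otimes\beta\otimes\gamma,\,r_{12}r_{13}\rangle$; the operator defect therefore pairs to exactly $r_{12}r_{13}+r_{13}r_{23}-r_{23}r_{12}$, and the standing finite-dimensionality assumption of Section 6 makes the pairing nondegenerate, giving the equivalence. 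Two points worth making explicit when you write this up: (i) only one of the three terms actually uses skew-symmetry, so your phrase "exactly once per term as needed" should be sharpened; (ii) the bimodule convention $(afb)(c)=f(bac)$ quoted in the cyclic-cohomology subsection is not literally the dual of the adjoint bimodule (that would give $f(bca)$), so you should fix the coadjoint convention independently rather than import it from there. What your approach buys over the paper's is a self-contained verification; what the paper's citation buys is brevity and consistency with \cite{bai1}, where the sign conventions are already reconciled.
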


Let $L_a:A\rightarrow A$ and $R_a:A\rightarrow A$ be left and right multiplication operators on the associative algebra $A$ for $a\in A$. 

\begin{defn}[\cite{bai1}]
A skew-symmetric (antisymmetric) infinitesimal bialgebra structure on an associative algebra $A$ is a linear map $\Delta:A\rightarrow A\otimes A$ such that the map $\Delta^*:A^*\otimes A^*\rightarrow A^*$ defines an associative algebra structure on $A^*$, and for all $a,b\in A$, the map $\Delta$ satisfies the following equations
\begin{equation}\label{Aib1}
\Delta(ab)=(\mathrm{id}\otimes L_a)\Delta(y)+(R_b\otimes \mathrm{id})\Delta(x),
\end{equation}
\begin{equation}\label{Aib2}
(L_b\otimes\mathrm{id}-\mathrm{id}\otimes R_b)\Delta(a)+\sigma\big((L_a\otimes\mathrm{id}-\mathrm{id}\otimes R_a)\Delta(b)]=0.
\end{equation}
Here, the map $\sigma : A\otimes A\rightarrow A\otimes A$ is defined by $\sigma(a\otimes b):=b\otimes a$.
\end{defn}
A skew-symmetric infinitesimal bialgebra $(A,\Delta)$ is called \textit{coboundary skew-symmetric infinitesimal bialgebra} if there exists an element $r\in A\otimes A$ such that 
\begin{equation}\label{coboundaryAIB}
\Delta_r(a)=(\mathrm{id}\otimes L_a-R_a\otimes \mathrm{id}) r, \quad\mbox{for all }a\in A.
\end{equation}

\begin{prop}[\cite{bai1}]
Let $A$ be an associative algebra and $r\in \wedge^2 A$. Then, the map $\Delta_r$ given by the equation \eqref{coboundaryAIB} defines an skew-symmetric infinitesimal bialgebra $(A,\Delta_r)$ if $r$ satisfies the associative Yang-Baxter equation \eqref{Associative YB eq} in $A$. 
\end{prop}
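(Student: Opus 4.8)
The plan is to verify the three defining properties of a skew-symmetric infinitesimal bialgebra for the pair $(A,\Delta_r)$: that $\Delta_r^*$ equips $A^*$ with an associative product, the derivation-type identity \eqref{Aib1}, and the skew-symmetry identity \eqref{Aib2}. The key point is that only the associativity of $\Delta_r^*$ uses the associative Yang-Baxter equation \eqref{Associative YB eq}; the other two identities hold for every $r\in\wedge^2 A$.

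For \eqref{Aib1}, I would first endow $A\otimes A$ with the $A$-bimodule structure whose left action of $a$ is $\mathrm{id}\otimes L_a$ and whose right action of $b$ is $R_b\otimes\mathrm{id}$; these commute because $A$ is associative. Under \eqref{coboundaryAIB} we then have $\Delta_r(a)=a\cdot r - r\cdot a$, the principal (inner) derivation determined by $r$ for this bimodule, so the identity $\Delta_r(ab)=(\mathrm{id}\otimes L_a)\Delta_r(b)+(R_b\otimes\mathrm{id})\Delta_r(a)$ is immediate from the bimodule axioms, with no constraint on $r$. For \eqref{Aib2}, I would substitute $r=\sum_i a_i\otimes b_i$ and expand $(L_b\otimes\mathrm{id}-\mathrm{id}\otimes R_b)\Delta_r(a)$ and $\sigma\bigl((L_a\otimes\mathrm{id}-\mathrm{id}\otimes R_a)\Delta_r(b)\bigr)$; each is a sum of four elementary tensors, and after applying $\sigma$ to the second sum and using the skew-symmetry $\sigma(r)=-r$, the eight terms cancel in four matching pairs. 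Hence \eqref{Aib2} follows from skew-symmetry of $r$ alone.

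The substantive step is the associativity of $\Delta_r^*$. Rather than expanding the associator of $\Delta_r^*$ directly, I would invoke the proposition above relating skew-symmetric $r$-matrices to relative Rota-Baxter operators: since $r$ solves \eqref{Associative YB eq}, it is a skew-symmetric $r$-matrix, so $(A,A^*,r^\sharp)$ is a relative Rota-Baxter algebra with $A^*$ the coadjoint bimodule. It is classical (see \cite{uchino}, cf.\ the diagram \eqref{d2}) that a relative Rota-Baxter operator $T:M\to A$ makes $M$ into an associative algebra with product $m\ast n:=(Tm)\cdot n+m\cdot(Tn)$, where the dots denote the bimodule actions, and that $T$ is then an algebra morphism onto $A$. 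Applying this to $T=r^\sharp$ yields an associative product $\ast$ on $A^*$; I would then check, by unwinding the definition of $r^\sharp$, the coadjoint actions on $A^*$, and the dual map, the identity $\langle\Delta_r(a),\alpha\otimes\beta\rangle=\langle a,\alpha\ast\beta\rangle$ for all $a\in A$ and $\alpha,\beta\in A^*$, i.e.\ that $\Delta_r^*$ agrees with $\ast$ up to a sign or a reversal of the multiplication order, neither of which affects associativity. The associativity of $\Delta_r^*$ is then inherited from that of $\ast$, and $(A,\Delta_r)$ is a skew-symmetric infinitesimal bialgebra.

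The main obstacle is the final identification of $\Delta_r^*$ with $\ast$: the conceptual input is light, but it requires care with conventions --- the precise coadjoint $A$-bimodule structure used on $A^*$, the normalization of $r^\sharp$, and the placement of $L_a$ versus $R_a$ in \eqref{coboundaryAIB} --- since a mismatch replaces $r$ by $\sigma(r)$ or flips a sign. A more computational alternative for this step is to verify directly that $(\Delta_r\otimes\mathrm{id})\Delta_r=(\mathrm{id}\otimes\Delta_r)\Delta_r$ by expanding both sides in terms of $r=\sum_i a_i\otimes b_i$ and using skew-symmetry to bring the difference into the shape of a pairing of $r_{12}r_{13}+r_{13}r_{23}-r_{23}r_{12}$, which vanishes by \eqref{Associative YB eq}; this avoids $A^*$ entirely but is a longer manipulation.
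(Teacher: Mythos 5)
This proposition is quoted in the paper from \cite{bai1} and no proof is supplied there, so there is no in-paper argument to compare against; judged on its own, your proof is correct and is essentially a reconstruction of Bai's original argument, with one genuine variation. The standard decomposition is exactly as you describe: \eqref{Aib1} is the inner-derivation identity for the $A$-bimodule $A\otimes A$ with actions $\mathrm{id}\otimes L_a$ and $R_b\otimes\mathrm{id}$ and holds for arbitrary $r$, while \eqref{Aib2} reduces, after applying $\sigma$ and using $\sigma(r)=-r$, to a cancellation of eight elementary tensors in four pairs; both checks go through as you claim (only note that $\mathrm{id}\otimes L_a$ and $R_b\otimes\mathrm{id}$ commute because they act on different tensor factors, not because of associativity, which is instead what you need for $L_aL_b=L_{ab}$ and $R_bR_a=R_{ab}$). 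Your variation is in the coassociativity step: instead of expanding $(\Delta_r\otimes\mathrm{id})\Delta_r-(\mathrm{id}\otimes\Delta_r)\Delta_r$ against $r_{12}r_{13}+r_{13}r_{23}-r_{23}r_{12}$ as Bai does, you route through the preceding proposition identifying skew-symmetric solutions of \eqref{Associative YB eq} with relative Rota-Baxter operators $r^\sharp$ on the coadjoint bimodule, and pull back the induced associative product $\alpha\star\beta=r^\sharp(\alpha)\beta+\alpha r^\sharp(\beta)$ (the same product the paper uses in Section 6.2). The identification you flag as the delicate point does work out on the nose: writing $r=\sum_i a_i\otimes b_i$, one has $\Delta_r(a)=\sum_i a_i\otimes ab_i-\sum_i a_ia\otimes b_i$ and $r^\sharp(\alpha)=\sum_i\alpha(a_i)b_i$, and a one-line computation using skew-symmetry gives $\langle\Delta_r(a),\alpha\otimes\beta\rangle=\langle\alpha\star\beta,a\rangle$ with the coadjoint actions $\langle x\beta,c\rangle=\langle\beta,cx\rangle$, $\langle\alpha y,c\rangle=\langle\alpha,yc\rangle$, so no sign or order reversal actually intervenes. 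What your route buys is conceptual economy --- associativity of $\Delta_r^*$ becomes a corollary of the Rota-Baxter machinery already set up in the paper --- at the cost of depending on the (also uncited-in-detail) equivalence between $r$-matrices and relative Rota-Baxter operators; the direct expansion you mention as an alternative is self-contained but longer.
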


Let $(A_1,\Delta_{r_1})$ and $(A_2,\Delta_{r_2})$ be two coboundary skew-symmetric infinitesimal bialgebras. A homomorphism of  coboundary skew-symmetric infinitesimal bialgebras is an algebra homomorphism $\phi:A_1\rightarrow A_2 $ satisfying $(\phi\otimes \phi)(r_1)=r_2$.
\subsection{Cohomology of coboundary skew-symmetric infinitesimal bialgebras}
Let $A$ be an associative algebra and $r\in \wedge^2 A$ be a solution of associative Yang-Baxter equation \eqref{Associative YB eq}. Then, $(A,\Delta_r)$ is a coboundary skew-symmetric infinitesimal bialgebras.

The skew-symmetric matrix $r$ corresponds to a relative Rota-Baxter operator $r^\sharp: A^*\rightarrow A$. Thus, we have an associative algebra structure on $A^*$ given by 
$$\alpha\star \beta =r^\sharp(\alpha)\beta + \alpha r^\sharp(\beta ), \quad\mbox{for all }\alpha, \beta \in A^*.$$
Here, the first and second term on the right hand side denotes left and right coadjoint actions of $A$ on $A^*$, respectively . Let us consider the cyclic complex $(C_\lambda^\bullet(A^*),b)$ for the associative algebra $A^*$. We define a cochain complex $(C^\bullet_r(A),\delta_r)$ associated to an skew-symmetric $r$-matrix as follows: for $n=0,1$, define $C^n_r(A):=0$ and for $n\geq 2$, define $C^n_r(A):=C^{n-1}_\lambda(A^*)$. The coboundary map $\delta_r:= b$. We denote the cohomology of the complex by $H^\bullet_r(A)$.

Let us consider the canonical monomorphism $\varphi: C^\bullet_\lambda(A^*)\rightarrow C^\bullet_{Hoch}(A^*,A)$ given by 
$$\langle\varphi(\eta)(\alpha_1,\ldots,\alpha_n),\alpha_{n+1}\rangle:=\eta(\alpha_1,\ldots,\alpha_n,\alpha_{n+1}), \quad\mbox{for all } \alpha_1,\ldots,\alpha_{n+1}\in A^*.$$
Then, the following diagram commutes 
\begin{equation}\label{subcomplex}
\begin{CD}
C^{n-1}_\lambda(A^*)@>\varphi>>C^{n-1}_{Hoch}(A^*,A)\\
  @V \delta_r VV @VV\delta_{Hoch} V\\
C^{n}_\lambda(A^*)@>\varphi>>C^{n}_{Hoch}(A^*,A)
\end{CD}
\end{equation}

\noindent In fact, for any $\eta\in C^{n-1}_{\lambda}(A^*)$ and $\alpha_1,\ldots,\alpha_n\in A^*$, we get 
\begin{equation}\label{com diag id1}
\begin{split}
\delta_{Hoch}(\varphi(\eta))(\alpha_1,\alpha_2,\ldots,\alpha_n)
=&(-1)^{n-1}r^{\sharp}(\varphi(\eta)(\alpha_1,\ldots,\alpha_{n-1})\alpha_n)-r^{\sharp}(\alpha_1\varphi(\eta)(\alpha_2,\ldots,\alpha_n))\\
&+\sum_{i=1}^{n-1}(-1)^{i-1}\varphi(\eta)(\alpha_1,\ldots,\alpha_i\star \alpha_{i+1},\ldots,\alpha_n)\\
&+r^{\sharp}(\alpha_1)\varphi(\eta)(\alpha_2,\ldots,\alpha_{n})-(-1)^{n-1}\varphi(\eta)(\alpha_1,\ldots,\alpha_{n-1})r^{\sharp}(\alpha_n).
\end{split}
\end{equation}
For any $\beta\in A^*$, the first and second term on the right hand side of the equation \eqref{com diag id1} can be written as
\begin{equation}\label{com diag id2}
\begin{split}
\langle r^{\sharp}(\varphi(\eta)(\alpha_1,\ldots,\alpha_{n-1})\alpha_n),\beta \rangle=-\langle \varphi(\eta)(\alpha_1,\ldots,\alpha_{n-1})\alpha_n, r^{\sharp}(\beta )\rangle&=-\langle \varphi(\eta)(\alpha_1,\ldots,\alpha_{n-1}), \alpha_n r^{\sharp}(\beta )\rangle\\
&=-\eta(\alpha_1,\ldots,\alpha_{n-1},\alpha_n r^{\sharp}(\beta )),
\end{split}
\end{equation}
\begin{equation}\label{com diag id3}
\langle r^{\sharp}(\alpha_1\varphi(\eta)(\alpha_2,\ldots,\alpha_{n})),\beta \rangle=-\langle \varphi(\eta)(\alpha_2,\ldots,\alpha_{n})),r^{\sharp}(\beta)\alpha_1 \rangle=-\eta(\alpha_2,\ldots,\alpha_{n},r^\sharp(\beta)\alpha_1).
\end{equation}
Similarly, for any $\beta\in A^*$, the fourth and fifth term on the right hand side of the equation \eqref{com diag id1} can be written as 
\begin{equation}\label{com diag id4}
\begin{split}
\langle r^{\sharp}(\alpha_1)\varphi(\eta)(\alpha_2,\ldots,\alpha_{n}),\beta \rangle=\langle \varphi(\eta)(\alpha_2,\ldots,\alpha_{n}), \beta r^{\sharp}(\alpha_1)\rangle
=\eta(\alpha_2,\ldots,\alpha_{n},\beta r^{\sharp}(\alpha_1)),
\end{split}
\end{equation}
\begin{equation}\label{com diag id5}
\langle \varphi(\eta)(\alpha_1,\ldots,\alpha_{n-1})r^{\sharp}(\alpha_n),\beta\rangle=\eta(\alpha_1,\alpha_2,\ldots,\alpha_{n-1},r^\sharp(\alpha_n)\beta).
\end{equation}

On the other hand, by the definition of the cyclic coboundary $\delta_r=b:C^{n-1}_\lambda(A^*)\rightarrow C^{n}_\lambda(A^*)$, for $\eta\in C^{n-1}_\lambda(A^*)$, we have
\begin{equation}\label{com diag id6}
\begin{split}
\langle \varphi(b(\eta))(\alpha_1,\ldots,\alpha_{n-1},\alpha_n),\beta\rangle&=b(\eta)(\alpha_1,\ldots,\alpha_{n-1},\alpha_n,\beta)\\
&=\sum_{i=1}^{n-1}(-1)^{i+1}\eta(\alpha_1,\ldots,\alpha_i\star\alpha_{i+1},\ldots,\alpha_n,\beta)\\&\quad+(-1)^{n+1}\eta(\alpha_1,\ldots,\alpha_{n-1},\alpha_n\star\beta)+(-1)^{n}\eta(\beta\star\alpha_1,\alpha_2\ldots,\alpha_n).
\end{split}
\end{equation}
 From equations \eqref{com diag id1}-\eqref{com diag id6}, it follows that for any $\eta\in C^{n-1}_\lambda(A^*)$, 
 $$\langle \delta_{Hoch}(\varphi(\eta))(\alpha_1,\ldots,\alpha_n),\beta\rangle=\langle \varphi(b(\eta))(\alpha_1,\ldots,\alpha_{n-1},\alpha_n),\beta\rangle, \quad\mbox{for all }\alpha_1,\ldots,\alpha_n,\beta\in A^*.$$
Therefore, the diagram \eqref{subcomplex} commutes. The complex $(C^{\bullet-1}_{Hoch}(A^*,A),(-1)^\bullet\delta_{Hoch})$ is the same as the complex $(C^\bullet(r^\sharp),d_{r^\sharp})$ associated to the relative Rota-Baxter operator $r^\sharp: A^*\rightarrow A$. Thus, the diagram \eqref{subcomplex} implies that $(Im\varphi,d_{r^\sharp})$ is a subcomplex of the cochain complex $(C^\bullet(r^\sharp),d_{r^\sharp})$.

Let us now define a complex $(C^\bullet(A,\Delta_r),\delta_{inf})$ for the coboundary infinitesimal bialgebra $(A,\Delta_r)$ with $r\in \wedge^2 A$. 
\begin{itemize}
\item For $n=0$, define $C^0(A,\Delta_r):=0$;
\item For $n=1$, define $C^1(A,\Delta_r):=Hom(A,A)$;
\item For $n\geq 2$, define $C^n(A,\Delta_r):=C^n_{Hoch}(A)\oplus C^{n-1}_\lambda(A^*)$.   
\end{itemize}
To define the coboundary map $\delta_{inf}$, we first define an embedding $\mathfrak{i}:C^\bullet(A,\Delta_r)\rightarrow C^\bullet(A,A^*,r^\sharp)$. For $n=1$, the map $\mathfrak{i}:C^1(A,\Delta_r)=\mathrm{Hom}(A,A)\rightarrow C^1(A,A^*,r^\sharp)=\mathrm{Hom}(A,A)\oplus \mathrm{Hom}(A^*,A^*)$ is given by
$i(f)=(f,\bar{f})$, for $f\in \mathrm{Hom}(A,A)$. Here, the map $\bar{f}\in \mathrm{Hom}(A^*,A^*)$ is the dual of $f$.
\smallskip

\noindent For $n\geq 2$, let us recall that $C^n(A,A^*,r^\sharp)=\mathrm{Hom}(\otimes^n A,A)\oplus \mathrm{Hom}(\mathcal{A}^{n-1,1},A^*)\oplus\mathrm{Hom}(\otimes^{n-1}A^*,A),$ where $\mathcal{A}^{n-1,1}$ are all those $n$-tensor powers of $A$ and $A^*$ such that $A^*$ appears only once. Define the map $\mathfrak{i}:C^n(A,\Delta_r)\rightarrow C^n(A,A^*,r^\sharp)$ by 
$$\mathfrak{i}(f,\eta)=(f,\bar{f},\varphi(\eta))$$
for $f\in \mathrm{Hom}(\otimes^n A,A)$ and $ \eta\in C_\lambda^{n-1}(A^*)$. Here, $\bar{f}\in \mathrm{Hom}(\mathcal{A}^{n-1,1},A^*)$ is given by
$$\langle\bar{f}(a_1,\ldots,\underbrace{\alpha}_{i^{th}-\text{position}},\ldots,a_{n}),b\rangle:=(-1)^{n-i+1}\langle\alpha,f(a_1,\ldots,\underbrace{b}_{i^{th}-\text{position}},\ldots,a_n)\rangle,$$
for any $\alpha\in A^*$ and $a_1,\ldots,\widehat{a_i},\ldots,a_n,b\in A$.
Let $f\in \mathrm{Hom}(\otimes^n A,A)$, then observe that $\Omega(f+\bar{f})$ is skew-symmetric and thus by the definition of the coboundary $\mathcal{D}$, we have 
$$\mathcal{D}(\mathfrak{i} (f))=\mathcal{D}(f,\bar{f})=\partial(f+\bar{f}),\Omega(f+\bar{f})=\mathfrak{i}(\delta_{Hoch}(f),\varphi^{-1}(\Omega(f+\bar{f})) ).$$

For $n\geq 2$ and $(f,\eta)\in C^n(A,\Delta_r)$, 
$$ \mathcal{D}(\mathfrak{i} (f, \eta)) =\mathcal{D}(f,\bar{f}, \varphi(\eta))=( \partial (f+\bar{f}), (-1)^{n} d_{r^\sharp}\varphi(\eta) + \Omega (f+\bar{f})),$$
Since $(-1)^{n} d_{r^\sharp} \varphi(\eta) + \Omega (f+\bar{f})\in Im\varphi$, we get
$$\mathcal{D}(\mathfrak{i} (f, \eta))=\mathfrak{i}(\delta_{Hoch}f,\varphi^{-1}((-1)^{n} d_{r^\sharp} \varphi(\eta) + \Omega (f+\bar{f})))$$
Thus, $\mathcal{D}(\mathfrak{i}(f, \eta))\in Im(\mathfrak{i})$ for any $(f,\eta)\in C^\bullet(A,\Delta_r)$. Let us consider a map $\theta:Im(\mathfrak{i})\rightarrow C^\bullet(A,\Delta_r)$ given by
$\theta(\mathfrak{i}(f,\eta))=(f,\eta)$. Now, define the coboundary $\delta_{inf}:C^{\bullet}(A,\Delta_r)\rightarrow C^{\bullet+1}(A,\Delta_r)$ by the composition $\delta_{inf}:=\theta\circ \mathcal{D}\circ\mathfrak{i}$. The condition $\mathcal{D}(Im(\mathfrak{i}))\subseteq Im(\mathfrak{i})$ and the fact that $\mathfrak{i}\circ \theta|_{Im(\mathfrak{i})}=\mathrm{id}$ imply that $\delta_{inf}\circ \delta_{inf}=0$. Let us denote the cohomology of the cochain complex $(C^\bullet (A,\Delta_r),\delta_{inf})$ by $H^\bullet_{inf}(A,\Delta_r)$. We call it the \textit{cohomology of the coboundary skew-symmetric infinitesimal bialgebra $(A,\Delta_r)$}. 

Let us observe that by the definition of the coboundary map $\delta_{inf}$, it can be rewritten as 
$$\delta_{inf}(f,\eta)=(\delta_{Hoch}(f),\delta_r(\eta)+\varphi^{-1}(\Omega(f+\bar{f}))),$$
for any $(f,\eta)\in C^n(A,\Delta_r)$. Thus, we have a short exact sequence of complexes 
\begin{align*}
\xymatrix{
0 \ar[r] & (\oplus_{n\geq 2}C^{n-1}_\lambda (A^*), b) \ar[r]^{i} & (\oplus_{n\geq 1} C^n(A, \Delta_r), \delta_{inf}) \ar[r]^p & (\oplus_{n\geq 1} C^n_{\mathrm{Hoch}} (A), \delta_{\mathrm{Hoch}}) \ar[r] & 0,
}
\end{align*}
where $i(\eta) = (0, \eta)$ and $p (f,\eta) = f$ for any $f\in C^\bullet_{Hoch}(A)$ and $\eta\in C^\bullet_r (A)=C^{\bullet-1}_\lambda(A^*)$. Therefore, we have the following result.

\begin{thm}
There is a long exact sequence of cohomology spaces
\begin{align*}
\xymatrix{
\cdots \ar[r] & HC^{n-1}(A^*) \ar[r]^{H^n (i)} & H^n_{inf} (A, \Delta_r) \ar[r]^{H^n (p)} & H^n_{\mathrm{Hoch}} (A) \ar[r]^{\delta^n} & HC^{n} (A^*) \ar[r] & \cdots, 
}
\end{align*}
where the connecting homomorphism $\delta^n$ is given by $\delta^n [f] = [\varphi^{-1}(\Omega (f+\bar{f}))]$, for any $[f]\in H^n_{Hoch}(A)$.
\qed
\end{thm}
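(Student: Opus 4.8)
The plan is to obtain the long exact sequence as the standard connecting (zig-zag) sequence attached to the short exact sequence of cochain complexes
$$0 \to \Big(\bigoplus_{n\geq 2}C^{n-1}_\lambda (A^*),\, b\Big) \xrightarrow{i} \Big(\bigoplus_{n\geq 1} C^n(A, \Delta_r),\, \delta_{inf}\Big) \xrightarrow{p} \Big(\bigoplus_{n\geq 1} C^n_{\mathrm{Hoch}} (A),\, \delta_{\mathrm{Hoch}}\Big) \to 0$$
established immediately above, where $i(\eta) = (0, \eta)$ and $p(f, \eta) = f$. First I would invoke the elementary homological fact that a short exact sequence of cochain complexes of $\mathbb{K}$-vector spaces induces a long exact sequence in cohomology. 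Then I would identify the three cohomologies occurring in it: the $n$-th cohomology of the left-hand complex is $H^{n-1}$ of the cyclic complex $(C^\bullet_\lambda(A^*),\, b)$, which is by definition $HC^{n-1}(A^*)$; the $n$-th cohomology of the right-hand complex is $H^n_{\mathrm{Hoch}}(A)$; and the $n$-th cohomology of the middle complex is $H^n_{inf}(A, \Delta_r)$. Substituting these identifications into the zig-zag sequence produces exactly the displayed long exact sequence.

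It remains to pin down the connecting homomorphism $\delta^n : H^n_{\mathrm{Hoch}}(A) \to HC^n(A^*)$. Here I would run the usual diagram chase, using the explicit formula $\delta_{inf}(f,\eta) = \big(\delta_{\mathrm{Hoch}}(f),\ \delta_r(\eta) + \varphi^{-1}(\Omega(f+\bar{f}))\big)$ recorded during the construction of $(C^\bullet(A, \Delta_r),\, \delta_{inf})$. Given a Hochschild cocycle $f \in C^n_{\mathrm{Hoch}}(A)$, an obvious $p$-preimage is $(f, 0) \in C^n(A, \Delta_r)$; applying $\delta_{inf}$ gives $\big(\delta_{\mathrm{Hoch}} f,\ \varphi^{-1}(\Omega(f+\bar f))\big) = \big(0,\ \varphi^{-1}(\Omega(f+\bar f))\big)$, because $f$ is a cocycle. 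This element lies in the image of $i$, namely it equals $i\big(\varphi^{-1}(\Omega(f+\bar f))\big)$, so by the definition of the connecting morphism, $\delta^n[f]$ is the class of $\varphi^{-1}(\Omega(f+\bar f))$ in $HC^n(A^*)$, which is the asserted formula.

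The only genuinely non-formal ingredient, which I would state carefully, is that this recipe is well posed. One must know that $\Omega(f+\bar f)$ lies in the image of the monomorphism $\varphi : C^\bullet_\lambda(A^*) \hookrightarrow C^\bullet_{\mathrm{Hoch}}(A^*, A)$, so that $\varphi^{-1}(\Omega(f+\bar f))$ is meaningful as a cyclic cochain on $A^*$; this is precisely the skew-symmetry of $\Omega(f+\bar f)$ observed while building the complex $(C^\bullet(A, \Delta_r),\, \delta_{inf})$. Moreover, $\varphi^{-1}(\Omega(f+\bar f))$ is automatically a $b$-cocycle: applying $\delta_{inf}$ again to $\big(0,\ \varphi^{-1}(\Omega(f+\bar f))\big)$ yields $\big(0,\ b\,\varphi^{-1}(\Omega(f+\bar f))\big)$, which vanishes since $\delta_{inf}\circ\delta_{inf}=0$. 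Independence of $\delta^n[f]$ from the chosen cocycle representative of $[f]$ and from the chosen preimage is the routine verification inherent in the construction of the zig-zag map, immediate here because $i$ is injective and $p$ is surjective. I anticipate no real obstacle beyond this bookkeeping, the substance of the result having already been invested in setting up the complex $(C^\bullet(A, \Delta_r),\, \delta_{inf})$ together with its short exact sequence.
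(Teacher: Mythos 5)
Your proposal is correct and follows exactly the paper's route: the theorem is deduced as the standard long exact sequence associated to the short exact sequence of complexes $0 \to (C^{\bullet-1}_\lambda(A^*), b) \to (C^\bullet(A,\Delta_r), \delta_{inf}) \to (C^\bullet_{\mathrm{Hoch}}(A), \delta_{\mathrm{Hoch}}) \to 0$, with the connecting map computed by the usual zig-zag using the explicit formula for $\delta_{inf}$. Your additional remark on well-posedness (that $\Omega(f+\bar f)$ lies in $\mathrm{Im}\,\varphi$) is precisely the point the paper settles while constructing the complex, so nothing is missing.
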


\subsection{Deformations of coboundary skew-symmetric infinitesimal bialgebras}
Let ${\sf R}$ be a local pro-Artinian $\mathbb{K}$-algebra and thus, equipped with an augmentation $\epsilon: {\sf R}\rightarrow \mathbb{K}$. Let $A$ be an associative algebra and $r\in \wedge^2 A$. Then, any coboundary skew-symmetric infinitesimal bialgebra $(A,\Delta_r)$ can be realised as a skew-symmetric ${\sf R}$-infinitesimal bialgebra, where the ${\sf R}$-module structures on $A$ is given by  
$$r.a=\epsilon(r)a, \quad\mbox{for any }r\in {\sf R},~ a\in A.$$

\begin{defn}\label{R-def of SIB}
Let $A$ be an associative algebra and $r\in \wedge^2 A$ be an associative $r$-matrix. An ${\sf R}$-deformation of a coboundary skew-symmetric infinitesimal bialgebra $(A,\Delta_r)$ is a triplet $(R\otimes A,\mu_R,r_R)$ consisting of an ${\sf R}$-linear associative product $\mu_{\sf R}:({\sf R}\otimes A)\otimes_R({\sf R}\otimes A)\rightarrow ({\sf R}\otimes A)$ and a skew-symmetric $r$-matrix $r_{\sf R}\in ({\sf R}\otimes A)\otimes_R ({\sf R}\otimes A)$ such that 
\begin{enumerate}[(i)]
\item the map $\epsilon \otimes \mathrm{id}:({\sf R}\otimes A,\mu_{\sf R}) \rightarrow (A,\mu)$ is an ${\sf R}$-associative algebra homomorphism, and 
\item $(\epsilon\otimes \mathrm{id})\otimes_R(\epsilon\otimes \mathrm{id})(r_R)=r$.
\end{enumerate}

\end{defn}

Let $(A,\Delta_r)$ be a coboundary skew-symmetric infinitesimal bialgebra. Two ${\sf R}$-deformations of $(A,\Delta_r)$ given by the triplets $(R\otimes A,\mu_R,r_R)$ and $(R\otimes A,\mu_R^\prime,r_R^\prime)$ are said to be \textit{equivalent} if there exists an isomorphism of coboundary skew-symmetric ${\sf R}$-infinitesimal bialgebras $$\Phi: (R\otimes A,\mu_R,r_R)\rightarrow (R\otimes A,\mu_R^\prime,r_R^\prime)\quad \text{ satisfying }\quad (\epsilon\otimes \mathrm{id})\circ \Phi=\epsilon\otimes \mathrm{id}.$$  

Let ${\sf R}=\mathbb{K}[t]/(t^2)$, then an  ${\sf R}$-deformation $(R\otimes A,\mu_R,r_R)$ is called an \textit{infinitesimal deformation} of $(A,\Delta_r)$. There exists $\mu_0,\mu_1\in C^2_{Hoch}(A)$ and $r_0,r_1\in C^1_\lambda(A^*)$ such that 
$\mu_R=\mu_0+t \mu_1$ and $r_R=r_0+t r_1.$ 
The conditions (i) and (ii) of Definition \ref{R-def of SIB} imply that $\mu_0=\mu$ and $r_0=r$.

\begin{thm}
Let $(A,\Delta_r)$ be a coboundary skew-symmetric infinitesimal bialgebra. Let $\mu_1\in C^2_{Hoch}(A)$ and $r_1\in C^1_\lambda(A^*)$. Then an infinitesimal deformation of $(A,\Delta_r)$ is determined by the pair $(\mu_1,r_1)$ if and only if $(\mu_1,r_1) \in C^2(A,\Delta_r)$ is a $2$-cocycle.
\end{thm}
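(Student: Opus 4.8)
The plan is to unwind what it means for the pair $(\mu_1, r_1)$ to define an infinitesimal deformation of $(A, \Delta_r)$, and then recognize those conditions as precisely the cocycle equation $\delta_{inf}(\mu_1, r_1) = 0$ in the complex $(C^\bullet(A,\Delta_r), \delta_{inf})$. The key bookkeeping device is the embedding $\mathfrak{i} : C^\bullet(A,\Delta_r) \rightarrow C^\bullet(A, A^*, r^\sharp)$ and the characterization of $\delta_{inf}$ as $\theta \circ \mathcal{D} \circ \mathfrak{i}$. Since we already know (Theorem \ref{infinitesimal-cocycle}) that infinitesimal deformations of the relative Rota-Baxter algebra $(A, A^*, r^\sharp)$ correspond to $2$-cocycles for $\mathcal{D}$, the strategy is to reduce the bialgebra deformation problem to the relative Rota-Baxter deformation problem under $\mathfrak{i}$.

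First I would spell out the data: by the discussion preceding the theorem, an infinitesimal deformation of $(A,\Delta_r)$ is given by $\mu_{\sf R} = \mu + t\mu_1$ and $r_{\sf R} = r + t r_1$ with $\mu_1 \in C^2_{Hoch}(A)$ and $r_1 \in C^1_\lambda(A^*)$, subject to: (a) $\mu_{\sf R}$ is ${\sf R}$-associative, which on comparing coefficients of $t$ is exactly $\delta_{Hoch}(\mu_1) = 0$; and (b) $r_{\sf R}$ is a skew-symmetric $r$-matrix for $\mu_{\sf R}$, i.e. it satisfies the associative Yang-Baxter equation over ${\sf R}$ and is skew-symmetric. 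The skew-symmetry of $r_{\sf R}$ forces $r_1 \in \wedge^2 A$, and since $r_1$ is then identified with a cyclic cochain in $C^1_\lambda(A^*)$ via the duality $\wedge^2 A \cong C^1_\lambda(A^*)$, this is automatically encoded by requiring $r_1 \in C^1_\lambda(A^*)$. The remaining content of (b), the order-$t$ part of the associative Yang-Baxter equation for $\mu + t\mu_1$ and $r + t r_1$, is the equation I must match with the second component of $\delta_{inf}(\mu_1, r_1)$.

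Next I would use the translation between skew-symmetric $r$-matrices and relative Rota-Baxter operators (the Proposition identifying $r \leftrightarrow r^\sharp$ and, more precisely, the fact that $r_{\sf R}$ is an ${\sf R}$-matrix for $\mu_{\sf R}$ iff $(({\sf R}\otimes A, \mu_{\sf R}), ({\sf R}\otimes A^*, \text{coadj}), r_{\sf R}^\sharp)$ is an ${\sf R}$-relative Rota-Baxter algebra). Under this dictionary, $\mu_1$ on $A$ induces $\mu_1$ together with its dual contribution $\bar\mu_1$ on the bimodule $A^*$ (the coadjoint actions deform accordingly), and $r_1$ corresponds to $r_1^\sharp = \varphi(r_1)$ up to the identifications already fixed in the text. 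So an infinitesimal deformation of $(A, \Delta_r)$ is the same as an infinitesimal deformation of the relative Rota-Baxter algebra $(A, A^*, r^\sharp)$ whose underlying cochain lies in the image of $\mathfrak{i}$, namely $(\mu_1, \bar\mu_1, \varphi(r_1)) = \mathfrak{i}(\mu_1, r_1)$. By Theorem \ref{infinitesimal-cocycle} this is equivalent to $\mathcal{D}(\mathfrak{i}(\mu_1, r_1)) = 0$; applying $\theta$ (which is a left inverse of $\mathfrak{i}$ on $\mathrm{Im}(\mathfrak{i})$, and $\mathcal{D}$ preserves $\mathrm{Im}(\mathfrak{i})$) gives $\delta_{inf}(\mu_1, r_1) = 0$, i.e. $(\mu_1, r_1)$ is a $2$-cocycle in $C^\bullet(A, \Delta_r)$. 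Conversely, a $2$-cocycle yields, via $\mathfrak{i}$, a $2$-cocycle for $\mathcal{D}$, hence an infinitesimal deformation of $(A, A^*, r^\sharp)$ of the special shape $\mathfrak{i}(\mu_1, r_1)$, which by the dictionary is an infinitesimal deformation of $(A, \Delta_r)$.

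The main obstacle I anticipate is the careful verification that the order-$t$ term of the associative Yang-Baxter equation for $(\mu + t\mu_1, r + t r_1)$ corresponds, under the duality $\langle \varphi(\eta)(\alpha_1,\dots,\alpha_n), \alpha_{n+1}\rangle = \eta(\alpha_1,\dots,\alpha_{n+1})$ and the formula for $\bar f$, exactly to the vanishing of $\delta_r(r_1) + \varphi^{-1}(\Omega(\mu_1 + \bar\mu_1))$, i.e. the second component of $\delta_{inf}(\mu_1, r_1)$; this amounts to re-deriving the commutativity of diagram \eqref{subcomplex} at the level of the deformation equations and checking the sign conventions match those already recorded in equations \eqref{com diag id1}--\eqref{com diag id6}. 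I would handle this by dualizing the deformed Yang-Baxter equation term by term exactly as in those computations, using that the coadjoint $A$-actions on $A^*$ deform by $\mu_1$ and that $r_1 \in \wedge^2 A$ gives a cyclic $1$-cochain, so that no new phenomena beyond the static case appear; everything else is a direct application of Theorem \ref{infinitesimal-cocycle} together with the already-established fact $\mathcal{D}(\mathrm{Im}(\mathfrak{i})) \subseteq \mathrm{Im}(\mathfrak{i})$.
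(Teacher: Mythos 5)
Your proposal is correct and follows essentially the same route as the paper: compare coefficients of $t$ in the associativity and $r$-matrix conditions, translate the latter through the correspondence $r_{\sf R}\leftrightarrow r_{\sf R}^\sharp$ into an infinitesimal deformation of the relative Rota-Baxter algebra $(A,A^*,r^\sharp)$ lying in $\mathrm{Im}(\mathfrak{i})$, and conclude via $\delta_{inf}=\theta\circ\mathcal{D}\circ\mathfrak{i}$. The paper states the two equivalences more tersely (as ``or equivalently'' assertions), so your explicit reduction to Theorem \ref{infinitesimal-cocycle} together with $\mathcal{D}(\mathrm{Im}(\mathfrak{i}))\subseteq\mathrm{Im}(\mathfrak{i})$ is exactly the justification the paper leaves implicit.
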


\begin{proof}
The triplet $(R\otimes A,\mu_R:=\mu+t \mu_1,r_R:=r+ t r_1)$ is an ${\sf R}$-deformation if and only if 
\begin{enumerate}[(i)]
\item $(R\otimes A,\mu_R:=\mu+t \mu_1)$ is an $R$-associative algebra, or equivalently $\delta_{Hoch}(\mu_1)=0$, and
\item $r_R:=r+ t r_1\in ({\sf R}\otimes A)\otimes_R ({\sf R}\otimes A)$ is a skew-symmetric $r$-matrix of the $R$-associative algebra $(R\otimes A,\mu_R:=\mu+t \mu_1)$, or equivalently $\delta_r(r_1)+\varphi^{-1}(\Omega(\mu_1+\bar{\mu_1}))=0$.
\end{enumerate} 
Therefore, the triplet $(R\otimes A,\mu_R:=\mu+t \mu_1,r_R:=r+ t r_1)$ is an ${\sf R}$-deformation if and only if
$$\delta_{inf}(\mu_1,r_1)=(\delta_{Hoch}(\mu_1),\delta_r(r_1)+\varphi^{-1}(\Omega(\mu_1+\bar{\mu_1})))=0.$$
Hence, the theorem follows.
\end{proof}

\begin{thm}
The equivalence classes of infinitesimal deformations of $(A,\Delta_r)$ corresponds bijectively to cohomology classes in $H^2(A,\Delta_r)$.
\end{thm}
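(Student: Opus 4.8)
The plan is to transport, via the embedding $\mathfrak{i}$ and the commuting square \eqref{subcomplex}, the argument used for relative Rota-Baxter algebras (Theorem \ref{infinitesimal-cocycle} and the bijection that follows it) to the subcomplex $C^\bullet(A,\Delta_r)$. By the previous theorem, an infinitesimal deformation of $(A,\Delta_r)$ is determined by a $2$-cocycle $(\mu_1,r_1)\in C^2(A,\Delta_r)=C^2_{Hoch}(A)\oplus C^1_\lambda(A^*)$, and conversely every $2$-cocycle arises this way. Hence it suffices to show that two infinitesimal deformations, determined by $2$-cocycles $(\mu_1,r_1)$ and $(\mu_1',r_1')$, are equivalent if and only if $(\mu_1,r_1)-(\mu_1',r_1')=\delta_{inf}(\phi_1)$ for some $\phi_1\in C^1(A,\Delta_r)=\mathrm{Hom}(A,A)$; the rule sending a deformation to the class $[(\mu_1,r_1)]$ is then a well-defined bijection onto $H^2_{inf}(A,\Delta_r)$.

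For the forward implication, let $\Phi:({\sf R}\otimes A,\mu_{\sf R},r_{\sf R})\rightarrow({\sf R}\otimes A,\mu_{\sf R}',r_{\sf R}')$ be an isomorphism of coboundary skew-symmetric ${\sf R}$-infinitesimal bialgebras with $(\epsilon\otimes\mathrm{id})\circ\Phi=\epsilon\otimes\mathrm{id}$. As in the earlier equivalence arguments, this compatibility forces $\Phi=\mathrm{id}_A+t\,\phi_1$ for a unique $\phi_1\in\mathrm{Hom}(A,A)$. Being an isomorphism of coboundary skew-symmetric infinitesimal bialgebras amounts to: (a) $\Phi$ is an ${\sf R}$-algebra isomorphism from $\mu_{\sf R}=\mu+t\mu_1$ to $\mu_{\sf R}'=\mu+t\mu_1'$; and (b) $(\Phi\otimes\Phi)(r_{\sf R})=r_{\sf R}'$, i.e. $(\Phi\otimes\Phi)(r+tr_1)=r+tr_1'$. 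Comparing coefficients of $t$ in (a) yields $\mu_1-\mu_1'=\delta_{Hoch}\phi_1$, and comparing coefficients of $t$ in (b) yields $r_1-r_1'=-(\phi_1\otimes\mathrm{id}+\mathrm{id}\otimes\phi_1)(r)$.

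It remains to identify the right-hand side of the second identity with the $C^1_\lambda(A^*)$-component of $\delta_{inf}(\phi_1)$. From the explicit form of $\delta_{inf}$ — and since $C^1(A,\Delta_r)=\mathrm{Hom}(A,A)$ has no cyclic summand — one has $\delta_{inf}(\phi_1)=\big(\delta_{Hoch}\phi_1,\ \varphi^{-1}(\Omega(\phi_1+\bar{\phi_1}))\big)$. So the task is the identity $-(\phi_1\otimes\mathrm{id}+\mathrm{id}\otimes\phi_1)(r)=\varphi^{-1}(\Omega(\phi_1+\bar{\phi_1}))$ in $C^1_\lambda(A^*)\cong\wedge^2 A$; this is exactly the $n=1$ instance of the compatibility already packaged in $\mathfrak{i}$ and the square \eqref{subcomplex}, obtained by unwinding $\Omega(\phi_1+\bar{\phi_1})(\alpha)=r^\sharp(\bar{\phi_1}\alpha)-\phi_1(r^\sharp\alpha)$, applying $\varphi^{-1}$, and using $\langle\beta,r^\sharp(\alpha)\rangle=r(\alpha,\beta)$ together with the definition of the dual $\bar{\phi_1}$ (the sign being absorbed by the fixed conventions). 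Combined with the $\delta_{Hoch}$-part this gives $(\mu_1,r_1)-(\mu_1',r_1')=\delta_{inf}(\phi_1)$. For the converse, given any $\phi_1\in\mathrm{Hom}(A,A)$ one sets $\mu_1':=\mu_1-\delta_{Hoch}\phi_1$, $r_1':=r_1+(\phi_1\otimes\mathrm{id}+\mathrm{id}\otimes\phi_1)(r)$ and runs the same computation backwards to see that $\Phi=\mathrm{id}_A+t\,\phi_1$ is an equivalence between the two deformations, so cohomologous $2$-cocycles give equivalent deformations. Together with the previous theorem this establishes the asserted bijection. I expect the only real obstacle to be the dualization identity just mentioned — matching the coadjoint/$r$-matrix deformation term with the cyclic coboundary $\varphi^{-1}\circ\Omega$ — which is notation-heavy but is precisely the $n=1$ case of the compatibility already used to construct $\delta_{inf}$, and hence should go through with the paper's sign conventions.
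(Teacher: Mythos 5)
Your argument is correct and is essentially the paper's own proof: both reduce to the previous theorem to identify deformations with $2$-cocycles, write $\Phi=\mathrm{id}+t\phi_1$, and compare $t$-coefficients of the algebra-isomorphism condition and of $(\Phi\otimes\Phi)(r_{\sf R})=r_{\sf R}'$ to conclude $(\mu_1,r_1)-(\mu_1',r_1')=\delta_{inf}(\phi_1)$. You in fact supply more detail than the paper on the one nontrivial identification, namely matching the $t$-coefficient $-(\phi_1\otimes\mathrm{id}+\mathrm{id}\otimes\phi_1)(r)$ with $\varphi^{-1}(\Omega(\phi_1+\bar{\phi_1}))$, which the paper simply asserts as "or equivalently."
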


\begin{proof}
Two infinitesimal deformations $(R\otimes A,\mu_R:=\mu+t \mu_1,r_R:=r+ t r_1)$ and $(R\otimes A,\mu_R^\prime:=\mu+t \mu_1^\prime,r_R^\prime:=r+ t r_1^\prime)$ of $(A,\Delta_r)$ are equivalent if and only if there exists an isomorphism of coboundary skew-symmetric ${\sf R}$-infinitesimal bialgebras $$\Phi: (R\otimes A,\mu_R,r_R)\rightarrow (R\otimes A,\mu_R^\prime,r_R^\prime)\quad \text{ satisfying }\quad (\epsilon\otimes \mathrm{id})\circ \Phi=\epsilon\otimes \mathrm{id}.$$
The condition $(\epsilon\otimes \mathrm{id})\circ \Phi=\epsilon\otimes \mathrm{id}$ implies that there exists an element $\phi_1\in C^1_{Hoch}(A)$ such that $\Phi=\mathrm{id}+t \phi_1$. The map $\Phi: (R\otimes A,\mu_R,r_R)\rightarrow (R\otimes A,\mu_R^\prime,r_R^\prime)$ is an isomorphism if and only if 
\begin{enumerate}[(i)]
\item The map $\mathrm{id}+t \phi_1:(R\otimes A,\mu_R:=\mu+t \mu_1)\rightarrow (R\otimes A,\mu_R^\prime:=\mu+t \mu_1^\prime)$ is an isomorphism of $R$-associative algebras, or equivalently $\mu_1-\mu_1^\prime=\delta_{Hoch}(\phi_1),$ and
\item $(\mathrm{id}+t \phi_1)\otimes_R(\mathrm{id}+t \phi_1)(r+t r_1)=r+t r_1^\prime$ or equivalently $r_1-r_1^\prime=\varphi^{-1}(\Omega(\phi_1+\bar{\phi_1}))$.
\end{enumerate}
The pairs $(\mu_1,r_1)$ and $(\mu_1^\prime,r_1^\prime)$ are $2$-cocycles in $C^2(A,\Delta_r)$. By $(i)$ and $(ii)$ it is clear that $\Phi=\mathrm{id}+t \phi_1: (R\otimes A,\mu_R,r_R)\rightarrow (R\otimes A,\mu_R^\prime,r_R^\prime)$ is an isomorphism if and only if $(\mu_1,r_1)-(\mu_1^\prime,r_1^\prime)=\delta_{inf}(\phi_1)$. Thus, $(R\otimes A,\mu_R,r_R)$ and $(R\otimes A,\mu_R^\prime,r_R^\prime)$ are equivalent infinitesimal deformations if and only if $(\mu_1,r_1)$ and $(\mu_1^\prime,r_1^\prime)$ belong to the same cohomology class in $H^2(A,\Delta_r)$.
\end{proof}
\section{Homotopy relative Rota-Baxter operators}\label{sec-7}
In this section, we introduce homotopy relative Rota-Baxter operators on bimodules over strongly homotopy associative algebras. Our notion will generalize (strict) Rota-Baxter operators on $A_\infty$-algebras introduced in \cite{das1}. We compare homotopy relative Rota-Baxter algebras and homotopy relative Rota-Baxter Lie algebras introduced in \cite{laza-rota}. Finally, we construct a $pre\text{-}Lie_\infty [1]$-algebra from any $Dend_\infty [1]$-algebra generalizing the similar result from non-homotopic case \cite{aguiar-pre}. This construction suitably fits with the various relations among dendriform algebras, pre-Lie algebras, associative algebras, Lie algebras and relative Rota-Baxter algebras in the homotopy context. 

\begin{defn}
An $A_\infty [1]$-algebra is a graded vector space $A = \bigoplus_{i \in \mathbb{Z}} A_i$ together with a collection of degree $1$ multilinear maps $\{ \mu_k : A^{\otimes k} \rightarrow A \}_{k \geq 1}$ satisfying the following identities
\begin{align}\label{a-inf-iden}
\sum_{i+j = n+1}  \sum_{\lambda = 1}^j (-1)^{|a_1|+ \cdots + |a_{\lambda -1}|} ~\mu_j ( a_1, \ldots, a_{\lambda -1} , \mu_i ( a_\lambda, \ldots, a_{\lambda + i -1}), a_{\lambda +i}, \ldots, a_{n} ) = 0, ~ \text{ for } n \geq 1.
\end{align}
\end{defn}

Note that $A_\infty [1]$-algebra structure on a graded vector space $A$ can be described by Maurer-Cartan element in a suitable graded Lie algebra \cite{stasheff}. Let $A$ be a graded vector space. Consider the free reduced tensor algebra $\overline{T} (A) = \bigoplus_{n \geq 1} A^{\otimes n}$ and for each $n \in \mathbb{Z}$, define $C^n(A) = \mathrm{Hom}_n (\overline{T}(A), A)$ the space of degree $n$ linear maps. An element $\mu \in \mathrm{Hom}_n (\overline{T}(A), A)$ is the sum of degree $n$  multilinear maps $\mu_k : A^{\otimes k} \rightarrow A$, for $ k \geq 1$.

With these notations $\bigoplus_{n \in \mathbb{Z}} C^n (A) = \bigoplus_{n \in \mathbb{Z}} \mathrm{Hom}_n (T(A), A)$ is a graded Lie algebra with the bracket given by $[\mu_k, \gamma_l ] = \mu_k \circ \gamma_l - (-1)^{mn} \gamma_l \circ \mu_k$, where
\begin{align*}
(\mu_k \circ \gamma_l ) (a_1, \ldots, a_{k+l-1}) = \sum_{i = 1}^k (-1)^{  |a_1|+ \cdots + |a_{i -1}|  }     \mu_k  ( a_1, \ldots, a_{i -1}, \gamma_l ( a_i , \ldots, a_{i + l -1}), a_{i + l}, \ldots, a_{k+l-1} ),
\end{align*}
for $\mu = \sum_{k \geq 1} \mu_k \in C^m (A)$ and $\gamma = \sum_{l \geq 1} \gamma_l \in C^n(A).$ An element $\mu \in C^1(A) = \sum_{k \geq 1} \mu_k$ is a Maurer-Cartan element in the above graded Lie algebra if and only if $(A, \{ \mu_k \}_{k \geq 1})$ is an $A_\infty [1]$-algebra.

\begin{defn}
Let $(A, \{ \mu_k \}_{k \geq 1})$ be an $A_\infty [1]$-algebra. A bimodule over it consists of a graded vector space $M = \bigoplus_{i \in \mathbb{Z}} M_i$ together with a collection of degree $1$ maps $\{ \eta_k : \mathcal{A}^{k-1, 1} \rightarrow M \}_{k \geq 1}$ satisfying the identities (\ref{a-inf-iden}) with exactly one of $a_1, \ldots, a_n$ is from $M$ and the corresponding multilinear operation $\mu_i$ or $\mu_j$ replaced by $\eta_i$ or $\eta_j$. Here, $\mathcal{A}^{k-1,1}$ denotes all those $k$-tensor powers of $A$ and $M$, in which $M$ appears only once.
\end{defn} 

It follows from the definition that any $A_\infty[1]$-algebra is a bimodule over itself with $\eta_k = \mu_k$, for $k \geq 1$. This is called the adjoint bimodule.

\subsection{Homotopy relative Rota-Baxter operators}
Here, we use a homotopy version of Theorem \ref{das-gla} to describe relative Rota-Baxter operators in the homotopy context. 

Let $(A, \{\mu_k \}_{k \geq 1})$ be an $A_\infty [1]$-algebra and $(M, \{ \eta_k \}_{k \geq 1})$ be a bimodule over it. We consider the graded Lie algebra
\begin{align*}
L = \big(   \bigoplus_{n \in \mathbb{Z}} C^n (A \oplus M) = \bigoplus_{ n \in \mathbb{Z}} \mathrm{Hom}_n ( \overline{T}(A \oplus M), A \oplus M), ~ [~,~] \big).
\end{align*}
It is easy to see that $\mathfrak{a} = \bigoplus_{n \in \mathbb{Z}} \mathrm{Hom}_n ( \overline{T}(M), A)$ is an abelian subalgebra of $L$. Let $P : L \rightarrow \mathfrak{a}$ be the projection onto the subspace $\mathfrak{a}$. Then $\triangle = \sum_{k \geq 1} (\mu_k + \eta_k) \in \mathrm{ker}(P)_1$ and satisfies $[ \triangle, \triangle ] = 0$. Therefore, we get that $(L, \mathfrak{a}, P, \triangle )$ is a $V$-data. Hence by Theorem \ref{Voro-JPAA1}, we get an $L_\infty [1]$-algebra generalizing the graded Lie algebra of Theorem \ref{das-gla} to the homotopy context.

\begin{thm}
Let $(A, \{\mu_k \}_{k \geq 1})$ be an $A_\infty [1]$-algebra and $(M, \{ \gamma_k \}_{k \geq 1})$ be a bimodule over it. Then there is an $L_\infty [1]$-algebra structure on the graded vector space $\mathfrak{a} = \bigoplus_{n \in \mathbb{Z}} \mathrm{Hom}_n ( T(M), A)$. Moreover, this $L_\infty[1]$-algebra is weakly filtered. \qed
\end{thm}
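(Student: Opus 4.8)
The plan has two parts: produce the $L_\infty[1]$-structure from Voronov's construction, and then equip $\mathfrak a$ with a filtration and verify the axioms of Definition \ref{weakly filtered}.

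For the first part I would just record that $(L,\mathfrak a,P,\triangle)$ is a $V$-data: $L=\bigoplus_{n\in\mathbb Z}C^n(A\oplus M)$ with the graded Gerstenhaber bracket is a graded Lie algebra, the graded subspace $\mathfrak a=\bigoplus_{n\in\mathbb Z}\mathrm{Hom}_n(\overline{T}(M),A)$ is an abelian Lie subalgebra, $P\colon L\to\mathfrak a$ is the projection and $\mathrm{ker}(P)$ is a Lie subalgebra, and $\triangle=\sum_{k\ge1}(\mu_k+\eta_k)$ lies in $\mathrm{ker}(P)_1$ with $[\triangle,\triangle]=0$; this last identity is exactly the collection of the $A_\infty[1]$-relations \eqref{a-inf-iden} for $(A,\{\mu_k\})$ together with the bimodule relations for $(M,\{\eta_k\})$. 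Theorem \ref{Voro-JPAA1} then yields the $L_\infty[1]$-algebra $(\mathfrak a,\{l_k\}_{k\ge1})$ with $l_k(P_1,\dots,P_k)=P[\cdots[[\triangle,P_1],P_2],\dots,P_k]$.

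For the second part I would take the \emph{arity} filtration: $\mathcal F_p\mathfrak a$ consists of those $P=\sum_{k\ge1}P_k$, with $P_k\colon M^{\otimes k}\to A$, for which $P_k=0$ whenever $k<p$. This is a descending filtration with $\mathcal F_1\mathfrak a=\mathfrak a$, and $\mathfrak a$ is complete with respect to it since a linear map out of $\overline{T}(M)=\bigoplus_{k\ge1}M^{\otimes k}$ is just an unconstrained family of maps $M^{\otimes k}\to A$, whence $\mathfrak a\cong\varprojlim_p\mathfrak a/\mathcal F_p\mathfrak a$. The remaining point, which is the crux, is that $l_k(\mathfrak a,\dots,\mathfrak a)\subset\mathcal F_k\mathfrak a$ for all $k\ge1$; this gives Definition \ref{weakly filtered} with $n=1$. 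To prove it I would run a bidegree count in the spirit of Section \ref{sec-3}: by the argument of Proposition \ref{bidegree and G-bracket} the Gerstenhaber bracket on $C^\bullet(A\oplus M)$ is additive on $A|M$-bidegrees, the summand $\mu_m+\eta_m$ of $\triangle$ is homogeneous of bidegree $(m-1)\,|\,0$, and a map $M^{\otimes p}\to A$ in $\mathfrak a$ has bidegree $-1\,|\,p$. Hence, for $P_i$ of arity $p_i$, the contribution of $\mu_m+\eta_m$ to $[\cdots[[\triangle,P_1],\dots],P_k]$ has bidegree $(m-1-k)\,|\,(p_1+\cdots+p_k)$, and since $P$ retains only the bidegree $-1\,|\,\ast$ part, only the term $m=k$ survives. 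Thus $l_k(P_1,\dots,P_k)$ is a map $M^{\otimes(p_1+\cdots+p_k)}\to A$ with $p_1+\cdots+p_k\ge k$, so it lies in $\mathcal F_k\mathfrak a$. For general arguments one expands $l_k$ multilinearly over the arity components, noting that for each fixed output arity only finitely many terms occur, so the expansion converges in $\mathfrak a$ and still lands in $\mathcal F_k\mathfrak a$.

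The hardest part, in my view, is exactly this last bidegree analysis: pinning down that the projection $P$ isolates the arity-$k$ part of $\triangle$ inside $l_k$, and justifying the convergence of the multilinear expansion so that the arity-raising conclusion stays valid when elements of $\mathfrak a$ have infinitely many nonzero arity components (the generic case for genuine $A_\infty[1]$-algebras). Checking $[\triangle,\triangle]=0$ and that $\mathrm{ker}(P)$ is a graded Lie subalgebra is routine and parallels the $V$-data verifications already carried out earlier in the paper, and passing everything through the d\'ecalage to the unshifted $L_\infty$-language is standard.
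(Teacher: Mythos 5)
Your proposal follows exactly the paper's route: the paper constructs the same $V$-data $(L,\mathfrak a,P,\triangle)$ with $\triangle=\sum_{k\ge1}(\mu_k+\eta_k)$, invokes Theorem \ref{Voro-JPAA1}, and then simply records the arity filtration $\mathcal F_n(\mathfrak a)=\bigoplus_{i\ge n}\mathrm{Hom}(M^{\otimes i},A)$ without further verification. Your bidegree count showing $l_k(\mathfrak a,\dots,\mathfrak a)\subset\mathcal F_k\mathfrak a$ is a correct filling-in of a detail the paper leaves implicit, not a different approach.
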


Note that the filtration here is given by $\mathcal{F}_n ( \mathfrak{a}) = \bigoplus_{i \geq n} \mathrm{Hom} ( M^{\otimes i} , A )$, for $n \geq 1$.
Inspired by the characterization of relative Rota-Baxter operators in the usual case, we define the following definition.

\begin{defn}
Let $(A, \{ \mu_k \}_{k \geq 1})$ be an $A_\infty [1]$-algebra and $(M, \{\gamma_k \}_{k \geq 1})$ be a bimodule. A Maurer-Cartan element $T =\sum_{k \geq 1} T_k \in \mathrm{Hom}_0 ( T(M), A)$ in the above $L_\infty [1]$-algebra is called a homotopy relative Rota-Baxter operator on $(M, \{ \gamma_k \}_{k \geq 1})$ over the $A_\infty [1]$-algebra $(A, \{ \mu_k \}_{k \geq 1})$.
\end{defn}

In the next, we give a characterization of a homotopy relative Rota-Baxter operator $T = \sum_{k \geq 1} T_k$ in terms of its components.

\begin{prop}
An element $T = \sum_{k \geq 1} T_k \in \mathrm{Hom}_0 ( T(M), A) $ is a homotopy relative Rota-Baxter operator if and only if its components satisfy the following identities: for each $p \geq 1$,
\begin{align}\label{homo-rota-iden}
&\sum_{k_1 + \cdots + k_n = p} \frac{1}{n!}~\mu_n \big( T_{k_1} (u_1, \ldots, u_{k_1}), T_{k_2} (u_{k_1 + 1}, \ldots, u_{k_1 + k_2}), \ldots, T_{k_n } (u_{k_1 + \cdots + k_{n-1} +1} , \ldots, u_{p})   \big) \\
&=  \sum_{i=1}^n \sum_{ \substack{j+k_1+ \cdots + k_{i-1} +1 \\ + k_{i+1} + \cdots + k_n + t = p}} \frac{(-1)^{|u_1| + \cdots + |u_j|}}{(n-1)!}  ~ T_{j+1+t} \bigg(   u_1, \ldots, u_j , \eta \big(   T_{k_1} ( u_{j+1} , \ldots u_{j+ k_1} ), \ldots, 
T_{k_{i-1}} ( \cdots , u_{j+ k_1 + \cdots + k_{i-1}} ), \nonumber\\ 
&  \qquad \qquad \qquad \qquad u_{j + k_1 + \cdots + k_{i-1} +1}, T_{k_{i+1}} (\ldots, u_{j+k_1+ \cdots + k_{i-1} +1 + k_{i+1}}), \ldots, T_{k_n} (\ldots, u_{p-t}) \big), u_{p-t+1}, \ldots, u_p \bigg). \nonumber
\end{align}
\end{prop}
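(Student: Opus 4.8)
By construction, $T=\sum_{k\ge1}T_k$ being a homotopy relative Rota--Baxter operator means exactly that $T$ is a Maurer--Cartan element of the $L_\infty[1]$-algebra on $\mathfrak{a}=\bigoplus_{n\in\mathbb{Z}}\mathrm{Hom}_n(\overline{T}(M),A)$ coming from the $V$-data $(L,\mathfrak{a},P,\triangle)$ with $\triangle=\sum_{k\ge1}(\mu_k+\eta_k)$. So the plan is to unwind this Maurer--Cartan equation into one identity between multilinear maps $M^{\otimes p}\to A$ for each $p\ge1$. Using the formula for $l_k$ from Theorem~\ref{Voro-JPAA1}, the condition $\sum_{k\ge1}\tfrac1{k!}l_k(T,\dots,T)=0$ reads
$$\sum_{n\ge1}\frac1{n!}\,P\big[\cdots[[\triangle,T],T],\dots,T\big]=0,$$
with $n$ copies of $T$ in the $n$-th summand. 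First I would expand each iterated Gerstenhaber bracket by multilinearity in $T=\sum_kT_k$ and $\triangle=\sum_k(\mu_k+\eta_k)$, and then isolate the terms surviving the projection $P:L\to\mathfrak{a}$ onto $\mathrm{Hom}(\overline{T}(M),A)$.

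The main structural step is a placement analysis, in the spirit of the strict case (Theorem~\ref{das-gla}): $\mu_k$ has output and all inputs in $A$; $\eta_k$ has output in $M$ and inputs in $\mathcal{A}^{k-1,1}$ (one slot in $M$, the rest in $A$); each $T_k$ has output in $A$ and all inputs in $M$. Iterating $[-,T]$ therefore inserts a copy of $T$ either into a slot of type $A$, or (in the ``$T\circ(-)$'' summand) wraps it around a partial expression whose output lies in $M$, which forces that expression to be $\eta$-rooted. Tracking this through $n$ brackets and applying $P$ (output in $A$, all inputs in $M$), one sees that every surviving term has exactly one of two shapes: (i) an outermost $\mu_n$ all of whose slots are filled by components $T_{k_1},\dots,T_{k_n}$ with $\sum k_i=p$; or (ii) an outermost $T_{j+1+t}$ one of whose slots holds a subexpression $\eta_n\big(T_{k_1}(\dots),\dots,u,\dots,T_{k_n}(\dots)\big)$ whose unique $M$-slot is fed a free input and whose $n-1$ slots of type $A$ are filled by components of $T$, the remaining $j+t$ slots of the outer $T$ being fed free inputs. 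These are precisely the two sides of \eqref{homo-rota-iden}: any other configuration either leaves an unfilled $A$-slot or has $M$-valued output, hence dies under $P$.

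It then remains to collect coefficients. For fixed $p$ and fixed combinatorial data $(n,(k_i),i,j,t)$ I would count the multiplicity with which $P[\cdots[[\triangle,T],\dots,T]]$ (with exactly $n$ copies of $T$) produces the corresponding term --- for shape (i), the orders in which the $n$ slots of $\mu_n$ are successively filled; for shape (ii), the interleavings of the single ``wrapping'' step with the fillings of the $A$-slots of $\eta_n$ --- then weight by $\tfrac1{n!}$ from the Maurer--Cartan sum and match the Koszul signs produced by the Gerstenhaber bracket against the sign $(-1)^{|u_1|+\cdots+|u_j|}$ in \eqref{homo-rota-iden}. This yields the identities \eqref{homo-rota-iden} for every $p$, and running the computation in reverse gives the converse, so that \eqref{homo-rota-iden} for all $p\ge1$ is equivalent to the Maurer--Cartan equation.

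The one genuine difficulty is this sign-and-multiplicity bookkeeping across the nested brackets. The cleanest way to handle it --- and what I would actually do --- is to realize the graded Lie algebra $L$ as coderivations of the tensor coalgebra on $A\oplus M$, so that $\sum_{n\ge1}\tfrac1{n!}[\cdots[[\triangle,T],\dots,T]]$ assembles into a single closed-form expression (the twist of the coderivation $\triangle$ by $T$); in that description the factorial weights and Koszul signs organize themselves on their own, and extracting the part valued in $\mathrm{Hom}(\overline{T}(M),A)$ reads off \eqref{homo-rota-iden} directly.
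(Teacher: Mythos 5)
Your proposal is correct and follows essentially the same route as the paper: the paper likewise passes to coderivations on $\overline{T}^c(A\oplus M)$, expands the iterated bracket as $\sum_{i=0}^n(-1)^i\binom{n}{i}\,\overline{T}^{\circ i}\circ(\overline{\mu}+\overline{\eta})\circ\overline{T}^{\circ(n-i)}$, observes (as in your placement analysis) that only the $i=0$ and $i=1$ terms survive the projection onto $\mathrm{Hom}(\overline{T}(M),A)$, and matches the resulting coefficients $\tfrac{1}{n!}$ and $\tfrac{n}{n!}=\tfrac{1}{(n-1)!}$ to the two sides of \eqref{homo-rota-iden}.
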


\begin{proof}
We will use the interpretation of the elements of $C^\bullet (A \oplus M, A \oplus M)$ as coderivations on the free tensor coalgebra $\overline{T}^c (A \oplus M)$ and the Gerstenhaber bracket on $C^\bullet (A \oplus M, A \oplus M)$ as the commutator bracket of coderivations on $\overline{T}^c (A \oplus M)$. Let $\overline{\mu}$, $\overline{\eta}$ and $\overline{T}$ denote the coderivations on $\overline{T}^c ( A \oplus M)$ corresponding to the maps $\sum_{k \geq 1} \mu_k$, $\sum_{k \geq 1} \eta_k$ and $\sum_{k \geq 1} T_k$ in $C^\bullet (A \oplus M, A \oplus M).$ Hence $T$ is a Maurer-Cartan element if and only if 
\begin{align}\label{rel-hom-1}
\sum_{n=1}^\infty \frac{1}{n!} ~\overline{P} [ \cdots [[ \overline{\mu}+ \overline{\eta}, \overline{T}], \overline{T}], \ldots, \overline{T}] = 0,
\end{align}
where $\overline{P}$ denotes the projection onto the abelian subalgebra $\bigoplus_{n \in \mathbb{Z}} \mathrm{Hom}_n (\overline{T}(M), A)$. First observe that 
\begin{align}\label{rel-hom-2}
&\overline{P} [ \cdots [[ \overline{\mu}+ \overline{\eta}, \overline{T}], \overline{T}], \ldots, \overline{T}] (u_1, \ldots, u_p)  \nonumber \\
&= \mathrm{pr}_A \sum_{i=0}^n (-1)^i {n \choose i} (\underbrace{\overline{T} \circ \cdots \circ \overline{T}}_{i} \circ (\overline{\mu} + \overline{\eta}) \circ \underbrace{\overline{T} \circ \cdots \circ \overline{T}}_{n-i})(u_1, \ldots, u_p ) \nonumber \\
&= \big( (\mathrm{pr}_A \circ \overline{\mu} \circ \underbrace{\overline{T} \circ \cdots \circ \overline{T}}_{n} )  ~-~ n (\mathrm{pr}_A \circ \overline{T} \circ \overline{\eta} \circ \underbrace{\overline{T} \circ \cdots \circ \overline{T}}_{n-1}) \big)(u_1, \ldots, u_p ) \qquad
(\text{the other terms are zero}).
\end{align}
By a straightforward calculation, we obtain
\begin{align*}
&(\mathrm{pr}_A \circ \overline{\mu} \circ \underbrace{\overline{T} \circ \cdots \circ \overline{T}}_{n} ) (u_1, \ldots, u_p ) \\
&= \sum_{k_1 + \cdots + k_n = p} ~\mu_n \big( T_{k_1} (u_1, \ldots, u_{k_1}), T_{k_2} (u_{k_1 + 1}, \ldots, u_{k_1 + k_2}), \ldots, T_{k_n } (u_{k_1 + \cdots + k_{n-1} +1} , \ldots, u_{p})   \big),
\end{align*}
and
\begin{align*}
&n (\mathrm{pr}_A \circ \overline{T} \circ \overline{\eta} \circ \underbrace{\overline{T} \circ \cdots \circ \overline{T}}_{n-1})  (u_1, \ldots, u_p ) \\
&= n \sum_{i=1}^n \sum_{ \substack{j+k_1+ \cdots + k_{i-1} +1 \\ + k_{i+1} + \cdots + k_n + t = p}} (-1)^{|u_1| + \cdots + |u_j|}  ~ T_{j+1+t} \bigg(   u_1, \ldots, u_j , \eta \big(   T_{k_1} ( u_{j+1} , \ldots u_{j+ k_1} ), \ldots, 
T_{k_{i-1}} ( \cdots , u_{j+ k_1 + \cdots + k_{i-1}} ),\\ 
&  \qquad \qquad \qquad \qquad u_{j + k_1 + \cdots + k_{i-1} +1}, T_{k_{i+1}} (\cdots, u_{j+k_1+ \cdots + k_{i-1} +1 + k_{i+1}}), \ldots, T_{k_n} (\ldots, u_{p-t}) \big), u_{p-t+1}, \ldots, u_p \bigg).
\end{align*}
Therefore, it follows from equations (\ref{rel-hom-1}) and (\ref{rel-hom-2}) that $T$ is a homotopy relative Rota-Baxter operator if and only if the identity (\ref{homo-rota-iden}) holds.
\end{proof}

\begin{remark}
The identities given by equation (\ref{homo-rota-iden}) can be alternatively used as a definition of a homotopy relative Rota-Baxter operator. A homotopy Rota-Baxter operator on an $A_\infty [1]$-algebra $(A, \{ \mu_k \}_{k \geq 1})$ is a homotopy relative Rota-Baxter operator on the adjoint bimodule.
\end{remark}

\begin{defn}
(i) A homotopy relative Rota-Baxter algebra is a triple $((A, \{\mu_k \}_{k \geq 1}), ( M, \{ \gamma_k \}_{k \geq 1}), \{ T_k \}_{k \geq 1} )$ consisting of an $A_\infty[1]$-algebra, a bimodule and a homotopy relative Rota-Baxter operator.

(ii) A homotopy Rota-Baxter algebra is a pair $((A, \{\mu_k \}_{k \geq 1}), \{ T_k \}_{k \geq 1} )$ of an $A_\infty [1]$-algebra and a homotopy Rota-Baxter operator on it.
\end{defn}

\medskip

In \cite{laza-rota}, the authors introduce homotopy relative Rota-Baxter operators on modules over $L_\infty [1]$-algebra. Here we first recall their definition and then we compare with homotopy relative Rota-Baxter operator on bimodules over $A_\infty [1]$-algebras. Let $(W, \{ l_k \}_{k \geq 1} )$ be an $L_\infty [1]$-algebra and $( V, \{ \rho_k \}_{k \geq 1})$ be a representation of it. Then 
\begin{align*}
\big( L = \bigoplus_{ n \in \mathbb{Z}} \mathrm{Hom}_n( \overline{S}( W \oplus V), W \oplus V) , \mathfrak{a} = \bigoplus_{n \in \mathbb{Z}} \mathrm{Hom}_n (\overline{S}(V), W), P, \triangle = \sum_{k \geq 1} (l_k + \rho_k)   \big)
\end{align*}
is a $V$-data, where the graded vector space $L$ is equipped with the standard Nijenhuis-Richardson bracket \cite{nij-ric,laza-rota}. Hence by Theorem \ref{Voro-JPAA1}, the graded space $\mathfrak{a} = \bigoplus_{n \in \mathbb{Z}} \mathrm{Hom}_n (\overline{S}(V), W)$ carries an $L_\infty [1]$-algebra structure which turns out to be weakly filtered. A Maurer-Cartan element $T = \sum_{ k \geq 1} T_k \in \mathrm{Hom}( S(V), W)$ on this $L_\infty [1]$-algebra is said to be a homotopy relative Rota-Baxter operator on the module $( V, \{ \rho_k \}_{k \geq 1})$ over the $L_\infty [1]$-algebra $(W, \{ l_k \}_{k \geq 1} )$. A homotopy relative Rota-Baxter operator $T = \sum_{k \geq 1} T_k$ is said to be strict if $T_k = 0$ for $k \geq 2$. It has been shown in \cite{laza-rota} that a strict homotopy relative Rota-Baxter operator $T = T_1$ induces a $pre\text{-}Lie_\infty [1]$-algebra structure on $V$ given by
\begin{align*}
\theta_k (v_1, \ldots, v_k ) := \rho_k ( Tv_1, \ldots, Tv_{k-1}, v_k ), ~ \text{ for } k \geq 1.
\end{align*}

\medskip

Let $(A, \{ \mu_k \}_{k \geq 1})$ be an $A_\infty [1]$-algebra and $(M, \{ \eta_k \}_{k \geq 1})$ be a bimodule over it. It has been proved in \cite{lada-markl} that the standard symmetrization process yields an $L_\infty [1]$-algebra $(A, \{l_k \}_{k \geq 1})$ and a representation $(M, \{ \rho_k\}_{k \geq 1})$ of it, where
\begin{align}
l_k ( a_1, \ldots, a_k ) :=~& \sum_{\sigma \in S_k }  \epsilon (\sigma ) \mu_k ( a_{\sigma (1)}, \ldots, a_{\sigma (k)}), ~ \text{ for } k \geq 1 \text{ and } a_1, \ldots, a_k \in A, \label{symm-l}\\
\rho_k ( a_1, \ldots, a_k ) :=~& \sum_{\sigma \in S_k }  \epsilon (\sigma ) \eta_k ( a_{\sigma (1)}, \ldots, a_{\sigma (k)}), ~ \text{ for } k \geq 1 \text{ and } a_1, \ldots, a_{k-1} \in A, a_k \in M. \label{symm-mod}
\end{align}
Moreover, the above symmetrization process yields a morphism of graded Lie algebras
\begin{align*}
\Psi : \bigoplus_{n \in \mathbb{Z}} \mathrm{Hom}_n ( \overline{T}(A \oplus M), A \oplus M)  \longrightarrow  \bigoplus_{n \in \mathbb{Z}} \mathrm{Hom}_n ( \overline{S}(A \oplus M), A \oplus M).
\end{align*}
As a consequence, we get the following.
\begin{prop}
With the above notations, if $T$ is a homotopy relative Rota-Baxter operator on $(M, \{\eta_k\}_{k\geq 1})$ over the $A_\infty[1]$-algebra $(A, \{ \mu_k \}_{k \geq 1})$, then $\Psi (T)$ is a homotopy relative Rota-Baxter operator on the module $(M, \{ \rho_k\}_{k \geq 1})$ over the $L_\infty [1]$-algebra $(A, \{ l_k \}_{k \geq 1})$.
\end{prop}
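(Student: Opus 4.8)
The plan is to show that the symmetrization morphism $\Psi$ is compatible with the two $V$-data at hand, so that it restricts to a \emph{strict} morphism between the associated derived-bracket $L_\infty[1]$-algebras on the $\mathfrak{a}$-part; since a strict morphism of weakly filtered $L_\infty[1]$-algebras sends Maurer-Cartan elements to Maurer-Cartan elements, and a homotopy relative Rota-Baxter operator is by definition such a Maurer-Cartan element, the claim follows.

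Concretely, write $(L^{\mathrm{as}},\mathfrak{a}^{\mathrm{as}},P^{\mathrm{as}},\triangle^{\mathrm{as}})$ for the $V$-data with $L^{\mathrm{as}}=\bigoplus_{n\in\mathbb{Z}}\mathrm{Hom}_n(\overline{T}(A\oplus M),A\oplus M)$, $\mathfrak{a}^{\mathrm{as}}=\bigoplus_{n}\mathrm{Hom}_n(\overline{T}(M),A)$ and $\triangle^{\mathrm{as}}=\sum_{k\geq 1}(\mu_k+\eta_k)$, and $(L^{\mathrm{Lie}},\mathfrak{a}^{\mathrm{Lie}},P^{\mathrm{Lie}},\triangle^{\mathrm{Lie}})$ for the analogous data with $\overline{T}$ replaced by $\overline{S}$ and $\triangle^{\mathrm{Lie}}=\sum_{k\geq 1}(l_k+\rho_k)$. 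By the defining formulas \eqref{symm-l}--\eqref{symm-mod}, $\Psi(\triangle^{\mathrm{as}})=\triangle^{\mathrm{Lie}}$. I would then verify three compatibilities: (i) $\Psi$ respects the decomposition of $C^\bullet$ by the number of inputs taken from $A$, the number taken from $M$, and the target component ($A$ or $M$) --- symmetrization changes none of these --- so that $\Psi(\mathfrak{a}^{\mathrm{as}})\subseteq\mathfrak{a}^{\mathrm{Lie}}$, $\Psi(\ker P^{\mathrm{as}})\subseteq\ker P^{\mathrm{Lie}}$, and hence $P^{\mathrm{Lie}}\circ\Psi=\Psi\circ P^{\mathrm{as}}$; (ii) $\Psi$ preserves arity, hence maps the weakly filtered (arity) filtration on $\mathfrak{a}^{\mathrm{as}}$ into the one on $\mathfrak{a}^{\mathrm{Lie}}$, so it commutes with the convergent sums below; (iii) $\Psi$ is a morphism of graded Lie algebras, which is the fact recalled just before the proposition.

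Granting (i)--(iii), for homogeneous $a_1,\dots,a_k\in\mathfrak{a}^{\mathrm{as}}$ we get
\[
\Psi\big(P^{\mathrm{as}}[\cdots[[\triangle^{\mathrm{as}},a_1],a_2],\dots,a_k]\big)=P^{\mathrm{Lie}}[\cdots[[\triangle^{\mathrm{Lie}},\Psi a_1],\Psi a_2],\dots,\Psi a_k],
\]
so $\Psi$ intertwines the Voronov brackets $l_k$ of Theorem \ref{Voro-JPAA1} on $\mathfrak{a}^{\mathrm{as}}$ and on $\mathfrak{a}^{\mathrm{Lie}}$; being in addition linear and degree-preserving, $\Psi|_{\mathfrak{a}^{\mathrm{as}}}\colon\mathfrak{a}^{\mathrm{as}}\to\mathfrak{a}^{\mathrm{Lie}}$ is a strict morphism of $L_\infty[1]$-algebras. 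Therefore, if $T=\sum_{k\geq 1}T_k\in\mathrm{Hom}_0(\overline{T}(M),A)$ is a homotopy relative Rota-Baxter operator, i.e.\ $\sum_{k\geq 1}\frac{1}{k!}\,l_k^{\mathrm{as}}(T,\dots,T)=0$, then applying $\Psi$ and using (ii) to pass it through the sum yields
\[
\sum_{k\geq 1}\frac{1}{k!}\,l_k^{\mathrm{Lie}}(\Psi T,\dots,\Psi T)=\Psi\Big(\sum_{k\geq 1}\frac{1}{k!}\,l_k^{\mathrm{as}}(T,\dots,T)\Big)=0,
\]
and since $\Psi T\in\mathrm{Hom}_0(\overline{S}(M),A)$ has degree $0$, $\Psi(T)$ is a Maurer-Cartan element, that is, a homotopy relative Rota-Baxter operator on $(M,\{\rho_k\}_{k\geq 1})$ over the $L_\infty[1]$-algebra $(A,\{l_k\}_{k\geq 1})$.

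The only step that requires genuine care is (i): one must check that symmetrization preserves the decomposition of $C^\bullet(A\oplus M,A\oplus M)$ by bidegree and target component (the $\overline{S}$-analogue of the decomposition used in Section \ref{sec-3}), so that $\Psi$ is compatible with the projections onto the abelian subalgebras $\mathfrak{a}$. Everything else is then a formal consequence of $\Psi$ being a graded Lie algebra morphism together with the weakly filtered structure on both derived-bracket $L_\infty[1]$-algebras, which guarantees that the infinite sums defining Maurer-Cartan elements converge.
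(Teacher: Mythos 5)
Your proposal is correct and follows exactly the route the paper intends: the paper states this proposition as an immediate consequence of $\Psi$ being a morphism of graded Lie algebras (with no written proof), and your argument simply makes explicit the compatibility of $\Psi$ with the two $V$-data (preservation of $\mathfrak{a}$, of $\ker P$, of arity, and the identity $\Psi(\triangle^{\mathrm{as}})=\triangle^{\mathrm{Lie}}$) so that the derived brackets are intertwined and Maurer--Cartan elements are carried to Maurer--Cartan elements. The details you supply, in particular the check that symmetrization respects the decomposition by which inputs come from $M$ and by target component, are exactly the ones the paper leaves implicit.
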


\subsection{Homotopy dendriform and homotopy pre-Lie algebras}
Dendriform algebras were introduced by Loday \cite{loday} as a Koszul dual of associative dialgebras. The notion of homotopy dendriform algebras ($Dend_\infty$-algebras) was defined in \cite{lod-val-book} and explicitly described in \cite{das1}. Here, we recall the definition of a $Dend_\infty[1]$-algebra from \cite{das1} in a slightly different way using Maurer-Cartan elements in a graded Lie algebra.

Let $C_k$ be the set of first $n$ natural numbers. For convenience, we denote the elements of $C_k$ by $C_k = \{ [1], [2], \ldots, [k] \}$. There are some distinguished maps $R_0 (k; \overbrace{1, \ldots, \underbrace{l}_{i\text{-th}}, \ldots, 1}^k) : C_{k+l-1} \rightarrow C_k$ and $R_i (k; \overbrace{1, \ldots, \underbrace{l}_{i\text{-th}}, \ldots, 1}^k) : C_{k+l-1} \rightarrow \mathbb{K}[C_l]$ given by

\begin{center}
\begin{tabular}{|c|c|c|c|}
\hline
$[r] \in C_{k+l-1}$ & $1 \leq r \leq i-1$ & $i \leq r \leq i+l-1$ & $i+l \leq r \leq k+l-1$\\ \hline \hline
$R_0 (k; 1, \ldots, l, \ldots, 1)[r]$ & $[r]$ & $[i]$ & $[r-l+1]$\\ \hline
$ R_i (k; 1, \ldots, l, \ldots, 1)[r]$ & $[1] + \cdots + [l]$  & $[r-i+1]$ & $[1] + \cdots + [l]$ \\ \hline
\end{tabular}
\end{center}

\medskip

\medskip

Let $D = \bigoplus_{i \in \mathbb{Z}} D_i$ be a graded vector space. Suppose $\mu_k : TD \otimes D \otimes TD \rightarrow D$ is a map which is nonzero only on $\bigoplus_{i+j = k+1, i, j \geq 1} D^{\otimes i-1} \otimes D \otimes D^{\otimes j-1}$. Then $\mu_k$ induces $k$ many maps $\mu_{k, [1]}, \ldots, \mu_{k, [k]} : D^{\otimes k } \rightarrow D$ by
\begin{align*}
\mu_{k, [1]}( a_1, \ldots, a_k) =&  \mu_k ( 1 \otimes a_1 \otimes a_1 \cdots a_k), \quad
\mu_{k, [2]} (a_1, \ldots, a_k) = \mu_k ( a_1 \otimes a_2 \otimes a_3 \cdots a_k), ~ \ldots \\
&\mu_{k, [k]} (a_1, \ldots, a_k) = \mu_k (a_1 \cdots a_{k-1} \otimes a_k \otimes 1).
\end{align*}
Consider the graded space $\bigoplus_{n \in \mathbb{Z}} \mathrm{Hom}_n ( TD \otimes D \otimes TD , D)$ of such maps. It carries a graded Lie bracket defined by $[\mu_k , \eta_l] = \mu_k \circ \eta_l - (-1)^{mn} \eta_l \circ \mu_k$, where
\begin{align*}
&(\mu_k \circ \eta_l ) ( a_1 \cdots a_{r-1} \otimes a_r \otimes a_{r+1} \cdots a_{k+l-1}) \\
&= \sum_{i =1}^k (-1)^{|a_1|+ \cdots + |a_{i -1}|}~ \mu_{k , R_0 (k; 1, \ldots, l,\ldots, 1 )[r]} ( a_1, \ldots, a_{i-1}, \mu_{l , R_i (k; 1, \ldots, l,\ldots, 1 )[r]} (a_i, \ldots, a_{i+l-1}), \ldots, a_{k+l-1}),
\end{align*}
for $\mu = \sum_{k \geq 1} \mu_k \in \mathrm{Hom}_m (TD \otimes D \otimes TD, D)$ and $\eta = \sum_{l \geq 1} \eta_l \in \mathrm{Hom}_n (TD \otimes D \otimes TD, D).$
A $Dend_\infty [1]$-algebra is a graded vector space $D = \bigoplus_{i \in \mathbb{Z}} D_i$ together with a Maurer-Cartan element $\mu = \sum_{k \geq 1} \mu_k$ of the graded Lie algebra $(\bigoplus_{n \in \mathbb{Z}} \mathrm{Hom}_n ( TD \otimes D \otimes TD, D), [~,~])$. Note that each $\mu_k$ determines $k$ many degree $1$ maps $D^{\otimes k} \rightarrow D$. Therefore, for now onward, we will denote a $Dend_\infty$-algebra by $(D, \{ \mu_{k, [r] } \}_{k \geq 1, [r] \in C_k})$. If $(D, \{ \mu_{k, [r] } \}_{k \geq 1, [r] \in C_k} )$ is a $Dend_\infty [1]$-algebra, then it has been proved in \cite{das1,lod-val-book} that $(D, \{\mu_k \}_{k \geq 1})$ is an $A_\infty [1]$-algebra, where
\begin{align}\label{dend-sum-a}
\mu_k = \mu_{k, [1]} + \cdots + \mu_{k, [k]}, ~ \text{ for } k \geq 1.
\end{align}

The notion of $pre\text{-}Lie_\infty [1]$-algebras are defined by Chapoton and Livernet \cite{chap-mur}. Here we will not recall the definition rather mention that there is a graded Lie bracket $[~,~]_{\text{MN}}$, known as {\em Matsushima-Nijenhuis bracket} on $\bigoplus_{n \in \mathbb{Z}} \mathrm{Hom}_n ( S(W) \otimes W, W)$, for any graded vector space $W$. See \cite{chap-mur,laza-rota} for details. A $pre\text{-}Lie_\infty [1]$-algebra structure on a graded vector space $W$ is by definition a Maurer-Cartan element in the graded Lie algebra $\big( \bigoplus_{n \in \mathbb{Z}} \mathrm{Hom}_n ( S(W) \otimes W, W), [~,~]_{\text{MN}} \big).$

A $pre\text{-}Lie_\infty[1]$-algebra induces an $L_\infty [1]$-algebra structure on $W$ with structure maps given by
\begin{align*}
l_k (x_1, \ldots, x_k ) = \sum_{i=1}^k (-1)^{|x_i| (|x_{i+1}| + \cdots + |x_k|)} ~ \theta_k (x_1, \ldots, \widehat{x_i}, \ldots, x_k, x_i ), ~ \text{ for } k \geq 1.
\end{align*}
This is called the subadjacent $L_\infty [1]$-algebra of the $pre\text{-}Lie_\infty[1]$-algebra.

It is known that a dendriform algebra gives rise to a pre-Lie algebra structure. Here we prove a homotopy version of this result.

\begin{thm}\label{last-thm}
Let $(D, \{ \mu_{k, [r]} \}_{k \geq 1, [r] \in C_k} )$ be a $Dend_\infty [1]$-algebra. Then $(D, \{ \theta_k \}_{k \geq 1})$  is a $pre\text{-}Lie_\infty[1]$-algebra, where
\begin{align}\label{dend-pre-map}
\theta_k ( a_1, \ldots, a_k ) = \sum_{\sigma \in S_k} \epsilon (\sigma) \mu_{k, [ \sigma^{-1} (k)]} ( a_{\sigma (1)}, \ldots, a_{\sigma (k)}), ~ \text{ for } k \geq 1.
\end{align}
\end{thm}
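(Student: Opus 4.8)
The plan is to realize the statement as the pushforward of a Maurer--Cartan element along a morphism of graded Lie algebras, exactly parallel to the way the $A_\infty[1]$-to-$L_\infty[1]$ symmetrization \eqref{symm-l}--\eqref{symm-mod} is organized. Recall that a $Dend_\infty[1]$-structure on $D$ is a Maurer--Cartan element $\mu = \sum_{k\geq 1}\mu_k$ in the graded Lie algebra $\mathfrak{g}_D := \big(\bigoplus_{n\in\mathbb{Z}}\mathrm{Hom}_n(TD\otimes D\otimes TD, D),\ [~,~]\big)$ described above, whereas a $pre\text{-}Lie_\infty[1]$-structure is a Maurer--Cartan element in $\mathfrak{g}_W := \big(\bigoplus_{n\in\mathbb{Z}}\mathrm{Hom}_n(S(D)\otimes D, D),\ [~,~]_{\text{MN}}\big)$. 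So it suffices to produce a morphism of graded Lie algebras $\Phi : \mathfrak{g}_D \to \mathfrak{g}_W$ which sends $\mu$ to $\sum_{k\geq 1}\theta_k$.

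First I would define $\Phi$ on an element $f = \sum_k f_k$ (each $f_k$ encoding the $k$ maps $f_{k,[r]}:D^{\otimes k}\to D$) by $(\Phi f)_k(a_1,\ldots,a_k) = \sum_{\sigma\in S_k}\epsilon(\sigma)\, f_{k,[\sigma^{-1}(k)]}(a_{\sigma(1)},\ldots,a_{\sigma(k)})$; note that $\sigma^{-1}(k)$ is precisely the slot occupied by $a_k$ in the reordered tuple, so $\Phi$ is the symmetrization that marks the last argument as the distinguished one, and by construction $\Phi(\mu) = \sum_k\theta_k$. I would then check that $\Phi$ lands in $\mathfrak{g}_W$: the image of $f_k$ is graded symmetric in its first $k-1$ arguments, which follows because replacing $\sigma$ by $\sigma\circ\tau$ for a transposition $\tau$ among those slots only permutes the index set of the sum while producing the matching Koszul sign.

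The core step, and the one I expect to be the main obstacle, is verifying $\Phi([f,g]) = [\Phi f, \Phi g]_{\text{MN}}$. One expands $[f,g] = f\circ g - (-1)^{mn} g\circ f$ using the Dend composition formula with the bookkeeping maps $R_0(k;1,\ldots,l,\ldots,1)$ and $R_i(k;1,\ldots,l,\ldots,1)$, applies $\Phi$, and reorganizes the resulting sum over $S_{k+l-1}$ into a sum over (a choice of which $l$ of the $k+l-1$ inputs are fed to $g$) $\times$ ($S_l$-symmetrization inside $g$) $\times$ ($S_k$-symmetrization outside), which is exactly the shape of the Matsushima--Nijenhuis bracket on $\mathrm{Hom}(S(D)\otimes D, D)$. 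The delicate points are: (i) tracking where the globally distinguished input (the last argument) goes --- it is either inside the inserted block, in which case the $R_i$-rule forces the inner composite to occupy the distinguished slot, or outside it, in which case $R_0$ routes it to the distinguished slot of the outer map --- and seeing that this dichotomy is precisely the two families of terms in $[~,~]_{\text{MN}}$; and (ii) bookkeeping the Koszul signs, where the signs $(-1)^{|a_1|+\cdots+|a_{i-1}|}$ in the Dend composition must combine with the shuffle signs $\epsilon(\sigma)$ to reproduce the signs in the MN bracket. Both are finite, if tedious, checks; they generalize verbatim the classical computation that a dendriform algebra yields a pre-Lie algebra \cite{aguiar-pre}.

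Finally, with $\Phi$ a morphism of graded Lie algebras, $[\mu,\mu]=0$ immediately gives $[\Phi\mu,\Phi\mu]_{\text{MN}}=0$, i.e. $\sum_k\theta_k$ is a Maurer--Cartan element in $\mathfrak{g}_W$, which by definition means that $(D,\{\theta_k\}_{k\geq 1})$ is a $pre\text{-}Lie_\infty[1]$-algebra. As a consistency check one can compare the induced subadjacent $L_\infty[1]$-algebra of $(D,\{\theta_k\})$ with the $L_\infty[1]$-algebra obtained by symmetrizing the $A_\infty[1]$-algebra $(D,\{\mu_k\}_{k\geq 1})$ of \eqref{dend-sum-a}; they agree, in accordance with the commuting square \eqref{d1}.
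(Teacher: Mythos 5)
Your proposal is correct and follows essentially the same route as the paper: the paper also defines the symmetrization map $\Psi(\mu_k)(a_1,\ldots,a_k)=\sum_{\sigma\in S_k}\epsilon(\sigma)\mu_{k,[\sigma^{-1}(k)]}(a_{\sigma(1)},\ldots,a_{\sigma(k)})$, asserts (calling it "straightforward but tedious") that it is a morphism of graded Lie algebras into $\big(\bigoplus_n \mathrm{Hom}_n(\overline{S}(D)\otimes D,D),[~,~]_{\mathrm{MN}}\big)$, and concludes by transporting the Maurer--Cartan element. Your sketch of how the $R_0$/$R_i$ bookkeeping reorganizes into the two families of Matsushima--Nijenhuis terms is in fact more detailed than what the paper records.
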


\begin{proof}
We define a map
\begin{align*}
\Psi : \bigoplus_{n \in \mathbb{Z}} \mathrm{Hom}_{n} ( TD \otimes D \otimes TD, D) &\longrightarrow \bigoplus_{n \in \mathbb{Z}} \mathrm{Hom}_{n} (\overline{S}(D) \otimes D , D) ~~~ \text{ by }\\
\Psi ( \mu_k ) (a_1, \ldots, a_k) &= \sum_{\sigma \in S_k} \epsilon (\sigma) \mu_{k, [ \sigma^{-1} (k)]} ( a_{\sigma (1)}, \ldots, a_{\sigma (k)}).
\end{align*}
It is a straightforward but tedious calculation to check that $\Psi$ preserves the graded Lie algebra structures. Hence the result follows from the definition of $pre\text{-}Lie_\infty[1]$-algebra structure by Maurer-Cartan element.
\end{proof}

The above construction is functorial with respect to strict morphisms in the categories of $Dend_\infty [1]$-algebras and $pre\text{-}Lie_\infty[1]$-algebras.

\begin{prop} With the above construction, the following diagram commutes
\begin{align}
\xymatrix{
 Dend_\infty[1] \ar@{~>}[rr] \ar@{~>}[d]& & pre\text{-}Lie_\infty [1]  \ar@{~>}[d]\\
 A_\infty [1] \ar@{~>}[rr]& & L_\infty [1].
}
\end{align}
\end{prop}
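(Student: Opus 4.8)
The plan is to unwind both legs of the square on a fixed $Dend_\infty[1]$-algebra $(D,\{\mu_{k,[r]}\}_{k\geq 1,\,[r]\in C_k})$ and to verify that they produce literally the same $L_\infty[1]$-structure. Composing the vertical and then the horizontal arrow, equation \eqref{dend-sum-a} gives the $A_\infty[1]$-algebra $(D,\{\mu_k\}_{k\geq 1})$ with $\mu_k=\mu_{k,[1]}+\cdots+\mu_{k,[k]}$, and the symmetrization \eqref{symm-l} then gives
\[ l_k^{\mathrm{down}}(a_1,\dots,a_k)=\sum_{\sigma\in S_k}\epsilon(\sigma)\,\mu_k(a_{\sigma(1)},\dots,a_{\sigma(k)})=\sum_{\sigma\in S_k}\sum_{r=1}^{k}\epsilon(\sigma)\,\mu_{k,[r]}(a_{\sigma(1)},\dots,a_{\sigma(k)}). \]
Composing the horizontal and then the vertical arrow, Theorem \ref{last-thm} gives the $pre\text{-}Lie_\infty[1]$-algebra $(D,\{\theta_k\}_{k\geq 1})$ of \eqref{dend-pre-map}, and passing to its subadjacent $L_\infty[1]$-algebra gives
\[ l_k^{\mathrm{right}}(a_1,\dots,a_k)=\sum_{i=1}^{k}(-1)^{|a_i|(|a_{i+1}|+\cdots+|a_k|)}\,\theta_k(a_1,\dots,\widehat{a_i},\dots,a_k,a_i). \]
It therefore suffices to prove $l_k^{\mathrm{down}}=l_k^{\mathrm{right}}$ for every $k\geq 1$; since compatibility with strict morphisms has already been recorded for all four arrows, this upgrades to commutativity as a diagram of functors.

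To compare the two, I would substitute \eqref{dend-pre-map} into $l_k^{\mathrm{right}}$. For each $i$ let $\rho_i\in S_k$ be the permutation that moves the $i$-th slot to the last position while fixing $1,\dots,i-1$, so that $(a_1,\dots,\widehat{a_i},\dots,a_k,a_i)$ is the $\rho_i$-reordering of $(a_1,\dots,a_k)$; its Koszul sign is exactly the prefactor, $\epsilon(\rho_i)=(-1)^{|a_i|(|a_{i+1}|+\cdots+|a_k|)}$. Expanding $\theta_k$ on this reordered tuple, its $\sigma$-permuted arguments become $a_{(\rho_i\sigma)(1)},\dots,a_{(\rho_i\sigma)(k)}$; the multiplicativity $\epsilon(\rho_i\sigma;a_1,\dots,a_k)=\epsilon(\rho_i)\,\epsilon(\sigma;a_{\rho_i(1)},\dots,a_{\rho_i(k)})$ of the Koszul sign merges the two signs into $\epsilon(\pi)$ with $\pi:=\rho_i\sigma$, and the distinguished dendriform slot $[\sigma^{-1}(k)]$ of $\theta_k$ turns into $[(\rho_i\sigma)^{-1}(i)]=[\pi^{-1}(i)]$ because the last argument is now $a_i$. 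Since $\sigma\mapsto\rho_i\sigma$ is a bijection of $S_k$ for each fixed $i$, we obtain $l_k^{\mathrm{right}}(a_1,\dots,a_k)=\sum_{i=1}^{k}\sum_{\pi\in S_k}\epsilon(\pi)\,\mu_{k,[\pi^{-1}(i)]}(a_{\pi(1)},\dots,a_{\pi(k)})$. Interchanging the two sums and using that $i\mapsto\pi^{-1}(i)$ permutes $\{1,\dots,k\}$, the inner sum over $i$ collapses to $\sum_{r=1}^{k}\mu_{k,[r]}=\mu_k$, so the right-hand side becomes $\sum_{\pi\in S_k}\epsilon(\pi)\,\mu_k(a_{\pi(1)},\dots,a_{\pi(k)})=l_k^{\mathrm{down}}$, as desired.

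The one genuinely delicate part is the sign-and-label bookkeeping just indicated: getting the order of the two permutations $\rho_i$ and $\sigma$ right, invoking multiplicativity of the Koszul sign with the correct arguments, and checking that the dendriform slot $\sigma^{-1}(k)$ is carried to $\pi^{-1}(i)$; everything else is a reindexing of finite sums. A tidy way to organize this is to observe, as in the proof of Theorem \ref{last-thm}, that the horizontal arrows are induced by the graded Lie algebra morphism $\Psi$ of that proof together with the symmetrization morphism $\bigoplus_n\mathrm{Hom}_n(\overline{T}(D),D)\to\bigoplus_n\mathrm{Hom}_n(\overline{S}(D),D)$, that the vertical arrows are induced by summing the dendriform slots, and then to check commutativity of this square of graded Lie algebra morphisms before restricting to Maurer-Cartan elements; the computation above is precisely that verification applied to $\mu=\sum_k\mu_k$.
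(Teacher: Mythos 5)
Your proof is correct and follows essentially the same route as the paper's: expand both composite structures, reindex the permutation sum by composing with the cycle that moves the $i$-th slot to the end, merge the signs via multiplicativity of the Koszul sign, and collapse the sum over the dendriform slot index to recover $\mu_k=\sum_r\mu_{k,[r]}$. If anything, your bookkeeping of the slot label (arriving at $[\pi^{-1}(i)]$) is cleaner than the paper's displayed intermediate expression.
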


\begin{proof}
Let $(D, \{ \mu_{k, [r]} \}_{k \geq 1, [r] \in C_k})$ be a $Dend_\infty$-algebra with the corresponding $A_\infty [1]$-algebra $(D, \{ \mu_k \}_{k \geq 1})$ where $\mu_k$'s are given by equation (\ref{dend-sum-a}). Therefore, the corresponding $L_\infty [1]$-algebra structure on $D$ is given by 
\begin{align}\label{l-inf-1}
l_k (a_1, \ldots, a_k ) =~& \sum_{\sigma \in S_k} \epsilon (\sigma) ~\mu_{k} ( a_{\sigma (1)}, \ldots, a_{\sigma (k)}) \nonumber \\
=~& \sum_{\sigma \in S_k} \epsilon (\sigma) \sum_{i=1}^k \mu_{k, [i]} ( a_{\sigma (1)}, \ldots, a_{\sigma (k)}).
\end{align}
On the other hand, consider the $pre\text{-}Lie_\infty [1]$-algebra $(D, \{ \theta_k \}_{k \geq 1})$ where $\theta_k$'s are given in equation \eqref{dend-pre-map}. Hence, the subadjacent $L_\infty [1]$-algebra on $D$ has structure maps
\begin{align}\label{l-inf-2}
l_k' (a_1, \ldots, a_k ) =~& \sum_{i=1}^k (-1)^{|a_i| (|a_{i+1}| + \cdots + |a_k|)}~ \theta_k (a_1, \ldots, \widehat{ a_i}, \ldots, a_k, a_i ) \nonumber \\
=~& \sum_{i=1}^k (-1)^{|a_i| (|a_{i+1}| + \cdots + |a_k|)}~ \sum_{\sigma \in S_k} \epsilon ( \sigma \tau)~ \mu_{k, [\tau^{-1} \sigma^{-1} (k)]} (a_{\sigma (1)}, \ldots, a_{\sigma (k)}) \nonumber \\
=~& \sum_{i=1}^k \sum_{\sigma \in S_k} \epsilon ( \sigma)~ \mu_{k, [\tau^{-1} \sigma^{-1} (k)]} (a_{\sigma (1)}, \ldots, a_{\sigma (k)}).
\end{align}
In the second equality, given a fixed $i$, we use the permutation $\tau \in S_k$ given by 
\begin{align*}
\tau (j) = j \text{ for } j \leq i-1, \quad \tau (j) = j+1 \text{ for }   i \leq j \leq k-1 \quad \text{ and } \quad \tau (k) = i.
\end{align*}
Finally, the third equality follows as $\epsilon (\tau ) = (-1)^{|a_i| (|a_{i+1}| + \cdots + |a_k|)}$. From equations (\ref{l-inf-1}) and (\ref{l-inf-2}), it follows that $l_k = l_k'$ for $k \geq 1$. Hence the result follows.
\end{proof}

\subsection{Strict homotopy relative Rota-Baxter operators}

A homotopy relative Rota-Baxter operator $T = \sum_{k \geq 1} T_k$ on a bimodule $(M , \{ \eta_k \}_{k \geq 1})$ over an $A_\infty [1]$-algebra is said to be {\em strict} if $T_k = 0$ for $k \geq 2$.
It follows from equation (\ref{homo-rota-iden}) that a degree $0$ linear map $T: M \rightarrow A$ is a strict homotopy relative Rota-Baxter operator if $T$ satisfies
\begin{align*}
\mu_k ( Tu_1, \ldots, Tu_k) = \sum_{r=1}^k ~ T( \eta_k ( Tu_1, \ldots, u_r, \ldots, Tu_k )), ~ \text{ for } k \geq 1.
\end{align*}
Strict homotopy relative Rota-Baxter operators are considered in \cite{das1} and the following result is proved in the same reference.
\begin{prop}
Let $(A, \{ \mu_k \}_{k \geq 1})$ be an $A_\infty [1]$-algebra and $(M, \{ \eta_k \}_{k \geq 1})$ be a bimodule. If $T$ is a strict homotopy relative Rota-Baxter operator, then $M$ carries a $Dend_\infty [1]$-algebra structure, where
\begin{align}\label{rota-dend-inf-brk}
\mu_{k, [r]} ( u_1, \ldots, u_k ) = \eta_k (  Tu_1, \ldots, u_r, \ldots, Tu_k ), ~ \text{ for } k \geq 1 \text{ and } [r] \in C_k.
\end{align}
\end{prop}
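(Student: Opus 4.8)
The plan is to show that $\mu:=\sum_{k\ge 1}\mu_k$, where $\mu_k\colon TM\otimes M\otimes TM\to M$ is the operation whose $k$ components $\mu_{k,[1]},\dots,\mu_{k,[k]}$ are prescribed by the displayed formula, is a Maurer-Cartan element of the graded Lie algebra $\big(\bigoplus_n\mathrm{Hom}_n(TM\otimes M\otimes TM,M),[~,~]\big)$; by the Maurer-Cartan description of $Dend_\infty[1]$-algebras this is exactly the assertion. Hence it suffices to prove $[\mu,\mu]=0$, and the real content is that this identity is forced by three inputs: the bimodule relations for $\{\eta_k\}$, the $A_\infty [1]$-relations for $\{\mu_k\}$ on $A$, and the strict Rota-Baxter identities $\mu_k(Tu_1,\dots,Tu_k)=\sum_{r=1}^{k}T\big(\eta_k(Tu_1,\dots,u_r,\dots,Tu_k)\big)$.

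First I would expand $[\mu,\mu]$ (equivalently $\mu\circ\mu$) using the maps $R_0(k;1,\dots,l,\dots,1)$ and $R_i(k;1,\dots,l,\dots,1)$ that define the dendriform circle product, and substitute $\mu_{k,[r]}(u_1,\dots,u_k)=\eta_k(Tu_1,\dots,u_r,\dots,Tu_k)$. Reading off the table for $R_0,R_i$, the resulting terms split according to whether the distinguished slot $[r]$ of a composite lies inside the inserted inner operation. If it does, $R_i$ picks out the single component $\mu_{l,[r-i+1]}$, realized as $\eta_l(Tu_i,\dots,u_r,\dots,Tu_{i+l-1})$ placed in the untwisted (distinguished) slot of the outer $\eta_k$, giving a nested term $\eta_k(Tu_\bullet,\dots,\eta_l(\cdots u_r\cdots),\dots,Tu_\bullet)$. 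If it does not, $R_i$ picks out the total sum $[1]+\dots+[l]=\sum_s\mu_{l,[s]}$, whose output necessarily lands in a twisted slot of the outer $\eta_k$ and hence appears as $T\big(\sum_s\eta_l(Tu_\bullet,\dots,u_s,\dots,Tu_\bullet)\big)$; here, and only here, I would invoke the strict Rota-Baxter identity to replace this by $\mu_l(Tu_\bullet,\dots,Tu_\bullet)$, so that the term becomes $\eta_k(Tu_\bullet,\dots,\mu_l(Tu_\bullet,\dots,Tu_\bullet),\dots,Tu_\bullet)$.

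Next I would match the resulting expression against a known vanishing relation. Specializing the bimodule version of \eqref{a-inf-iden} --- the identity with exactly one argument from $M$ and the operations along the $M$-path replaced by the corresponding $\eta$'s --- at the arguments $(Tu_1,\dots,Tu_{\lambda-1},u_\lambda,Tu_{\lambda+1},\dots,Tu_n)$ produces precisely the alternating sum of the two types of terms found above; the signs match as well, since $T$ has degree $0$, so $|Tu_t|=|u_t|$ and the prefactor $(-1)^{|a_1|+\cdots+|a_{\lambda-1}|}$ of \eqref{a-inf-iden} agrees with the analogous Koszul prefactor in the dendriform circle product. Since those bimodule relations hold by hypothesis, the total sum vanishes, i.e. $[\mu,\mu]=0$, and $(M,\{\mu_{k,[r]}\})$ is a $Dend_\infty[1]$-algebra. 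Conceptually, the mechanism is simply that specialization along $T$ carries the bimodule structure equations for $\eta$ onto the dendriform Maurer-Cartan equations for $\{\mu_{k,[r]}\}$, the strict Rota-Baxter condition being exactly what makes the total-sum terms match; this is the verification performed in \cite{das1}.

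The main obstacle is entirely the bookkeeping of this expansion: one has to line up the three regimes of the $R_0/R_i$ table with the ways an inner composite can feed into the twisted versus the untwisted slot of the outer $\eta_k$, keep the Koszul signs synchronized with those of \eqref{a-inf-iden}, and confirm that the strict Rota-Baxter identity is used exactly --- and only --- in the total-sum regime. Once this dictionary is in place, the vanishing is an immediate consequence of the bimodule relations, requiring no further ideas.
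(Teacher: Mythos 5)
Your outline is correct: the paper itself gives no proof of this proposition (it defers entirely to \cite{das1}), and the Maurer--Cartan verification you describe --- expanding $[\mu,\mu]$ via the $R_0/R_i$ table, identifying the distinguished-slot regime with nested terms $\eta_k(Tu_\bullet,\dots,\eta_l(\cdots u_r\cdots),\dots,Tu_\bullet)$ and the total-sum regime with $T$-images that the strict Rota--Baxter identity converts into $\mu_l(Tu_\bullet,\dots,Tu_\bullet)$, then matching the whole expression against the bimodule relations specialized at $(Tu_1,\dots,u_r,\dots,Tu_n)$ --- is exactly the intended argument. The sign bookkeeping closes as you say because $T$ has degree $0$, so the only remaining work is the combinatorial expansion you have already laid out.
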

The $Dend_\infty [1]$-algebra constructed in the above proposition is called the subadjacent $Dend_\infty [1]$-algebra associated to the strict homotopy relative Rota-Baxter operator $T$.

\begin{prop}
The following diagram commutes
\begin{align}
\xymatrix{
 \mathrm{strict~ homotopy~ relative~ Rota-Baxter ~algebra} \ar@{~>}[rr] \ar@{~>}[d]& & Dend_\infty [1]  \ar@{~>}[d]\\
 \mathrm{strict~ homotopy~ relative~ Rota-Baxter~ Lie~ algebra} \ar@{~>}[rr]& & pre\text{-}Lie_\infty [1].
}
\end{align}
\end{prop}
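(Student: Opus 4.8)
The plan is to unwind the two composites of the square on objects, observe that they produce \emph{literally the same} $pre\text{-}Lie_\infty[1]$-structure on the underlying graded vector space $M$, and then note that both composites respect strict morphisms, so that the square commutes. Fix a strict homotopy relative Rota-Baxter operator $T : M \to A$ on a bimodule $(M,\{\eta_k\}_{k\ge1})$ over an $A_\infty[1]$-algebra $(A,\{\mu_k\}_{k\ge1})$, i.e.\ a degree $0$ map with $T_k=0$ for $k\ge2$.

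Going right and then down: the preceding proposition endows $M$ with the subadjacent $Dend_\infty[1]$-structure $\mu_{k,[r]}(u_1,\dots,u_k)=\eta_k(Tu_1,\dots,u_r,\dots,Tu_k)$, where the $r$-th slot of $\eta_k$ receives $u_r$ and every other slot receives the image $Tu_j$; applying Theorem \ref{last-thm} then gives the $pre\text{-}Lie_\infty[1]$-structure
$$\theta_k(v_1,\dots,v_k)=\sum_{\sigma\in S_k}\epsilon(\sigma)\,\mu_{k,[\sigma^{-1}(k)]}(v_{\sigma(1)},\dots,v_{\sigma(k)}),\qquad k\ge1.$$
Going down and then right: the symmetrization of \eqref{symm-l}--\eqref{symm-mod} turns $(A,\{\mu_k\})$ into the $L_\infty[1]$-algebra $(A,\{l_k\})$ and $(M,\{\eta_k\})$ into the representation $(M,\{\rho_k\})$, and the symmetrization of a single-input map leaves it unchanged, so by the comparison proposition $T=T_1$ is again a strict homotopy relative Rota-Baxter operator on $(M,\{\rho_k\})$ over $(A,\{l_k\})$. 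The recalled result of \cite{laza-rota} then gives on $M$ the $pre\text{-}Lie_\infty[1]$-structure $\theta_k'(v_1,\dots,v_k)=\rho_k(Tv_1,\dots,Tv_{k-1},v_k)$.

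It remains to check $\theta_k=\theta_k'$ for every $k$. I would expand $\rho_k(Tv_1,\dots,Tv_{k-1},v_k)$ via \eqref{symm-mod}: for $\sigma\in S_k$ the distinguished module input $v_k$ occupies slot $\sigma^{-1}(k)$ of $\eta_k$ while the remaining slots carry the elements $Tv_{\sigma(j)}$, which is exactly $\mu_{k,[\sigma^{-1}(k)]}(v_{\sigma(1)},\dots,v_{\sigma(k)})$ after substituting the formula for $\mu_{k,[r]}$; and since $T$ has degree $0$ we have $|Tv_i|=|v_i|$, so $\epsilon(\sigma;Tv_1,\dots,Tv_{k-1},v_k)=\epsilon(\sigma;v_1,\dots,v_k)$. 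Hence the two sums agree term by term. Finally, exactly as for the diagrams \eqref{d1} and \eqref{d2} and the preceding proposition, all four arrows are functorial for strict morphisms in a compatible way, so the square commutes.

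The main obstacle I expect is purely bookkeeping in the last step: tracking which slot of $\eta_k$ receives the unique module argument after permuting by $\sigma$ and identifying that slot with the index $[\sigma^{-1}(k)]$ of Theorem \ref{last-thm}, together with the (harmless, because $T$ is even) Koszul-sign comparison. Everything else is a direct invocation of results already established above.
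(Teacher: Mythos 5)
Your proposal is correct and follows essentially the same route as the paper: both composites are unwound to explicit structure maps $\theta_k$ and $\theta_k'$ on $M$, and one observes that the unique module slot lands in position $\sigma^{-1}(k)$ in both expansions, so the sums agree term by term (your explicit remark that $|Tv_i|=|v_i|$ makes the Koszul-sign comparison cleaner than the paper, which leaves it implicit). No gaps.
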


\begin{proof}
Let $T: M \rightarrow A$ be a strict homotopy relative Rota-Baxter operator on the bimodule $(M, \{ \eta_k \}_{k \geq 1})$ over the $A_\infty [1]$-algebra $(A, \{ \mu_k \}_{k \geq 1})$. Let $(M, \{ \mu_{k, [r]} \}_{k \geq 1, [r] \in C_k} )$ denote the corresponding $Dend_\infty [1]$-algebra, where $\mu_{k, [r]}$'s are given by equation (\ref{rota-dend-inf-brk}). Hence by Theorem \ref{last-thm}, the corresponding $pre\text{-}Lie_\infty [1]$-algebra structure on $M$ are given by
\begin{align*}
\theta_k (u_1, \ldots, u_k) =~& \sum_{\sigma \in S_k} \epsilon ( \sigma) \mu_{k, [\sigma^{-1} (k)]} ( u_{\sigma (1)}, \ldots, u_{\sigma (k)}) \\
=~& \sum_{\sigma \in S_k} \epsilon ( \sigma) \eta_{k} ( Tu_{\sigma (1)}, \ldots, u_{\sigma (j)}, \ldots, u_{\sigma (k)}) |_{\sigma (j) =k}.
\end{align*}
On the other hand, by considering $T$ as a strict homotopy relative Rota-Baxter operator on the module $(M, \{ \rho_k \}_{k \geq 1})$ over the $L_\infty [1]$-algebra $(A, \{ l_k \}_{k \geq 1})$ given in equations (\ref{symm-l}) and (\ref{symm-mod}), the $pre\text{-}Lie_\infty [1]$-algebra on $M$ is given by
\begin{align*}
\theta_k' (u_1, \ldots, u_k ) = \rho_k ( Tu_1, \ldots, Tu_{k-1}, u_k )
= \sum_{\sigma \in S_k} \epsilon (\sigma) \eta_k ( Tu_{\sigma (1)}, \ldots, u_{\sigma (j)}, \ldots, Tu_{\sigma (k)} )|_{\sigma (j) = k}.
\end{align*} 
Thus we have $\theta_k = \theta_k'$, for $k \geq 1$ which proves the result.
\end{proof}

\noindent {\em Acknowledgements.} The research of A. Das is supported by the postdoctoral fellowship of Indian Institute of Technology (IIT) Kanpur, and the research of S. K. Mishra is supported by the NBHM postdoctoral fellowship. Both the authors thank their funding institute/organisation for their support.

\end{document}